\newcommand{\Hdef}{\mathfrak{H}} %
\newcommand{\Act}[1]{\mathbb{A} \mathopen{} \left( #1 \right)\mathclose{}}    %
\newcommand{\Gen}{\mathbb{G}}    %
\newcommand{\Reals}{\mathcal{R}}
\newcommand{\Integers}{\mathbb{N}}
\newcommand{\succh}{\mathbb{K}_1}
\newcommand{\succm}{\mathbb{K}_2}
\newcommand{\cS}{\mathcal{S}}    %
\newcommand{\Level}{L}    %
\newcommand{\cB}{\mathcal{B}}      %
\newcommand{\Levelmax}{L_{\rm max}}      %
\newcommand{\Deltamax}{\Delta_{\rm max}}
\newcommand{\defined}{\triangleq}
\newcommand{\minimize}{\operatornamewithlimits{minimize}}
\newcommand{\maximize}{\operatornamewithlimits{maximize}}
\newcommand{\Lset}{\mathbb{L}_{\infty}}
\newcommand{\Uset}{\mathbb{U}_{\infty}}
\newcommand{\lambdaa}{\lambda_a}
\newcommand{\lambdal}{\lambda_{\ell}}
\newcommand{\lambdau}{\lambda_u}
\newcommand{\kappaieg}{\kappa_{i, {\rm eg}}}
\newcommand{\kappabmh}{\kappa_{{\rm H}}}
\newcommand{\kappag}{\kappa_{\rm g}}
\newcommand{\kappafcd}{\kappa_{\rm fcd}}
\newcommand{\partialC}{\partial_{\rm C}}
\newcommand{\Lh}{K_h}
\newcommand{\gammad}{\gamma_{\rm d}}
\newcommand{\gammai}{\gamma_{\rm i}}
\newcommand{\cG}{\mathcal{G}}
\newcommand{\cH}{\mathcal{H}}
\newcommand{\LnF}{K_{\nabla \! F}}
\newcommand{\Lnh}{K_{\nabla \! h}}
\newcommand{\maxhgrad}{K_{\nabla h}}
\newcommand{\LnFi}{K_{\nabla \! F_i}}
\newcommand{\Lag}{\mathcal{L}}
\newcommand{\rhoGOOMBAH}{\tilde{\rho}}
\newcommand{\sGOOMBAH}{\tilde{s}}
\newcommand{\goombahref}{\hyperref[alg:GOOMBAH]{\texttt{GOOMBAH}}\xspace}
\newcommand{\mspshortref}{\hyperref[alg:manifold_sampling]{\rm \texttt{MS-P}}\xspace}
\newcommand{\msdshortref}{\rm \texttt{MS-D}\xspace}
\newcommand{\msploopref}{\hyperref[alg:manifold_sampling_loop]{\rm \texttt{MS-P loop}}\xspace}
\newcommand{\lb}{\ell}
\newcommand{\ub}{u}
\newcommand{\ds}{\displaystyle}
\newcommand{\tn}{\textnormal}
\newcommand{\proj}[2]{\mathbf{proj}\left( #1 , #2 \right)}
\newcommand{\cop}[1]{\mathbf{co}\left(#1\right)} %
\newcommand{\clp}[1]{\mathbf{cl}\left( #1 \right)} %
\newcommand{\intp}[1]{\mathbf{int}\left( #1 \right)} %
\newcommand{\image}[1]{\mathbf{im}_F\left( #1 \right)} %
\newcommand{\hdom}{\image{\Levelmax \cap \Omega}} %
\newcommand{\subassref}[2]{{\Cref{ass:#1}.\ref{subass:#2}}}
\newtheorem{theorem}{Theorem}%
\newtheorem{lemma}[theorem]{Lemma}
\newtheorem{proposition}[theorem]{Proposition}%
\newtheorem{remark}{Remark}%
\newtheorem{defi}{Definition}
\newtheorem{assumption}{Assumption}%
\DeclareRobustCommand{\rchi}{{\mathpalette\irchi\relax}}
\newcommand{\irchi}[2]{\raisebox{\depth}{$#1\chi$}} %
\crefname{defi}{definition}{definitions}
\Crefname{defi}{Definition}{Definitions}
\crefname{lemma}{lemma}{lemmas}
\Crefname{lemma}{Lemma}{Lemmas}
\crefname{assumption}{assumption}{assumptions}
\Crefname{assumption}{Assumption}{Assumptions}
\providecommand{\keywords}[1]{\noindent\textbf{Keywords:} #1}
\newcommand{\jefflabel}[1]{\label{#1}}
\newcommand{\newref}[1]{\Cpageref{#1}}
\begin{document}

\title{Structure-Aware Methods for Expensive Derivative-Free Nonsmooth Composite Optimization}

\author[1]{Jeffrey Larson}

\author[1]{Matt Menickelly}

\affil[1]{Mathematics and Computer Science Division, Argonne National Laboratory 
\linebreak[4] \texttt{\small jmlarson@anl.gov}; \texttt{\small mmenickelly@anl.gov}
{\small (both authors contributed equally)}}

\maketitle

\abstract{We present new methods for solving a broad class of bound-constrained nonsmooth composite minimization problems.
These methods are specially designed for objectives that are some known mapping of outputs from a computationally expensive function.
We provide accompanying implementations of these methods: in particular,
a novel manifold sampling algorithm (\mspshortref) with subproblems that are in a sense primal versions of the dual problems solved by previous manifold sampling methods
and a method (\goombahref) that employs more difficult optimization subproblems.
For these two methods, we provide rigorous convergence analysis and guarantees.
We demonstrate extensive testing of these methods.
Open-source implementations of the methods developed in this manuscript can be found at 
\url{github.com/POptUS/IBCDFO/}.}
~\\

\keywords{Derivative-free optimization, Nonsmooth optimization, Composite optimization, Continuous selections, Manifold sampling.}

\section{Introduction}
We consider optimization problems of the form
\begin{equation}
\label{eq:prob_statement}
    \begin{array}{rl}
    \displaystyle\minimize_{x \in \Omega} & f(x),\\
    \end{array}
\end{equation}
where $f:\Reals^n\to\Reals$, and $\Omega$ is a subset of the $n$-fold Cartesian product of the extended reals defined by bound constraints, namely, $\Omega\defined\left\{x:\lb \leq x \leq \ub \right\}$.\jefflabel{Omega_def}
We additionally assume $f$ in~\eqref{eq:prob_statement} is a \emph{composite} function, that is, $f$ satisfies the following.
\begin{assumption}\label{ass:f}\
The function $f$ in~\eqref{eq:prob_statement} has the form $f(x) \defined h(F(x))$, where $h:\Reals^p\to\Reals$ and $F:\Reals^n\to\Reals^p$.
\end{assumption}
In particular, we are interested in the case where $h$ is a known function that is cheap to evaluate
(with a known subdifferential) but a single evaluation of $F$ requires
considerable time or computational resources.
In this paper we consider $h$ in \Cref{ass:f} to be from a fairly broad family of functions called \emph{continuous selections}.

\begin{defi}\label{def:continuous_selection}
  The function $h$ is a \emph{continuous selection} on a set $U$ if it is
  continuous on $U$ and
  \[
  h(z) \in \{h_j(z):h_j\in\Hdef\}, \quad \forall z \in U,
  \]
  where $\Hdef$ is a finite set of functions $h_j: \Reals^p\to\Reals$, called \emph{selection functions}.
\end{defi}

The composite functions represented by continuous selections are extensive,
encapsulating virtually all practical use cases of composite optimization.
For example,
$h$ can be the 1-norm~\cite{LMW16}, a quantile function (e.g., a minimum or
maximum of entries in $F$)~\cite{womersley1986algorithm,Hare2017b,MMSMW2018}, a
piecewise-affine function~\cite{fletcher1982second,yuan1985superlinear,KLW18}, or even a piecewise-nonlinear function~\cite{Larson2020}. As one
example of the latter, most general case, particle accelerator physicists often
seek parameters $x$ that minimizes the minimum spread
of a simulated beam over a finite set of locations $J$ along a beam line~\cite{Eldred2022}.
High-fidelity simulations of such beam lines may require many thousands
of compute hours, producing copious amounts of output.
One way of quantifying the beam spread is the \emph{normalized emittance}~\cite{Wiedemann2015},
which takes some computed quantities $F_{1,j}(x), F_{2,j}(x), F_{3,j}(x)$ for each $j \in J$
and combines them with the mapping $\sqrt{F_{1,j}(x) F_{2,j}(x) - F_{3,j}(x)^2}$.
Therefore, the outer function is
$h(z) \defined \min_{j \in J} \sqrt{z_{1,j} z_{2,j} - z_{3,j}^2}$.

In this paper we provide convergence results for---and implementations
of---various methods for solving~\eqref{eq:prob_statement}. We begin by
extending past work in \emph{manifold sampling methods}, providing an
empirically superior ``primal variant'' (\mspshortref) of the manifold sampling algorithm
proposed in~\cite{Larson2020}. That algorithm %
was developed
for only the unconstrained version of~\eqref{eq:prob_statement}, and involves
subproblems that are dual to the subproblems involved in the present work. We
will demonstrate theoretical convergence results for this new primal variant of
manifold sampling under reasonable assumptions. Additionally, we will
demonstrate a method (\goombahref) that uses more difficult trust-region subproblems than
are typically analyzed;
in general, there are no guarantees that these subproblems can be (approximately) solved in finite time.
However, by
safeguarding this optimization method with the primal variant, we will
yield a provably convergent method that we find to perform exceptionally well
in practice, particularly when function evaluations are expensive.

\paragraph{Terminology and notation}
Before proceeding, we record the following definition of \emph{essentially active selection functions} pertinent to continuous selections.
\begin{defi}[adapted from~\cite{Scholtes2012}] \label{def:pc1manifold}
  If $h$ is a continuous selection function on $U$, define
  \begin{align*}
    \cS_i \defined \left\{ z: h(z) = h_j(z) \right\}, \quad
    \tilde{\cS}_i \defined \clp{\intp{\cS_i}}, \quad
    \Act{z} \defined \left\{ i: z \in \tilde{\cS}_i \right\}.
  \end{align*}
  Elements of $\Act{z}$ are called \emph{essentially active indices}; any
  $h_j$ for which $j\in \Act{z}$ is an \emph{essentially active selection
  function for $h$ at $z$}.
\end{defi}
With these definitions, we can make assumptions on $h$. Of course, these
assumptions need hold only at points where $h$ could possibly be
evaluated over the course of an optimization run.
For a constant $\Deltamax > 0$\jefflabel{Deltamax_def} bounding the possible trust-region radii $\Delta > 0$\jefflabel{Delta_def} and a starting point $x^0$, define
\begin{equation}\label{eq:Lmaxdef}
  \Levelmax \defined \ds \bigcup_{x \in \Level(x^0)} \cB(x;\Deltamax),
\end{equation}
where $\cB(x;\Delta) \defined \{ y: \left\| x - y \right\| \le \Delta \}$\jefflabel{cBdef} and
$\Level(x^0)$ is the $f(x^0)$ level set of $f$:\jefflabel{Leveldef}  
$\Level(x^0) \defined \{x\in\Reals^n: f(x) \leq f(x^0)\}$.

\begin{assumption}\label{ass:h}\
  With \Crefrange{def:continuous_selection}{def:pc1manifold} we assume the following about $h$.

  \begin{enumerate}[label=\tn{\textbf{\Alph*}.},ref=\tn{\Alph*},leftmargin=*]
    \item The function $h$
    is a continuous selection\footnote{
For practical purposes, one should attempt to define/construct $\Hdef$ such
that $\Hdef$ contains only functions that are essentially active somewhere in
the domain of $h$, $\hdom$.
} on $\hdom$.
    \label{subass:h}

  \item For any $z \in \hdom$, the essentially active indices $\Act{z}$ are computable. \label{subass:Ieh}
  \end{enumerate}
\end{assumption}
\begin{remark}
In \subassref{h}{Ieh}, we use the word ``computable" because in many
instances of continuous selections, some computation---ideally not much
more than $\mathcal{O}(p)$ arithmetic operations and much less than the
cost of evaluating $F$---is likely required to determine $\Act{z}$.
For a simple example, if
$h(z) \defined \max_{j\in\{1,\dots,p\}} h_j(z)$,
then to determine $\Act{z}$, the obvious way to implement the continuous
selection is to sort $\{h_j(z)\}_{j\in\{1,\dots,p\}}$ in ascending order---an
$\mathcal{O}(p\log p)$ computation---and then return the sorted index
(or indices, in the case of a tie) corresponding to the maximum value among the
values of $h_j(z)$.
\end{remark}

In this manuscript we use subscripts to index scalars and superscripts to index vectors.
All norms are assumed to be Euclidean unless otherwise stated.
The \emph{closure}, \emph{interior}, and \emph{convex hull} of a set
$\cS$ are denoted $\clp{\cS}$, $\intp{\cS}$, and
$\cop{\cS}$, respectively.
For ease of reference, we maintain a glossary of notation in the supplemental material in \Cref{sec:table}.

\section{Motivation and Background}\label{sec:motivation_and_background}
In this section we first discuss the foundational definitions and assumptions
for model-based derivative-free optimization (DFO) methods. We then present and discuss specialized
trust-region subproblems for use in algorithms
solving~\eqref{eq:prob_statement}. Furthermore we provide a high-level overview
of the
methods to be analyzed and contrast them with other methods for nonsmooth
optimization.

\subsection{Model-based methods}\label{sec:model-based_methods}
Manifold sampling methods belong to the class of \emph{model-based trust-region
methods}. Although manifold sampling
methods need not be derivative-free methods
(see~\cite{MMSMW2017,MMSMW2018,BHMMW20}), the inspiration and analysis are
heavily influenced by these
model-based trust-region methods from DFO (see~\cite{Conn2009} for a
complete treatment). The software attached to this paper is intended for DFO,
but all that is needed is that models of each component function $F_i$
satisfy the following definition.

\begin{defi} \label{def:flmodels}
  A function $m^{F_i} \colon \Reals^n \to \Reals$ is said to be a \emph{gradient-accurate}
  model of $F_i$ on $\cB(x;\Delta)$ if there
  exists a constant $\kappaieg$
  independent of $x$ and $\Delta$, so that
  \[
  \left\|\nabla F_i(x+s) - \nabla m^{F_i}(x+s)\right\|\leq \kappaieg\Delta \qquad \forall s\in \cB(0;\Delta).
  \]
\end{defi}
\Cref{def:flmodels} is similar to the definition of fully linear models that is
common in model-based DFO (e.g.,
\cite[Definition 6.1]{Conn2009a}) except it does not require accuracy of
function values.
As we will see, our method's subproblems requires only accurate gradients.
\Cref{def:flmodels} also resembles the concept of ``order-1 subgradient accuracy''
in~\cite[Definition 4.2]{audet2020model} with the exception that
\Cref{def:flmodels} is a statement about gradient (as opposed to arbitrary
subgradient) accuracy.
In model-based DFO, algorithms exist for constructing
fully linear (and hence, gradient-accurate) models $m^{F_i}$ by interpolation or regression on values of
$F_i$. Our software will maintain gradient-accurate models by employing the
same interpolation/regression subroutines as found in the software
\texttt{POUNDERS}~\cite{SWCHAP14}.

\subsection{Manifold sampling subproblems}\label{sec:manifold_sampling_suproblems}

Manifold sampling methods (e.g.,~\cite{LMW16,KLW18,Larson2020}) evaluate $F$
at various points $y$ in the domain $\Omega$.
Because the evaluation of $F$ is assumed to involve a non-negligible expense, all past
points $y$ and their corresponding values $F(y)$ are typically stored in memory for the purpose of model building.
Given that the continuous selection structure of $h$ is known (via \Cref{ass:h}), the values of $h_j(z)$ and $\nabla h_j(z)$ (for
any $h_j\in\Hdef$) are readily available at any point $z$, even if $h_j$ is not an essentially active selection function for $h$ at $z$.
Various manifold sampling implementations differ in
how they determine which indices $j$ corresponding to $h_j\in\Hdef$ to employ in subproblems, and how the
information $h_j(F(y))$ and $\nabla h_j(F(y))$ are used to produce putative iterates.
We will now describe how, in the novel manifold sampling method introduced in this paper, we choose indices of selection functions and subsequently choose putative iterates.

Over the run of our manifold sampling method, we will include \emph{any} point
$y\in\Reals^n$ for which $f(y)$ has been previously evaluated in a set $Y$,\jefflabel{Ydef}
along with the corresponding indices $\Act{F(y)}$.
On each iteration of the method, we will use an algorithmically determined current iterate $x^k$, an algorithmically updated radius $\Delta_k$, and this set $Y$ to determine a set of indices $\Gen^k$.
As shorthand notation, let $f_j(\cdot) \defined h_j(F(\cdot))$ for a given
selection index $j$. Moreover, with the model Jacobian $\nabla M_k(x^k)$\jefflabel{Mdef} obtained
by concatenating the model gradients $\nabla m^{F_i}_k$ in the $k$th iteration,
we abbreviate $g_j^k \defined \nabla M_k(x^k)\nabla h_j(F(x^k))$.\jefflabel{gen_def}
Then, we define $\Gen^k$ via
\begin{equation}
\label{eq:gen_k}
\begin{array}{rl}
\Gen^k & \defined \left\{j: j\in\Act{F(y)} \text{ for some } y\in Y, f_j(x^k) > f(x^k) \text{, and } \|x^k-y\|\leq c_1\Delta_k^2\right\}\\
& \bigcup
\left\{j: j\in\Act{F(y)} \text{ for some } y\in Y, f_j(x^k) \leq f(x^k) \text{, and } \|x^k-y\|\leq c_2\Delta_k\right\},
\end{array}
\end{equation}
where $c_1,c_2\geq 0$ are algorithmic parameters.
The presence of the term $c_1\Delta_k^2$ in~\eqref{eq:gen_k} is different from past manifold sampling work;
previous work would simply include $j\in\Gen^k$ provided there existed $y\in Y\cap
\cB(x^k;c_1\Delta_k)$ such that $j\in\Act{F(y)}$.
It will become clear in the proof of \Cref{lem:success} why this choice was made in the present work.

Note that in the extreme case where $c_1=c_2=0$ in~\eqref{eq:gen_k}, $\Gen^k$ includes indices corresponding only to selection functions
active at $F(x^k)$. This extreme case corresponds to the generator set
construction employed in \texttt{CMS} of~\cite{LMW16}.
The extreme choice of $c_1=c_2=0$ corresponds to an approximation of the Clarke subdifferential $\partialC f(x^k)$.\jefflabel{C_def}
By allowing strictly positive values of $c_1, c_2$, we are in principle constructing an inner approximation of the Clarke $\epsilon$-subdifferential ${\partialC}_{\epsilon} f(x^k) = \cop{\bigcup_{y\in \cB(x^k; \epsilon)} \partialC f(y)}$.

With this set of indices $\Gen^k$, we can now define a subproblem for use in the $k$th iteration to compute trial steps.
Additionally defining
\begin{equation*}
\beta_{j,k} \defined \displaystyle\max\{0, f_j(x^k)- f(x^k)\},
\end{equation*}
we define a \emph{primal model} via
\begin{equation}
    \label{eq:primal_model}
    m_{k}(s) = \displaystyle\max_{j\in\Gen^k}\{f_j(x^k) + (g^k_j)^\top s - \beta_{j,k}\} + \frac{1}{2}s^\top H^k s - f(x^k),
\end{equation}
where $H^k\in\Reals^{n\times n}$ is a symmetric matrix.
The subproblem employed in each iteration is then given by\footnote{For later reference, observe that
in the special case of $\Gen^k$ where $c_1=c_2=0$, a null step $s=0$
in~\eqref{eq:subproblem} implies that $v=f(x^k)$, which in turn implies the
objective value of~\eqref{eq:subproblem} is 0.}
\begin{equation}
\label{eq:subproblem}
\begin{array}{rl}
\displaystyle\minimize_{s} & m_{k}(s) \\
\text{subject to:}  & \|s\|\leq\Delta_k \\
            & \lb \leq x^k + s \leq \ub.\\
\end{array}
\end{equation}

We now provide some intuition concerning~\eqref{eq:subproblem} -- in
particular, explaining why the model of an arbitrary selection function $h$ is
replaced with a piecewise maximum \eqref{eq:primal_model} -- and connect it
with past manifold sampling work.
We first equivalently reformulate~\eqref{eq:subproblem} as
\begin{equation}
\label{eq:equiv_subproblem}
\begin{array}{rl}
\displaystyle\minimize_{v,s} & v+ \frac{1}{2}s^\top H^k s - f(x^k)\\
\text{subject to:} & v \geq f_j(x^k) + (g_j^k)^\top s - \beta_{j,k} \quad \forall j\in\Gen^k\\
            & \|s\|\leq\Delta_k \\
            & \lb \leq x^k + s \leq \ub.\\
\end{array}
\end{equation}
To handle bounds $\lb,\ub\in(\Reals\cup\{\infty\})^n$,\jefflabel{LandUdef} we introduce two sets,
\begin{equation*}
    \Lset\triangleq \{i\in 1,\dots,n: \lb_i = -\infty\}, \quad
    \Uset\triangleq \{i\in 1,\dots,n: \ub_i = \infty\}.
\end{equation*}
We also introduce, for brevity, a bilinear function
$$\Lambda(\lambda; x,a) \triangleq \lambdaa^\top a + \lambdal^\top(x-\lb) + \lambdau^\top(\ub-x),$$
where $a^k\in\Reals^{|\Gen^k|}$ \jefflabel{a_def} is defined entrywise by
$$ [a^k]_j = f(x^k) - f_j(x^k) + \beta_{j,k}.$$
With this notation,
one can show (see~\cite[Proposition 6]{MMSMW2017} for the case where $\Omega = \Reals^n$) that the strong Lagrangian dual of~\eqref{eq:subproblem} is
\begin{equation}
\label{eq:dual}
\begin{array}{rrl}
\displaystyle\maximize_{\lambdaa, \lambdal,\lambdau,\mu,\nu} & \frac{1}{2}\nu^\top (G^k\lambdaa -\lambdal + \lambdau) -\mu\displaystyle\frac{\Delta_k^2}{2} -\Lambda(\lambda; x^k,a^k) & \\
\text{subject to:} & \lambdaa, \lambdal, \lambdau, \mu \geq 0 &\\
        & [\lambdal]_i = 0 & , i\in \Lset\\
        & [\lambdau]_i = 0 & , i \in \Uset\\
    & e^\top\lambdaa = 1 & \\
    & H^k + \mu I_n \succeq 0 & \\
    & (H^k + \mu I_n)\nu = -G^k\lambdaa +\lambdal - \lambdau & , \\
\end{array}
\end{equation}
where
$G^k\in\Reals^{n\times|\Gen^k|}$ is the matrix with columns $\{g_j^k: j\in\Gen^k\}$,
$I_n\in\Reals^{n\times n}$ is an identity matrix, and
$e\in\Reals^{|\Gen^k|}$ is the vector of all ones.
As one might expect, our manifold sampling method derives a stationary measure
from~\eqref{eq:dual}; in particular, by setting $\Delta_k=1$ and
$H^k=0$, we arrive at a stationary measure $\rchi_k$  given by
\begin{equation}
  \small
\label{eq:chi}
\rchi_k \triangleq \displaystyle\minimize_{\lambdaa,\lambdal,\lambdau}\left\{
\begin{array}{l}
\|G^k\lambdaa - \lambdal + \lambdau\| +
\Lambda(\lambda; x^k,a^k) : \;\;
\lambdaa, \lambdal, \lambdau \geq 0, \\
e^\top\lambdaa = 1,\;
[\lambdal]_i = 0 \; i\in \Lset,\;
[\lambdau]_i = 0 \; i\in \Uset
\end{array}
\right\}.
\end{equation}
Because $H_k=0$, the constraint $\mu I_n \succeq 0$ coupled with the
maximization of $-\mu\frac{\Delta_k^2}{2}$ in \eqref{eq:dual} implies that
$\mu=0$. As a sanity check, notice that each entry of $a^k$ satisfies $f(x^k) -
f_j(x^k) + \beta_{j,k} \geq 0$. This, coupled with the observation that
$\lambdaa,\lambdal,\lambdau, x^k-\lb, \ub-x^k\geq 0$ for any $x^k$ feasible
with respect to the constraints of~\eqref{eq:subproblem}, gives the expected
characteristic that $\rchi_k\geq 0$.

We observe that in the absence of bound constraints (so that, effectively, $\lambdal = \lambdau =0)$, and replacing $a^k$ with a vector of zeros, the subproblem~\eqref{eq:chi} amounts to
\begin{equation}
    \small
    \label{eq:msd}
    \displaystyle\minimize_{\lambda} \left\{
    \begin{array}{l}
    \|G^k\lambda\| : \;\;
    \lambda \geq 0, e^\top\lambda = 1
    \end{array}
    \right\}.
\end{equation}
We observe that~\eqref{eq:msd} is precisely the direction-finding subproblem employed in previous manifold sampling works~\cite{LMW16,KLW18,Larson2020};
the $\lambda^*$ solving~\eqref{eq:msd} would be used to define a \emph{master model gradient} $g^k = G^k\lambda^*$, which in turn would be used to define a smooth \emph{master model}
\begin{equation}
    \label{eq:dual_model}
    m^D_k(s) \triangleq (g^k)^\top s + \frac{1}{2}s^\top H^k s.
\end{equation}
For this reason we refer to the model $m_k(s)$ in~\eqref{eq:primal_model} as the \emph{primal model}, while we refer to $m^D_k(s)$ in~\eqref{eq:dual_model} as the \emph{dual model}.

Having established this connection between previous and present work, we next address the question of what the primal model~\eqref{eq:primal_model} is actually modeling.
We illustrate in \Cref{fig:cartoons} the difference between~\eqref{eq:primal_model} and the dual model~\eqref{eq:dual_model} in the context of a trust-region subproblem.
We remark (importantly!) that the cartoon
function $h$ in \Cref{fig:cartoons} is the piecewise maximum function, and as
such,~\eqref{eq:primal_model} is a particularly good model of
$h\circ F$. Although, as we will show, employing the primal
model~\eqref{eq:primal_model} in the subproblem~\eqref{eq:subproblem} leads to
desirable convergence properties, the primal
model need not be a great local model of $h\circ F$ on any given iteration of a
manifold sampling problem in general. These considerations motivate our solver
\goombahref,  discussed in the next section.
\begin{figure}[t]
    \centering
    \includegraphics[width=.45\textwidth]{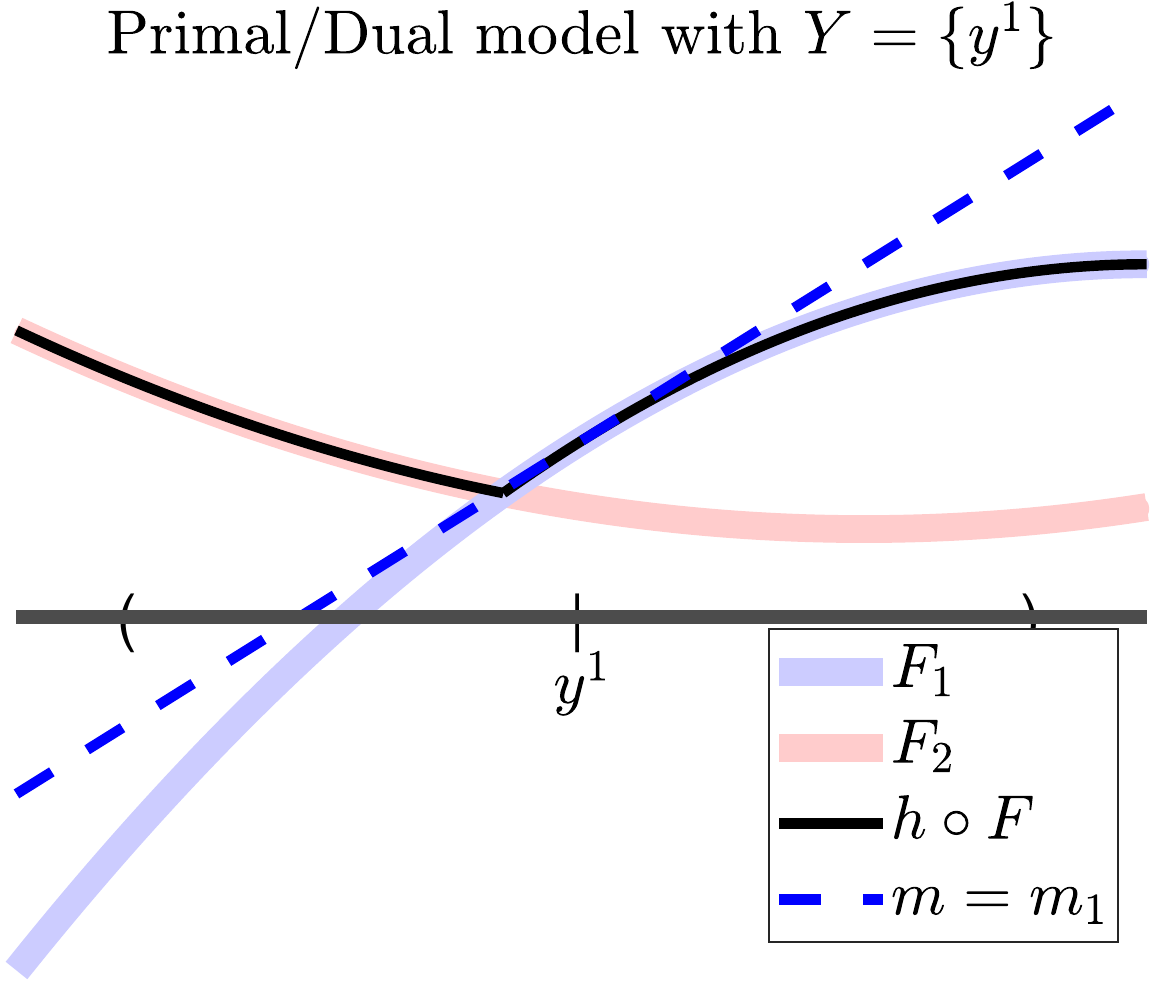}\\[10pt]
    \includegraphics[width=.45\textwidth]{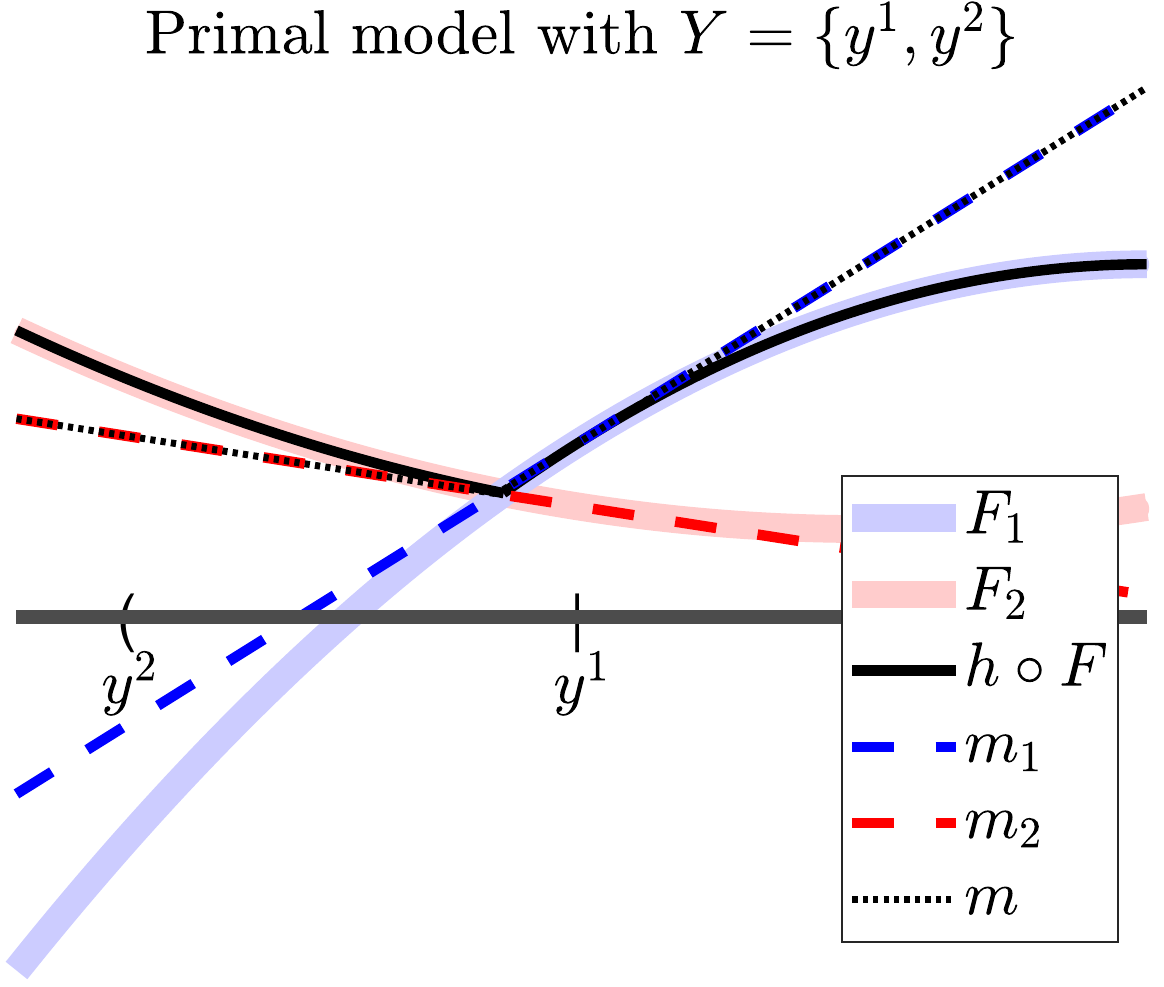}
    \hfil
    \includegraphics[width=.45\textwidth]{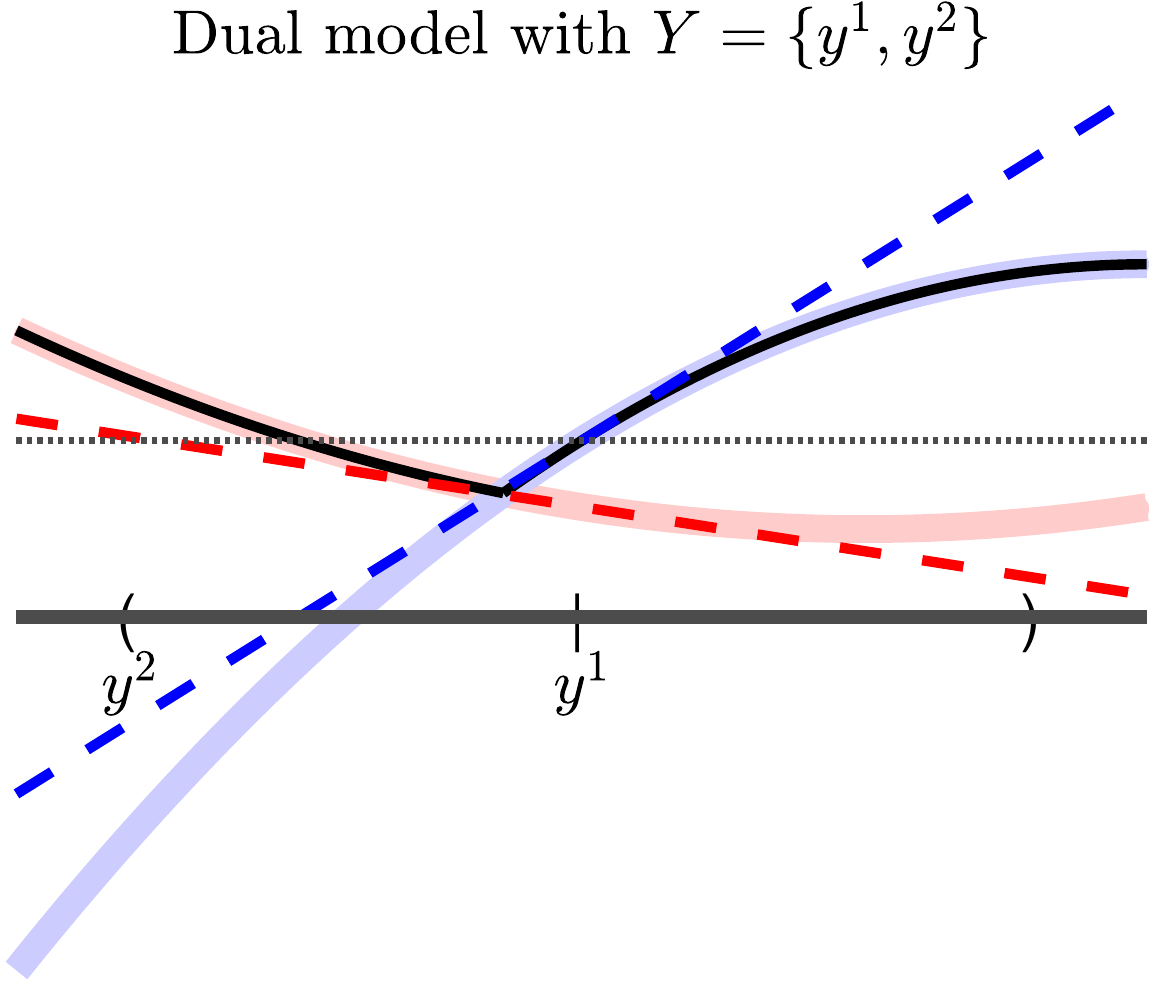}
    \caption{In the top row, with $Y=\{y^1\}$, the primal model~\eqref{eq:primal_model} employed by the
    manifold sampling method of this paper and the dual model~\eqref{eq:dual_model} of previous
    manifold sampling methods are identical. With this common model, the
    minimizer of the subproblem~\eqref{eq:subproblem} is $y^2$. Because the
    function $F_2$ is essentially active at $y^2$ (but not at $y^1$), the manifold sampling
    loop needs to include this information before proceeding. However, the primal and dual models differ when
    $Y=\{y^1,y^2\}$, as shown in the bottom row. The minimizer of~\eqref{eq:subproblem} provides a decrease in $h\circ F$,
    while~\eqref{eq:msd} produces a
    model gradient with a norm of zero, so no progress is made on the given iteration.
    \label{fig:cartoons}
    }
\end{figure}

\subsection{High-level discussion of \texttt{MS-P}}
Each iteration of \mspshortref consists of a pass through the \msploopref. 
In the \msploopref, we first construct $\Gen^k$ according
to~\eqref{eq:gen_k} and ensure that for each $j\in\Gen^k$, $f_j(x^k) +
(g_j^k)^\top s$ is a gradient-accurate model of $f_j$ on $\cB(x^k;\Delta_k)$.
We remark that, given a method to construct fully linear (and, therefore, gradient-accurate) models $M$ of $F$, it is straightforward to show under \Cref{ass:h_j} that $h(M(x))$ is a gradient-accurate model of $h(F(x))$; see~\cite[Theorem 32]{chen2021error}.
We then (approximately\footnote{The necessary conditions on approximate solution quality are given in \Cref{ass:cauchy}})
solve the subproblem~\eqref{eq:equiv_subproblem} to obtain
$(v_k,s^k)$.
We also
solve the
subproblem~\eqref{eq:chi} to measure stationarity $\rchi_k$. Provided
$\Delta_k$ is sufficiently small relative to $\rchi_k$ ($\Delta_k \leq
\eta_2\rchi_k$ for an algorithmic parameter $\eta_2$\jefflabel{eta2_def}), we continue the
iteration; otherwise we abort the iteration early, shrinking the trust region
and keeping the incumbent $x^{k+1}=x^k$. We note that, in general, we require
$\eta_2 \in (0,1/\kappabmh)$, where $\kappabmh$\jefflabel{kappabmh} is a fixed constant satisfying $\| H^k \| \le \kappabmh$ for all $k$.
Thus, in the special case where $\kappabmh=0$---which is the case in our numerical experiments---$\eta_2=\infty$ is an
acceptable setting, and thus this early termination will never happen.

Provided the \msploopref did not terminate early because of an overly large $\Delta_k$, we
evaluate $F(x^k+s^k)$ and compute the ratio
\begin{equation}
\label{eq:rho}
\rho_k \defined \displaystyle\frac{f(x^k)-f(x^k+s^k)}{f(x^k)-v_k-\frac{1}{2}s^{k\top}H^ks^k} =
\frac{f(x^k)-f(x^k+s^k)}{m_k(0) - m_k(s^k)}.
\end{equation}
If $\rho_k\geq\eta_1$ for some
algorithmic parameter $\eta_1>0$\jefflabel{eta1_def}, then the $k$th iteration is deemed
\emph{successful}, and we update $x^{k+1}$ with $x^k+s^k$ and possibly increase
$\Delta_k$ by a factor $\gammai \ge 1$ \jefflabel{gammai_def} and end the $k$th iteration.

On the other hand, if $\rho_k$ is too small and some new manifold was identified
when evaluating $F(x^k+s^k)$, we recompute $\Gen^k$ as in~\eqref{eq:gen_k}. If
$\Gen^k$ is unchanged by the addition of $x^k + s^k$ to $Y$, we shrink
$\Delta_k$ by a positive factor $\gammad < 1$ \jefflabel{gammad_def} and remain in the \msploopref if $\Gen^k \bigcap \Act{F(x^k + s^k)}$
is empty. If $\Gen^k \bigcap \Act{F(x^k + s^k)}$ is nonempty, the iteration is
declared \emph{unsuccessful}, and we set $x^{k+1}$ to $x^k$ and decrease $\Delta_k$. 
If $\Gen^k$ is in fact changed, we update models and $\Gen^k$,
resolve~\eqref{eq:subproblem} and~\eqref{eq:chi} with the new
$\Gen^k$, reevaluate $\rho_k$, and recheck for success. Note that
\Cref{lem:finite_termination} will show that the \msploopref will terminate in
finite time.

\subsection{High-level discussion of \texttt{GOOMBAH}}

As noted previously, any primal
model~\eqref{eq:primal_model} used by
\mspshortref can potentially be a poor model of
$h(F(x))$. However, the primal model captures the local first-order information
necessary to attain a convergence result in the limit of the $\{x^k\}$
generated by \mspshortref. Given that a closed form of $h$
is known and available, one can consider a
modification of \mspshortref with a trust-region subproblem of the form
\begin{equation} \label{eq:GOOMBAH_subproblem}
\begin{array}{rl}
\displaystyle\minimize_{s} & h(M(x^k + s)) \\
\text{subject to:} & \|s\|\leq\Delta_k,
\end{array}
\end{equation}
where $M$ remains a gradient-accurate model of $F$ on $\cB(x^k;\Delta_k)$.
If, for example, $M$ contains quadratic models for components of $F$ and if $h$ is a
general selection function, (approximately) solving~\eqref{eq:GOOMBAH_subproblem} may require considerable computational effort,
and no certificate of (approximate) optimality may be available. 
Importantly for deriving convergence results, there may be no known methods that can ensure Cauchy-like decrease in finite time given
general forms of~\eqref{eq:GOOMBAH_subproblem}. Therefore, assuming that
approximate solutions to~\eqref{eq:GOOMBAH_subproblem} are available (as is
done in \Cref{ass:cauchy} for the subproblem~\eqref{eq:equiv_subproblem}) would be a
somewhat hollow assumption.
Depending on the computational expense of
evaluating $F$, however, deploying a global optimization solver on~\eqref{eq:GOOMBAH_subproblem} with an appropriate time budget may be
justified. %
Thus, we propose to use the machinery of 
the \msploopref with its stationary measure
$\rchi_k$ to ensure sufficient decrease in each iteration with
respect to the stationary measure, thereby guaranteeing global convergence.
Pseudocode for this proposed algorithm is given in \goombahref, so
named because it employs a trust-region subproblem that is a glassbox optimization of a model of a blackbox in a hypersphere.

After computing the solution to~\eqref{eq:GOOMBAH_subproblem} in
each iteration, \goombahref computes the quantity
\begin{equation}\label{eq:rhoGOOMBAH}
\rhoGOOMBAH_k \defined \displaystyle\frac{f(x^k)-f(x^k+\sGOOMBAH^k)}{\Delta_k^{1+\omega}}
\end{equation}
for an algorithmic constant $\omega>0$.
If $\rhoGOOMBAH_k > \eta_1$, then we accept the subproblem~\eqref{eq:GOOMBAH_subproblem} solution as the next iterate as in a standard trust-region
method; if, on the other hand, the sufficient decrease suggested by~\eqref{eq:rhoGOOMBAH} is not attained, then we enter the \msploopref.
We study the utility of such recourse in the
numerical section by studying \goombahref that does not resort to manifold
sampling when the candidate~\eqref{eq:GOOMBAH_subproblem} solution is poor.

\subsection{Comparing/contrasting with other approaches}
Nonsmooth general (i.e., \emph{noncomposite}) optimization methods---that is,
methods that are suitable for minimizing continuous selections but that do \emph{not}
assume knowledge of $h$ as in \Cref{ass:h}---that assume access to an oracle
capable of computing $f(x)$ and a(n arbitrary) subgradient $\xi(x)\in\partial
f(x)$ are plentiful. Most fundamentally, there exist counterparts of
classical gradient descent methods,
usually termed \emph{subgradient descent methods};  see~\cite{rockafellar2009variational}
for a textbook treatment of such methods.

An alternative class of methods for the solution of nonsmooth optimization
methods has been bundle methods; see~\cite{makela2001} for a survey.
Originally designed for \emph{convex} nonsmooth optimization, bundle methods
maintain a \emph{bundle} of cutting planes and iteratively solve a subproblem
involving a piecewise-affine model,
\begin{equation}
\label{eq:bundle_sp}
\displaystyle\minimize_{s\in\Reals^n}\displaystyle\max_{y\in Y_k} f(y) + \xi(y)^\top (x^k + s - y),
\end{equation}
where, in the notation developed so far, $Y_k\subseteq Y$ is algorithmically
determined and $\xi(y)$ still refers to the arbitrary subgradient returned by
an oracle. We observe that the inner maximization in~\eqref{eq:bundle_sp} is
clearly related to the affine functions involved in the primal model~\eqref{eq:primal_model}.
Notably, however, the
affine functions of~\eqref{eq:bundle_sp} are first-order models of $f$
centered at individual points $y\in Y_k$, whereas the affine models in~\eqref{eq:primal_model} are (approximate) first-order models of individual $f_j$
centered at $x^k$. That is, the piecewise-affine model~\eqref{eq:primal_model} of manifold sampling directly exploits knowledge of the
continuous selection, whereas the piecewise-affine model in~\eqref{eq:bundle_sp} does not.

While both subgradient and bundle methods were originally intended for convex
nonsmooth optimization, there exist extensions suitable for nonconvex
optimization. For more recent work in nonconvex subgradient descent methods,
see~\cite{Bagirov2008a,Bagirov2013}; for more recent work in nonconvex
bundle methods, see~\cite{hare2010redistributed,hare2016proximal,Kiwiel96}.

A third class of methods for the solution of noncomposite nonsmooth (nonconvex)
optimization problems given access to gradients is \emph{gradient sampling
methods}~\cite{burke2020gradient,burke2002approximating,burke2005robust,curtis2013adaptive,Kiwiel2007}. Gradient sampling methods are applicable to
the minimization of locally Lipschitz functions, a class of nonsmooth functions
broader than that analyzed in the present paper. By Rademacher's theorem, a
function that is locally Lipschitz on $\Reals^n$ admits a gradient almost everywhere,
and hence one can compute $\nabla f(x)$ for almost all $x\in\Reals^n$; we will
call the full measure set of differentiable points
$\mathcal{D}\subseteq\Reals^n$. At each iteration, gradient sampling methods
maintain a finite sample of gradients $\mathcal{G}^k$ at points of
differentiability in a ball, $\mathcal{G}^k\subset
\cB(x^k;\Delta_k)\cap\mathcal{D}$. We note that we intentionally overloaded
$\Delta_k$ to denote the sampling radius of gradient sampling, since there is a
connection to the $\Delta_k$ of manifold sampling via $\Gen^k$ in~\eqref{eq:gen_k}. The primal subproblem solved in each iteration of a gradient
sampling method to obtain a trial step $s^k$ is
\begin{equation}\label{eq:gs_sp}
\displaystyle\minimize_{s\in\Reals^n} f(x^k) + \displaystyle\max_{g\in\mathcal{G}^k} \left\{g^\top s + \frac{1}{2}s^\top H^k s\right\},
\end{equation}
where $H^k$ is a (positive-definite) matrix.
Importantly, the dual problem to~\eqref{eq:gs_sp} can be written as
\begin{equation}
    \label{eq:gs_sp_dual}
    \displaystyle\minimize_{\lambda\in\Reals^{|\mathcal{G}^k|}} \frac{1}{2}\|G_k\lambda\|_{[H^k]^{-1}}^2: e^\top\lambda = 1, \lambda\geq 0,
\end{equation}
where $G_k$ is a matrix with columns given by the sampled gradients in
$\mathcal{G}^k$. The dual problem~\eqref{eq:gs_sp_dual} reveals when $H^k
= I_n$, and as $\mathcal{G}^k$ becomes dense in $\cB(x^k;\Delta_k)$, the search
direction $s^k$ approaches a steepest descent direction for the locally
Lipschitz function $f$. It is clear that when we assume~\eqref{eq:prob_statement} is an unconstrained problem (so that $\lambda_\ell =
\lambda_u = 0$),~\eqref{eq:dual} similarly provides a steepest descent direction for
$f$. However, because continuous selections involve only finitely many
selection functions (those indexed by $\Hdef$), a manifold sampling method
never needs to sample densely in $\cB(x^k;\Delta_k)$ in order to obtain
arbitrarily good approximations to the steepest descent direction: only
finitely many selection functions in $\Hdef$ active at (or near) $x^k$ must ever be
sampled.

All three of these classes of methods (subgradient descent methods, bundle methods, gradient sampling methods) admit derivative-free variants, whereby
the subgradient oracle required for these methods is replaced with approximate
(e.g., finite-difference-based) approximations. Notably, in a derivative-free
setting, Bagirov et al.~\cite{Bagirov2007} proposed the so-called discrete
gradient method, which computes approximate subgradients for use in a
subgradient descent framework; see also~\cite{riis2018geometric}.
Derivative-free bundle methods have also been proposed;
see~\cite{karmitsa2012limited}. Even closer to our proposed method is a
derivative-free bundle method employing a trust region---as opposed to a
proximal point mechanism---proposed in~\cite{liuzzi2019trust}.
Kiwiel~\cite{KIWIEL2010} proposed a gradient sampling method for the
derivative-free setting by computing approximate (finite-difference) gradients.
Moreover, as a fourth class of methods most practical for DFO,
direct-search methods have been historically concerned with
convergence to Clarke stationary points and are hence suitable for nonsmooth
optimization; see the book~\cite{AudetHare2017} for an excellent treatment of
these methods.

Special attention has been paid to \emph{composite} optimization in the
literature, that is, for cases where $h$ in \Cref{ass:f} is known and an oracle
exists for computing $F(x)$ and $\nabla F(x)$. Works from the 1980s provide
fundamental analyses for the case where $h$ is convex and $\nabla F$ is
available~\cite{fletcher1982model,fletcher1982second,womersley1986algorithm,yuan1985conditions,yuan1985superlinear}. A bundle method for composite
nonsmooth optimization with convex $h$ was proposed
in~\cite{sagastizabal2013composite}. Recently, Bareilles et al.~\cite{bareilles2022} proposed a
peculiar algorithm that identifies a smooth manifold (see also
\cite{womersley1986algorithm}) via a subproblem involving a known proximal
operator for $h$ and then solves a second-order subproblem restricted to that
manifold. The use of the term \emph{manifold} in~\cite{bareilles2022} is more
formal than that employed here, and the assumptions imposed on $h$ and the
manifold structure it creates are nontrivial when compared with the generality and simplicity of
continuous selections.

In the derivative-free setting, the authors
in~\cite{garmanjani2016trust,grapiglia2016derivative} analyze algorithms for
composite optimization where $h$ is a general convex function but $\nabla F$ is
not available.
To the best of our knowledge,~\cite{KLW18} was the first
derivative-free work to consider a nonconvex $h$ (piecewise-linear),
while~\cite{Larson2020} was the first derivative-free work to consider a
general nonconvex continuous selection $h$.

\section{Analysis}
We now state the algorithms below and the provide 
assumptions (\Cref{sec:additional_assumptions}) used in the analyses of 
convergence of \mspshortref (\Cref{sec:msp_analysis}) and \goombahref
(\Cref{sec:goombah_analysis}).

\renewcommand{\thealgocf}{}
\SetAlgorithmName{\texttt{MS-P loop}}{}{}
\SetAlgoCaptionSeparator{:}
\begin{algorithm2e}[H]
  \caption{Manifold sampling primal loop \label{alg:manifold_sampling_loop}}
  \fontsize{8}{8}\selectfont
  \SetAlgoNlRelativeSize{-4}
  \DontPrintSemicolon
  \SetKw{true}{true}
  \SetKw{return}{return}
  \SetKwFunction{proc}{manifold\_sampling\_loop}
  \let\oldnl\nl%
  \newcommand{\nonl}{\renewcommand{\nl}{\let\nl\oldnl}}%
  \SetKwInOut{input}{input}
  \SetKwInOut{output}{output}
  \input{Iterate $x^k$, radius $\Delta_k$, set of points $Y$}
  \output{$x^{k+1}$, $\Delta_{k+1}$, $Y$}

  Set $c_1,c_2\geq 0$, $\kappabmh \geq 0$, $\eta_1>0$, $\eta_2 \in(0,1/\kappabmh)$, $0<\gammad < 1 <\gammai$

  Store $\bar{\Delta} \gets \Delta_k$

    \While{\true}{
    Ensure $M$ is a gradient-accurate model of $F$ on $\cB(x^k;\Delta_k)$ using $Y$ (this may require evaluating $F$ at additional points and adding them to $Y$)

    Construct $\Gen^k$ according to~\eqref{eq:gen_k} with $Y$, $\Delta_k$, $c_1$, and $c_2$

    Choose $H^k$ such that $\|H^k\|\leq\kappabmh$

    (Approximately) solve subproblem~\eqref{eq:equiv_subproblem} to obtain $(v_k,s^k)$

    Solve subproblem~\eqref{eq:chi} to obtain $\rchi_k$

    \If{$\Delta_k > \eta_2\rchi_k$ \label{line:acceptable}}{
      $x^{k+1}\gets x^k, \Delta_{k+1} \gets \gammad \bar{\Delta}$ \tcp*{unsuccessful iteration}

        \return
    }

    Evaluate $F(x^k + s^k)$ and let $Y\gets Y\cup x^k + s^k$

    Compute $\rho_k$ according to~\eqref{eq:rho}

    \eIf{$\rho_k \geq \eta_1$}
    {$x^{k+1}\gets x^k + s^k, \Delta_{k+1} \gets \gammai\bar{\Delta}$ \tcp*{successful iteration}

    \return
    }
    {
    Compute temporary $\bar{\Gen}^k$ according to~\eqref{eq:gen_k}

    \eIf{$\bar{\Gen}^k=\Gen^k$}
    {
    \eIf{$\Gen^k \bigcap \Act{F(x^k + s^k)} \neq \emptyset$}%
    {
    $x^{k+1}\gets x^k$, $\Delta_{k+1}\gets \gammad \bar{\Delta}$ \tcp*{unsuccessful iteration} \label{line:unsuccessful_break}
    \return 
    }
    {
        $\Delta_{k} \gets \gammad\Delta_k$ \label{line:decrease_delta}
    }
    }
    {
    $\Gen^k\gets\bar{\Gen}^k$ \label{line:grow_gen_set}
    }
    }
    }
\end{algorithm2e}

\SetAlgorithmName{\texttt{MS-P}}{}{}
\SetAlgoCaptionSeparator{:}
\begin{algorithm2e}[H]
  \caption{Manifold sampling (primal) for continuous selections \label{alg:manifold_sampling}}
  \fontsize{8}{8}\selectfont
  \SetAlgoNlRelativeSize{-4}
  \SetKw{true}{true}
  \SetKw{break}{break}
  \SetKw{return}{return}
  \SetKwFunction{proc}{\msploopref}
  \let\oldnl\nl%
  \newcommand{\nonl}{\renewcommand{\nl}{\let\nl\oldnl}}%

  Choose initial iterate and radius, $x^0, \Delta_0 > 0$

  Initialize history of evaluated points $Y$ (at least with $x^0$)

   \For{$k=0,1,2,\dots$}{
   $x^{k+1}$, $\Delta_{k+1}$, $Y \gets $\proc($x^k$, $\Delta^k$, $Y$)
    }
\end{algorithm2e}

\SetAlgorithmName{\texttt{GOOMBAH}}{}{}
\begin{algorithm2e}[H]
  \caption{\small Glassbox Optimization Of Model of Blackbox in A Hypersphere \label{alg:GOOMBAH}
  }
  \DontPrintSemicolon
  \fontsize{8}{8}\selectfont
  \SetAlgoNlRelativeSize{-4}
  \SetKw{true}{true}
  \SetKw{break}{break}
  \SetKw{continue}{continue}
  \SetKw{return}{return}
  \SetKwFunction{proc}{\msploopref}
  \let\oldnl\nl%
  \newcommand{\nonl}{\renewcommand{\nl}{\let\nl\oldnl}}%

  Choose initial iterate and radius, $x^0, \Delta_0 > 0$

  Set algorithmic constants $\tilde{\eta}_1>0$, $\omega > 0$

  Initialize history of evaluated points $Y$ (at least with $x^0$)

   \For{\label{line:start_of_for_loop_j}$k=0,1,2,\dots$}{
    Ensure $M$ is a gradient-accurate model of $F$ on $\cB(x^k;\Delta_k)$
    using $Y$ (this may require evaluating $F$ at additional points and adding them to $Y$)

      (Approximately) solve~\eqref{eq:GOOMBAH_subproblem} to produce a step $\sGOOMBAH^k$

      \If{$x^k + \sGOOMBAH^k\not\in Y$}
      {Evaluate $F(x^k+\sGOOMBAH^k)$ and let $Y \gets Y \cup x^k + \sGOOMBAH^k$

        Compute $\rhoGOOMBAH_k$ as in~\eqref{eq:rhoGOOMBAH} with $\omega$

      \If{$\rhoGOOMBAH_k > \tilde{\eta}_1$}{
      $x^{k+1}\gets x^k + \sGOOMBAH^k$, $\Delta_{k+1}\gets \gammai\Delta_k$

      \continue (to \Cref{line:start_of_for_loop_j})
      }
      }
      $x^{k+1}$, $\Delta_{k+1}$, $Y \gets $\proc($x^k$, $\Delta^k$, $Y$)
    }
\end{algorithm2e}

\subsection{Additional assumptions}\label{sec:additional_assumptions}

We first state additional regularity assumptions on $h$ and $F$ that we make in
order to provide rigorous convergence guarantees about \mspshortref.
We note that even if these assumptions were violated, \mspshortref
would be well defined, but convergence guarantees may not hold, even on a computer with
infinite precision.

We first assume some regularity conditions on $F$ using some constants from \Cref{def:constants}.

\begin{assumption}\label{ass:f2}\
  \begin{enumerate}[label=\tn{\textbf{\Alph*}.},ref=\tn{\Alph*},leftmargin=*]

    \item Each component $F_i$ of $F$ is Lipschitz continuous with some
      constant $K_{F_i}$. \label{subass:Fval}

    \item Each component $F_i$ of $F$ has a Lipschitz continuous gradient with constant $\LnFi$. \label{subass:F}

    \item For a point $x^0 \in \Reals^n$, assume the set $\Level(x^0) \defined \left\{ x: f(x) \le
      f(x^0) \right\}$ is bounded. \label{subass:level}

  \end{enumerate}
\end{assumption}

We additionally cast the following assumptions concerning selection functions that comprise the outer function $h$:
\begin{assumption}\label{ass:h_j}
  On $\hdom$, each $h_j\in\Hdef$ is Lipschitz continuous, is Lipschitz
  continuously differentiable, and has bounded gradients.
  That is, for all $z,z' \in \hdom$:
  \begin{enumerate}[label=\tn{\textbf{\Alph*}.},ref=\tn{\Alph*},leftmargin=*]
    \item There exists $K_{h_j}$ such that $|h_j(z) - h_j(z')|\leq K_{h_j}\|z - z'\|$. \label{subass:func}
    \item There exists $K_{\nabla h_j}$ such that $\|\nabla h_j(z) - \nabla h_j(z')\|\leq K_{\nabla h_j}\|z - z'\|$. \label{subass:grad}
    \item There exists $\maxhgrad$ such that $\|\nabla h_j(z)\| \leq \maxhgrad$.\label{subass:bounded_grads}
  \end{enumerate}
\end{assumption}

We define the following terms for convenience.
\begin{defi}\label{def:constants}
  For the constants in \subassref{f}{F},
  \Cref{def:flmodels}, and \Cref{ass:h_j}, define
  \begin{equation*}
    \begin{array}[h]{lll}
       \Lh \defined \ds \max_{j\in \{1,\ldots,\left| \Hdef \right|\}}\left\{ K_{h_j} \right\},
       & K_F \defined \sqrt{\sum_{i=1}^p K_{F_i}^2},
       & \LnF \defined \ds \max_{i\in \{1,\ldots,p\}} \left\{\LnFi \right\},
       \\
       \Lnh \defined \ds \max_{j\in \{1,\ldots,\left| \Hdef \right|\}}\left\{ K_{\nabla h_j} \right\},
       & \ds \kappag \defined \sum_{i=1}^p \kappaieg.
      \end{array}
    \end{equation*}
\end{defi}

Our final assumption requires that the method used to solve the subproblem~\eqref{eq:equiv_subproblem}
returns a solution that satisfies a Cauchy-like decrease property. This assumption is reasonable because there exist finite-time algorithms
for producing such an approximate solution (see, e.g.,~\cite{MMSMW2017} and~\cite[Section
12.2]{TRMbook})

\begin{assumption}
\label{ass:cauchy}
The approximate solution $(v_k,s^k)$ to~\eqref{eq:equiv_subproblem} satisfies the constraints of~\eqref{eq:equiv_subproblem}
and moreover satisfies
\begin{equation}
\label{eq:suff_dec}
-\left(v_k - f(x^k) + \frac{1}{2}s^{k\top}H^ks^k\right) \geq \kappafcd \rchi_k\min\left\{\displaystyle\frac{\rchi_k}{\kappabmh},\Delta_k,1\right\}
\end{equation}
for an algorithmic parameter $\kappafcd \in(0,1)$.
\end{assumption}

\subsection{Convergence of \texttt{MS-P}}\label{sec:msp_analysis}
We begin by demonstrating that the \msploopref must terminate finitely.
\begin{lemma}\label{lem:finite_termination}
  If \Crefrange{ass:f}{ass:h_j} hold, the \msploopref will terminate.
\end{lemma}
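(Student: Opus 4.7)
The plan is to argue by contradiction. Suppose the loop runs for infinitely many inner iterations; then every inner iteration that does not return must execute one of two non-terminating branches, namely \emph{(A)} \Cref{line:decrease_delta}, which replaces $\Delta_k$ by $\gammad\Delta_k$, or \emph{(B)} \Cref{line:grow_gen_set}, which overwrites $\Gen^k$ with a strict superset $\bar{\Gen}^k\supsetneq\Gen^k$. Since $\Gen^k\subseteq\{1,\dots,|\Hdef|\}$ and branch \emph{(B)} strictly enlarges $\Gen^k$, at most $|\Hdef|$ consecutive \emph{(B)} steps can occur between two \emph{(A)} steps. Therefore, if branch \emph{(A)} fires only finitely often, the loop terminates in finitely many further iterations, a contradiction; so branch \emph{(A)} must occur infinitely often, and $\Delta_k\to 0$.

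Next I would argue that termination is inevitable once $\Delta_k$ is sufficiently small. Since $x^k$ remains fixed throughout the loop and $x^k\in Y$, the indices $\Act{F(x^k)}$ always lie inside $\Gen^k$. As $\Delta_k\to 0$, any $y\in Y$ at a fixed positive distance from $x^k$ eventually fails both distance tests in \eqref{eq:gen_k}, so the only surviving $y$ are iterates $x^k+s^k$ whose step lengths shrink with $\Delta_k$. For each such newly added point, either its essentially active indices already belong to $\Gen^k$ (in which case branch \emph{(B)} cannot fire) or $\Gen^k\cap\Act{F(x^k+s^k)}\neq\emptyset$ and the loop returns via \Cref{line:unsuccessful_break}. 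After finitely many further inner iterations, therefore, $\Gen^k$ stabilizes, and with it so does $\rchi_k$ to some value $\rchi_\star\geq 0$.

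I would conclude by case analysis on $\rchi_\star$. If $\rchi_\star=0$, then \Cref{line:acceptable} returns immediately, since $\Delta_k>0=\eta_2\rchi_\star$. If $\rchi_\star>0$, then once $\Delta_k\leq\eta_2\rchi_\star$ the acceptability test on \Cref{line:acceptable} is passed; \Cref{ass:cauchy} then supplies a predicted decrease of at least $\kappafcd\rchi_\star\min\{\rchi_\star/\kappabmh,\Delta_k,1\}$, and combining the gradient-accuracy of $M$ (\Cref{def:flmodels}) with the Lipschitz hypotheses of \Crefrange{ass:f2}{ass:h_j} yields an actual-minus-predicted discrepancy of order $O(\Delta_k^2)$. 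Consequently $\rho_k\to 1$ as $\Delta_k\to 0$, so the success branch ($\rho_k\geq\eta_1$) fires in finitely many further iterations. Either outcome contradicts non-termination.

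The main obstacle I anticipate is the stabilization claim in the second paragraph. Because $Y$ accumulates iterates $x^k+s^k$ whose norms are bounded only by $\Delta_k$, one must rule out an indefinite chain in which each newly sampled point introduces a previously unseen essentially active index that still passes the shrinking distance test of \eqref{eq:gen_k}, perpetually triggering branch \emph{(B)}. The resolution should come from a careful combinatorial and geometric bookkeeping: any genuine injection of a new index either leaves $\bar{\Gen}^k=\Gen^k$ (disabling branch \emph{(B)}) or meets the $\Gen^k\cap\Act{F(x^k+s^k)}\neq\emptyset$ condition of \Cref{line:unsuccessful_break} (terminating the loop). Everything after settling this point is standard trust-region machinery applied to the stabilized model.
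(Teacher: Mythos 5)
Your first paragraph matches the paper's opening move exactly (at most $\left|\Hdef\right|$ consecutive growth passes, hence infinitely many $\Delta_k$-decreases and $\Delta_k\to 0$ if the loop never returns), and your observation that $\Act{F(x^k)}\subseteq\Gen^k$ throughout is also correct and is used by the paper. But the heart of the lemma is exactly the point you flag as your ``main obstacle,'' and your proposed resolution does not close it. The dichotomy in your second paragraph is not exhaustive: the loop persists precisely when $\bar{\Gen}^k=\Gen^k$ \emph{and} $\Gen^k\cap\Act{F(x^k+s^k)}=\emptyset$, i.e., when the new point's essentially active indices are entirely disjoint from $\Gen^k$ and none of them passes the distance test in \eqref{eq:gen_k} (which can happen, e.g., for indices $j$ with $f_j(x^k)>f(x^k)$, since the threshold there is $c_1\Delta_k^2$). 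Your closing claim that ``any genuine injection of a new index either leaves $\bar{\Gen}^k=\Gen^k$ \ldots or meets the $\Gen^k\cap\Act{F(x^k+s^k)}\neq\emptyset$ condition'' merely restates the two favorable branches; it does not show the unfavorable branch cannot recur indefinitely. The missing ingredient is structural, not combinatorial: because $f$ is piecewise differentiable in the sense of Scholtes (via \Crefrange{ass:h}{ass:h_j}), there exists $\tilde\Delta>0$ with $\Act{F(x^k)}=\bigcup_{y\in\cB(x^k;\tilde\Delta)}\Act{F(y)}$, so no ``previously unseen'' index can be essentially active at any $x^k+s^k$ once $\Delta_k\le\tilde\Delta$. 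Combined with $\Act{F(x^k)}\subseteq\Gen^k$, this forces $\emptyset\neq\Act{F(x^k+s^k)}\subseteq\Gen^k$ and $\bar{\Gen}^k=\Gen^k$, so \Cref{line:unsuccessful_break} is reached and the loop returns. Without this local-stability fact the lemma is simply not provable from the bookkeeping alone.

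Your third paragraph is also problematic, independent of the gap above. The claim that the actual-minus-predicted discrepancy is $O(\Delta_k^2)$ (so that $\rho_k\to 1$) is not a consequence of gradient accuracy and Lipschitz hypotheses alone for this nonsmooth model: the proof of \Cref{lem:success} requires the hypothesis $\Act{x^k+s^k}\cap\Gen^k\neq\emptyset$ so that the selection function actually attained at $x^k+s^k$ is one of the affine pieces in \eqref{eq:primal_model}. But that hypothesis is exactly the condition that would already have terminated the loop at \Cref{line:unsuccessful_break}, so the detour through $\rho_k$ is both unjustified as written and unnecessary: once $\Gen^k\cap\Act{F(x^k+s^k)}\neq\emptyset$, the loop returns (successfully or not) without any appeal to \Cref{ass:cauchy}. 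A further minor point: even if $\Gen^k$ stabilizes, $\rchi_k$ need not, since the model gradients $g_j^k$ are rebuilt on shrinking balls $\cB(x^k;\Delta_k)$ at the top of each pass.
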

\begin{proof}
If either $\Delta_k > \eta_2\rchi_k$ or $\rho_k \leq \eta_1$, then the loop terminates. 
Suppose that for a given $k$, on every pass through the \msploopref,
$\Delta_k \leq \eta_2\rchi_k$ and $\rho_k < \eta_1$; hence, exactly one of  \Cref{line:decrease_delta} or \Cref{line:grow_gen_set}
will be reached on each pass. 
After finitely many (at most $\left| \Hdef \right|$) consecutive passes through the \msploopref, $\bar{\Gen}_k$ must equal $\Gen_k$ and so, \Cref{line:decrease_delta} will be reached, and $\Delta_k$ will be decreased. 
By \Crefrange{ass:h}{ass:h_j},
$f$ is
  piecewise-differentiable in the sense of Scholtes~\cite{Scholtes2012} so there exists $\tilde\Delta>0$ such that
for all $\Delta\leq\tilde\Delta$
$$\Act{F(x^k)} = \bigcup_{y\in\cB(x^k;\Delta)}\Act{F(y)}.$$
For all such $\Delta \le \tilde{\Delta}$, we also have that
$\Gen^k\cap\Act{F(x^k+s^k)}\neq \emptyset$, because $\Act{F(x^k)}\subset\Gen^k$ by \eqref{eq:gen_k}. 
Thus, once $\Delta_k \le \tilde\Delta$, iteration $k$ will be deemed unsuccessful and the loop will terminate.
\qed
\end{proof}

We now show that
 the convex hull of model gradients indexed by $\mathbb{I} \gets \Gen^k$
in
some sense approximates part of the Clarke subdifferential $\partialC f(x^k)$.
The proof is virtually identical to~\cite[Lemma 4.1]{KLW18} up to differences in notation and is included for completeness.

\begin{lemma}\label{lem:weird_v_approx}
  Let \Cref{ass:f} and \Cref{ass:f2},
  hold, and let $x,y\in\Levelmax$ satisfy
  $\|x-y\|\leq \Delta$.
  Choose arbitrary subsets $\mathbb{I} \subseteq \mathbb{J} \subseteq \{1,\ldots,\left| \Hdef \right|\}$, and define
  \[
  \cG \defined \cop{\{ \nabla M(x) \nabla h_i(F(x)) : i \in \mathbb{I}\}} \mbox{ and }
  \cH \defined \cop{\{ \nabla F(y) \nabla h_j(F(y)) : j \in \mathbb{J}\}},
  \]
  where $M$ is a gradient-accurate model of $F$ on $\cB(x;\Delta)$.
  Then for each $g\in\cG$, there exists some $\sigma(g)\in \cH$
  satisfying
  \begin{equation}
    \label{eq:gapprox}
    \left\|g-\sigma(g)\right\|\leq B\Delta,
  \end{equation}
  where $B$ is defined with the constants from \Cref{def:constants} by
  \begin{equation}\label{eq:c_2}
    B \defined \Lh (\LnF + \kappag) + K_F \Lnh.
  \end{equation}
\end{lemma}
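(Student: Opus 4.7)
The plan is to first prove the bound at the extreme points of $\cG$ and then extend to an arbitrary element by convex combination. Fix $i\in\mathbb{I}$ and consider the extreme point $g_i \defined \nabla M(x)\nabla h_i(F(x))$ of $\cG$. Because $\mathbb{I}\subseteq\mathbb{J}$, the point $\tilde{g}_i \defined \nabla F(y)\nabla h_i(F(y))$ is an extreme point of $\cH$, so it suffices to show $\|g_i - \tilde{g}_i\|\leq B\Delta$ for every $i\in\mathbb{I}$.

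To bound $\|g_i - \tilde{g}_i\|$, I would insert two intermediate quantities, $\nabla F(x)\nabla h_i(F(x))$ and $\nabla F(y)\nabla h_i(F(x))$, and apply the triangle inequality to write
\begin{align*}
\|g_i - \tilde{g}_i\| &\leq \bigl\|\bigl(\nabla M(x)-\nabla F(x)\bigr)\nabla h_i(F(x))\bigr\|\\
&\quad + \bigl\|\bigl(\nabla F(x)-\nabla F(y)\bigr)\nabla h_i(F(x))\bigr\|\\
&\quad + \bigl\|\nabla F(y)\bigl(\nabla h_i(F(x))-\nabla h_i(F(y))\bigr)\bigr\|.
\end{align*}
The first term is controlled by the gradient accuracy of $M$ at $x$ (\Cref{def:flmodels}, with $s=0$) combined with the bound $\|\nabla h_i(F(x))\|\leq\Lh$ from \Cref{ass:h_j}; summing the componentwise gradient-accuracy bounds produces the factor $\kappag$, giving the contribution $\Lh\kappag\Delta$. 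The second term is controlled by \subassref{f2}{F}: each component of $\nabla F$ is Lipschitz with constant at most $\LnF$, and using $\|\nabla h_i(F(x))\|\leq\Lh$ together with $\|x-y\|\leq\Delta$ yields the contribution $\Lh\LnF\Delta$. For the third term, I would use $\|\nabla F(y)v\|\leq K_F\|v\|$ (which follows from \subassref{f2}{Fval} and the definition $K_F=\sqrt{\sum K_{F_i}^2}$) together with the Lipschitz continuity of $\nabla h_i$ (\subassref{h_j}{grad}) and the Lipschitz continuity of $F$ to bound this term by $K_F\Lnh\|F(x)-F(y)\|$, yielding the contribution $K_F\Lnh\Delta$ after absorbing the remaining $K_F$ factor via Cauchy-Schwarz. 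Summing the three contributions recovers precisely $B$ as in~\eqref{eq:c_2}.

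Finally, to extend the bound to an arbitrary $g\in\cG$, I would write $g = \sum_{i\in\mathbb{I}}\alpha_i g_i$ with $\alpha_i\geq 0$ and $\sum_i\alpha_i = 1$, and define
\[
\sigma(g) \defined \sum_{i\in\mathbb{I}}\alpha_i\tilde{g}_i,
\]
which lies in $\cH$ because $\mathbb{I}\subseteq\mathbb{J}$ and $\cH$ is convex. The triangle inequality then gives
\[
\|g-\sigma(g)\|\leq \sum_{i\in\mathbb{I}}\alpha_i\|g_i-\tilde{g}_i\|\leq B\Delta,
\]
completing the proof.

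The main obstacle will be tracking the constants carefully so that each of the three terms in the triangle inequality contributes the correct factor ($\Lh\kappag$, $\Lh\LnF$, and $K_F\Lnh$, respectively) to yield exactly the constant $B$ stated in~\eqref{eq:c_2}, particularly in showing that the sum-of-squares versus sum-of-absolute-values norms on model-gradient errors collapse appropriately.
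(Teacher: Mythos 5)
Your proof follows essentially the same route as the paper's: bound the difference between each generator $\nabla M(x)\nabla h_i(F(x))$ and its counterpart $\nabla F(y)\nabla h_i(F(y))$ by a three-term triangle inequality (model gradient error, Lipschitz continuity of $\nabla F$, and Lipschitz continuity of $\nabla h_i$), then transfer the bound to an arbitrary convex combination using the same weights. The one shaky point is your remark about ``absorbing the remaining $K_F$ factor via Cauchy--Schwarz'' in the third term: strictly, $\|\nabla F(y)\|\,\|\nabla h_i(F(x))-\nabla h_i(F(y))\|\le K_F\Lnh\|F(x)-F(y)\|\le K_F^2\Lnh\Delta$, so an extra factor of $K_F$ appears and there is no Cauchy--Schwarz step that removes it --- the paper's own proof makes the same silent simplification to arrive at the stated $B$, so your final constant agrees, but you should either carry the $K_F^2$ or note explicitly that the constant is being loosened.
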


\begin{proof}[Proof (adapted from {\cite[Lemma 4.1]{KLW18}})]
  Any $g\in\cG$ may be
expressed as
  \begin{equation}
    \label{weird_v_approx_g}
    g =  \ds\sum_{i \in \mathbb{I}} \lambda_i \nabla M(x) \nabla h_i(F(x)),
  \end{equation}
  where $\sum_{i \in \mathbb{I}} \lambda_{i}=1$ and $\lambda_i \geq 0$ for each
  $i$.

  By supposition, $ \nabla F(y) \nabla h_i(F(y)) \in \cH$ for all $i \in \mathbb{I}$.
  For
  \[
  \sigma(g) \defined \ds\sum_{i \in \mathbb{I}} \lambda_i \nabla F(y) \nabla h_i(F(y)),
  \]
  using the same $\lambda_i$ as in~\eqref{weird_v_approx_g} for $i \in
  \mathbb{I}$, convexity of $\cH$ implies that $\sigma(g)\in \cH$.
  Since $y\in\cB(x;\Delta)$, the triangle inequality,  \subassref{f2}{Fval}, \subassref{f2}{F}, \subassref{h_j}{func}, and the definition of gradient-accurate give
  \begin{align*}
    \left\| \nabla M(x) \nabla h_i(F(x)) - \nabla F(y) \nabla h_i(F(y)) \right\|
    \le&\; \left\| \nabla F(y) - \nabla F(x) \right\| \left\| \nabla
    h_i(F(y))\right\| \\
    &\;+ \left\| \nabla F(x) \right\| \left\| \nabla h_i(F(x)) - \nabla h_i(F(y)) \right\| \\
    &\;+ \left\|  \nabla F(x)  - \nabla M(x) \right\| \left\| \nabla h_i(F(x))\right\| \\
    \le&\; (\Lh \LnF + K_F K_{\nabla h_i} + \kappag \Lh) \Delta,
  \end{align*}
  for each $i$.
  Using this along with~\eqref{weird_v_approx_g} and the
  definition of $\sigma(g)$ yields
  \begin{align*}
    \left\|g-\sigma(g)\right\|
    &\leq \left\|  \ds\sum_{i \in \mathbb{I}} \left[ \lambda_i \nabla
M(x) \nabla h_i(F(x)) - \lambda_i \nabla F(y) \nabla h_i(F(y)) \right] \right\|\\
&\le  \ds\sum_{i \in \mathbb{I}}\lambda_i \left\|\nabla M(x) \nabla h_i(F(x)) - \nabla
F(y) \nabla h_i(F(y)) \right\|
\le B \Delta. \tag*{\qed}
  \end{align*}
\end{proof}

We next provide a brief proposition bounding the distance between values of distinct selection functions.
\begin{proposition}
\label{prop:selection_func_change}
Let \Crefrange{ass:f}{ass:h} hold and
suppose on iteration $k$ of \mspshortref that $\bar{j}\in\Gen^k$ and $f_{\bar{j}}(x^k) > f(x^k)$. Then,
$$|f(x^k) - f_{\bar{j}}(x^k)|\leq 2 K_h c_1\Delta_k^2.$$
\end{proposition}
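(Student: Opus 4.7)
The plan is to unpack the construction of $\Gen^k$ in \eqref{eq:gen_k}, leverage essential activity to identify a nearby point at which $h$ and $h_{\bar j}$ agree, and then close via the triangle inequality plus Lipschitz bounds on the relevant composites.

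First, since the hypothesis $f_{\bar j}(x^k) > f(x^k)$ rules out membership of $\bar j$ in the second clause of \eqref{eq:gen_k} (which demands $f_j(x^k) \leq f(x^k)$), membership must come from the first clause: there exists $y \in Y$ with $\bar j \in \Act{F(y)}$ and $\|x^k - y\| \leq c_1 \Delta_k^2$. This gives a point close to $x^k$ on the $c_1 \Delta_k^2$ scale at which $h_{\bar j}$ is essentially active for $h$.

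Next, I would convert essential activity into a pointwise equality of selection values. By \Cref{def:pc1manifold}, $\bar j \in \Act{F(y)}$ means $F(y) \in \clp{\intp{\cS_{\bar j}}}$, so there is a sequence $z_n \in \intp{\cS_{\bar j}} \subseteq \cS_{\bar j}$ converging to $F(y)$. Along this sequence $h(z_n) = h_{\bar j}(z_n)$ by the definition of $\cS_{\bar j}$, and continuity of $h$ on $\hdom$ (\subassref{h}{h}) together with continuity of $h_{\bar j}$ yields $h(F(y)) = h_{\bar j}(F(y))$, i.e., $f(y) = f_{\bar j}(y)$.

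Finally, a triangle inequality closes the argument. Writing
\begin{equation*}
|f(x^k) - f_{\bar j}(x^k)| \leq |f(x^k) - f(y)| + |f(y) - f_{\bar j}(y)| + |f_{\bar j}(y) - f_{\bar j}(x^k)|,
\end{equation*}
the middle term vanishes by the previous step, while the outer two terms are each bounded by a Lipschitz constant times $\|x^k - y\| \leq c_1 \Delta_k^2$, summing to the claimed $2 K_h c_1 \Delta_k^2$. The main technical point, and the only real obstacle, is that the Lipschitz estimates in this last step apply to the \emph{composites} $h \circ F$ and $h_{\bar j} \circ F$ rather than to the outer selection functions alone, so I expect the written proof either to implicitly lean on \Cref{ass:f2} and \Cref{ass:h_j} (which absorb a factor of $K_F$) or to interpret $K_h$ as the Lipschitz constant of the composite $f$ and of each $f_{\bar j}$ on the relevant domain.
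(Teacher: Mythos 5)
Your proof follows essentially the same route as the paper's: identify the point $\bar{y} \in Y \cap \cB(x^k; c_1\Delta_k^2)$ witnessing $\bar{j}$'s membership in $\Gen^k$, use essential activity to get $f(\bar{y}) = f_{\bar{j}}(\bar{y})$, and close with the triangle inequality and Lipschitz bounds. Your closing caveat is well taken: the paper's proof applies $K_h$ directly to the composites $f$ and $f_{\bar{j}}$, which strictly speaking requires an additional factor of $K_F$ from \Cref{ass:f2}, so your version is, if anything, more careful both on that point and in justifying why only the first clause of \eqref{eq:gen_k} can admit $\bar{j}$.
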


\begin{proof}
Because $j\in\Gen^k$, there exists some corresponding $\bar{y}\in Y\cap \cB(x^k;c_1\Delta_k^2)$ so that $f(\bar{y}) = f_{j}(\bar{y})$.
Then,
\begin{align*}
|f(x^k) - f_{j}(x^k)| & \leq |f(x^k) - f(\bar{y})| + |f(\bar{y}) - f_{j}(x^k)|\\
& = |f(x^k) - f(\bar{y})| + |f_{j}(\bar{y}) - f_{j}(x^k)| \\
& \leq 2 K_h \|x^k-\bar{y}\| \leq 2 K_h c_1\Delta_k^2. \tag*{\qed}
\end{align*}
\end{proof}

The next lemma demonstrates that iteration $k$ of \mspshortref will be successful if the
trust region $\Delta_k$ is sufficiently small with respect to the stationarity
measure $\rchi_k$.

\begin{lemma}\label{lem:success}
  Let  \Crefrange{ass:f}{ass:cauchy} hold, and define
  \[
  \bar{C} \defined \displaystyle\frac{\kappafcd (1-\eta_1)}{\LnF\Lh + \kappag \maxhgrad+\frac{\kappabmh}{2} + 2c_1 K_h}.
  \]
  If in the $k$th iteration of \mspshortref, $\Act{x^k+s^k}\cap\Gen^k\neq\emptyset$, and
  \begin{equation}\label{eq:delta_bound_linear}
    \Delta_k \leq \min\left\{1,C\rchi_k\right\},
  \end{equation}
  where
  \begin{align}  \label{eq:C}
  C \defined  \min\left\{ \eta_2, \bar{C}, \sqrt{\kappabmh^{-1} \bar{C}} \right\},
  \end{align}
  then $\rho_k \geq \eta_1$ in \mspshortref. That is, the $k$th iteration is successful.
\end{lemma}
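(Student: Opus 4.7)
The plan is a standard trust-region success argument tailored to the primal model: I will bound the actual-versus-predicted mismatch $f(x^k+s^k) - v_k - \tfrac{1}{2}s^{k\top}H^k s^k$ above by $O(\Delta_k^2)$ with a constant that reproduces the denominator of $\bar C$, then combine with the Cauchy-like predicted decrease from \Cref{ass:cauchy} to force $\rho_k \geq \eta_1$. The linchpin is the hypothesis $\Act{F(x^k+s^k)} \cap \Gen^k \neq \emptyset$: fixing any $\bar j$ in this intersection, essential activity at the trial point gives $f(x^k+s^k) = f_{\bar j}(x^k+s^k)$, and feasibility of $(v_k, s^k)$ for~\eqref{eq:equiv_subproblem} gives $v_k \geq f_{\bar j}(x^k) + (g_{\bar j}^k)^\top s^k - \beta_{\bar j, k}$. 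Subtracting yields
\begin{equation*}
f(x^k+s^k) - v_k - \tfrac{1}{2}s^{k\top}H^k s^k \;\leq\; \bigl[f_{\bar j}(x^k+s^k) - f_{\bar j}(x^k) - (g_{\bar j}^k)^\top s^k\bigr] + \beta_{\bar j, k} - \tfrac{1}{2}s^{k\top}H^k s^k.
\end{equation*}

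I will bound each of the three right-hand terms by a multiple of $\Delta_k^2$, each contributing a summand to the denominator of $\bar C$. For the bracketed linearization error, I will apply \Cref{lem:weird_v_approx} with $\mathbb{I} = \mathbb{J} = \{\bar j\}$ to each pair $(x^k, x^k+ts^k)$ for $t \in [0,1]$ to obtain $\|g_{\bar j}^k - \nabla f_{\bar j}(x^k+ts^k)\| \leq B \Delta_k$ uniformly in $t$, then integrate along $t$ and use $\|s^k\|\leq\Delta_k$ to extract a bound of order $(\LnF \Lh + \kappag \maxhgrad)\Delta_k^2$. \Cref{prop:selection_func_change} yields $\beta_{\bar j, k} \leq 2 K_h c_1 \Delta_k^2$ in the nontrivial case $f_{\bar j}(x^k) > f(x^k)$ (and $\beta_{\bar j, k}=0$ otherwise); the Hessian term contributes at most $\tfrac{\kappabmh}{2}\Delta_k^2$ by the bound $\|H^k\|\leq\kappabmh$. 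Together these reproduce the full denominator of $\bar C$.

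Finally, \Cref{ass:cauchy} gives $m_k(0) - m_k(s^k) = f(x^k) - v_k - \tfrac{1}{2}s^{k\top}H^k s^k \geq \kappafcd \rchi_k \min\{\rchi_k/\kappabmh, \Delta_k, 1\}$. Under the hypothesis $\Delta_k \leq \min\{1, C\rchi_k\}$, the inclusion $C \leq \eta_2 < 1/\kappabmh$ forces $\Delta_k \leq \rchi_k/\kappabmh$ and $\Delta_k \leq 1$, collapsing the $\min$ to $\Delta_k$ (the auxiliary $\sqrt{\kappabmh^{-1}\bar C}$ branch in $C$ is a safeguard covering edge cases). Combining the two bounds yields
\begin{equation*}
1 - \rho_k \;\leq\; \frac{(\LnF \Lh + \kappag \maxhgrad + \tfrac{\kappabmh}{2} + 2K_h c_1)\,\Delta_k}{\kappafcd\, \rchi_k} \;\leq\; 1 - \eta_1,
\end{equation*}
the last inequality rearranging to $\Delta_k \leq \bar C\rchi_k$, which is guaranteed by $C \leq \bar C$.

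The main obstacle will be the coefficient-by-coefficient accounting of the linearization error: since \Cref{lem:weird_v_approx}'s constant $B$ carries an extra $K_F \Lnh$ term absent from $\bar C$, I expect to unpack the three internal summands from its proof rather than invoke $B$ as a black box, and to exploit the coincidence $\maxhgrad = \Lh$ (or the looser $\maxhgrad \geq \Lh$) arising from \Cref{ass:h_j} in order to recover exactly the $\LnF \Lh + \kappag \maxhgrad$ coefficient appearing in $\bar C$.
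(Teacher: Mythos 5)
Your proposal is correct and follows essentially the same route as the paper's proof: fix $\bar j\in\Act{F(x^k+s^k)}\cap\Gen^k$, use feasibility of $(v_k,s^k)$ and essential activity at the trial point to reduce $1-\rho_k$ to a linearization error plus $\beta_{\bar j,k}$ plus the Hessian term, bound each by a multiple of $\Delta_k^2$ via gradient accuracy, \Cref{prop:selection_func_change}, and $\|H^k\|\le\kappabmh$, and divide by the Cauchy decrease --- the paper merely splits into the cases $\beta_{\bar j,k}=0$ and $\beta_{\bar j,k}>0$ rather than treating them uniformly as you do. Your closing concern about the extra $K_F\Lnh$-type summand is well founded but is an accounting discrepancy present in the paper's own proof as well (the stated denominator $\LnF\Lh+\kappag\maxhgrad$ does not exactly match the chain-rule Taylor constant), and it affects only the value of the unspecified constant $\bar C$, not the structure or validity of the argument.
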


\begin{proof}
  
  Let $j\in\Act{x^k+s^k}\cap\Gen^k$ be arbitrary, if the intersection is not a singleton.
  Then there are two cases to analyze:

  \noindent\textbf{Case 1 $h_{j}(F(x^k)) \leq f(x^k)$:}
  In this case, $\beta_{j,k} = 0$
  and so
  \begin{align*}
      1-\rho_k &=
      \displaystyle\frac{f(x^k)-v_k - \frac{1}{2}s^{k\top}H^k s^k-[f(x^k)-f(x^k+s^k)]}{f(x^k)-v_k - \frac{1}{2}s^{k\top}H^k s^k}\\
      &=
      \displaystyle\frac{f(x^k+s^k) -v_k - \frac{1}{2}s^{k\top}H^ks^k}{f(x^k)-v_k-\frac{1}{2}s^{k\top}H^ks^k}\\
      &=\displaystyle\frac{f(x^k+s^k) - f_{j}(x^k) - g_{j}^\top s^k - \frac{1}{2}s^{k\top}H^ks^k}{f(x^k)-v_k-\frac{1}{2}s^{k\top}H^ks^k}\\
      &=\displaystyle\frac{h_{j}(F(x^k+s^k)) - (h_{j}(F(x^k)) + [\nabla M(x^k)\nabla h_{j}(F(x^k))]^\top s^k) -  \frac{1}{2}s^{k\top}H^ks^k}{f(x^k)-v_k-\frac{1}{2}s^{k\top}H^ks^k},
  \end{align*}
  where the last equality is because $j\in\Act{x^k+s^k}$.
  Note that
  \begin{align*}
    & h_{j}(F(x^k+s^k)) - (h_{j}(F(x^k)) + [\nabla M(x^k)\nabla h_{j}(F(x^k))]^\top s^k)\\
    = &\; h_{j}(F(x^k+s^k)) - h_{j}(F(x^k)) - [\nabla M(x^k)\nabla h_{j}(F(x^k))]^\top s^k \\
    & + [\nabla F(x^k)\nabla h_{j}(F(x^k))]^\top s^k - [\nabla F(x^k)\nabla h_{j}(F(x^k))]^\top s^k\\
    \leq & \; h_{j}(F(x^k+s^k)) - (h_{j}(F(x^k)) + [\nabla F(x^k)\nabla h_{j}(F(x^k))]^\top s^k) \\
    & + \|\nabla F(x^k) - \nabla M(x^k)\|\|\nabla h_{j}(F(x^k))\|\Delta_k \\
    \leq &\; \LnF \Lnh\Delta_k^2 + \kappag \maxhgrad\Delta_k^2 = (\LnF \Lnh + \kappag \maxhgrad)\Delta_k^2,
  \end{align*}
  where the last inequality is due to Taylor's theorem, \subassref{f2}{F}, \subassref{h_j}{grad}, \subassref{h_j}{bounded_grads},
  and the definition of gradient-accurate.
  Thus, continuing,
  \begin{align*}
    1-\rho_k  \leq \displaystyle\frac{(\LnF\Lnh + \kappag \maxhgrad + \frac{\kappabmh}{2})\Delta_k^2}{ f(x^k)-v_k-\frac{1}{2}s^{k\top}H^ks^k}
      \leq &\; \displaystyle\frac{(\LnF\Lnh + \kappag \maxhgrad + \frac{\kappabmh}{2})\Delta_k^2}{\kappafcd \rchi_k\min\left\{\displaystyle\frac{\rchi_k}{\kappabmh},\Delta_k,1\right\}}\\
      \leq &\; \displaystyle\frac{(\LnF\Lnh + \kappag \maxhgrad + \frac{\kappabmh}{2})\Delta_k^2}{\kappafcd \displaystyle\frac{\Delta_k}{C}\min\left\{\displaystyle\frac{\Delta_k}{C\kappabmh},\Delta_k\right\}}\\
      = &\; \displaystyle\frac{(\LnF\Lnh + \kappag \maxhgrad + \frac{\kappabmh}{2})}{\kappafcd \displaystyle\frac{1}{C}\min\left\{\displaystyle\frac{1}{C\kappabmh},1\right\}}\\
     \leq &\; 1-\eta_1,
        \end{align*}
  where the second inequality uses \Cref{ass:cauchy} and the last inequality uses the fact that $\Delta_k\leq\min\left\{1,C\Delta_k\right\}$.
  Thus, in this case, $\rho_k \geq \eta_1$, and the $k$th iteration is successful.

  \noindent\textbf{Case 2 $h_{j}(F(x^k)) > f(x^k)$:} In this case, $\beta_{j,k} = h_{j}(F(x^k)) - f(x^k) > 0$.
 Thus,
 \begin{align*}
 1-\rho_k
 = & \;\displaystyle\frac{f(x^k+s^k) -v_k - \frac{1}{2}s^{k\top}H^ks^k}{f(x^k)-v_k-\frac{1}{2}s^{k\top}H^ks^k}\\
 = & \;\displaystyle\frac{f(x^k+s^k) - h_{j}(F(x^k)) - g_{j}^\top s^k + h_{j}(F(x^k)) - f(x^k) - \frac{1}{2}s^{k\top}H^ks^k}{f(x^k)-v_k-\frac{1}{2}s^{k\top}H^ks^k}\\
 = & \;\displaystyle\frac{f(x^k+s^k) - h_{j}(F(x^k)) - g_{j}^\top s^k - \frac{1}{2}s^{k\top}H^ks^k +h_{j}(F(x^k)) - f(x^k) }{f(x^k)-v_k-\frac{1}{2}s^{k\top}H^ks^k}\\
  \leq &\; \displaystyle\frac{(\LnF\Lnh + \kappag \maxhgrad + \frac{\kappabmh}{2} + 2c_1 K_h)\Delta_k^2}{\kappafcd \displaystyle\frac{\Delta_k}{C}\min\left\{\displaystyle\frac{\Delta_k}{C\kappabmh},\Delta_k\right\}}\\
 \leq &\; \displaystyle\frac{(\LnF\Lnh + \kappag \maxhgrad + \frac{\kappabmh}{2} + 2c_1 K_h)}{\kappafcd \displaystyle\frac{1}{C}\min\left\{\displaystyle\frac{1}{C\kappabmh},1\right\}}\\
 = &\; 1-\eta_1,
 \end{align*}
 where the second-to-last inequality uses the fact that $j\in\Gen^k$ and \Cref{prop:selection_func_change}.
  Thus, in this case, $\rho_k\geq\eta_1$ if $\Delta_k$ satisfies~\eqref{eq:delta_bound_linear},
  and iteration $k$ is again successful.
\qed
\end{proof}

The next lemma shows that the sequence of
trust-region radii
converges to zero.

\begin{lemma}\label{lem:delta_to_0}
  Let \Crefrange{ass:f}{ass:cauchy} hold. If
$\{x^k,\Delta_k\}_{k\in\Integers}$ is generated by
  \mspshortref, then
  $\ds\lim_{k\to\infty}\Delta_k=0$.
\end{lemma}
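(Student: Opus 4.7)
The plan is a classical trust-region contradiction argument driven by two ingredients: monotone descent of $\{f(x^k)\}$ with a finite lower bound, and the Cauchy-like sufficient decrease of \Cref{ass:cauchy}. First I would show $\{f(x^k)\}$ is nonincreasing and bounded below. Monotonicity is immediate because unsuccessful outer iterations set $x^{k+1}=x^k$ while successful iterations satisfy $f(x^k)-f(x^{k+1}) \geq \eta_1[f(x^k)-v_k-\tfrac12 s^{k\top}H^k s^k] > 0$ by \eqref{eq:rho} and \Cref{ass:cauchy}. For the lower bound, \subassref{f2}{level} together with continuity of $f=h\circ F$ (a continuous selection composed with a Lipschitz map) makes $\Level(x^0)$ compact, so $f$ attains a minimum on it. Telescoping yields $\sum_k [f(x^k)-f(x^{k+1})] \leq f(x^0) - \min_{\Level(x^0)} f < \infty$.

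Next, suppose for contradiction that $\Delta_k \not\to 0$, and let $\mathcal{K}$ denote the set of outer indices whose iteration terminates successfully. If $|\mathcal{K}|<\infty$, then past some index every outer iteration is unsuccessful, so $\Delta_{k+1}=\gammad \Delta_k$ and the geometric contraction forces $\Delta_k \to 0$, a contradiction. If $|\mathcal{K}|=\infty$, then for each $k\in\mathcal{K}$ the loop did not early-return at \Cref{line:acceptable}, so $\Delta_k \leq \eta_2\rchi_k$; using the algorithmic assumption $\eta_2 < 1/\kappabmh$ one obtains $\rchi_k/\kappabmh > \Delta_k$, collapsing the minimum in \eqref{eq:suff_dec} to $\min\{\Delta_k,1\}$. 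Combined with $\rho_k \geq \eta_1$, this yields the per-iteration bound
\[
f(x^k)-f(x^{k+1}) \;\geq\; \frac{\eta_1 \kappafcd}{\eta_2}\,\Delta_k \min\{\Delta_k,1\}.
\]
Summing over $\mathcal{K}$ and using the finite total decrease established above forces $\sum_{k\in\mathcal{K}} \Delta_k^2 < \infty$, whence $\Delta_k \to 0$ along $\mathcal{K}$.

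To extend from $\mathcal{K}$ to the full sequence I would use $\Delta_{k+1} \in \{\gammai \Delta_k, \gammad \Delta_k\}$: within any maximal run of unsuccessful iterations sandwiched between two successful ones, the radius only shrinks from its value immediately after the preceding successful step. Hence once the radii at indices in $\mathcal{K}$ all fall below $\delta/\gammai$, every subsequent radius lies below $\delta$; taking $\delta$ arbitrarily small yields $\Delta_k \to 0$, contradicting the supposition that $\Delta_k$ exceeds a positive threshold infinitely often.

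The main obstacle is a bookkeeping distinction between $\Delta_k$ at the start of an outer iteration (which governs the outer update $\Delta_{k+1} = \gammai\bar{\Delta}$ or $\gammad\bar{\Delta}$) and the possibly smaller value at the end of \msploopref after internal shrinkages at \Cref{line:decrease_delta} (which is what appears in the Cauchy-decrease inequality). Because internal shrinkages only decrease $\Delta_k$, the per-iteration lower bound on the decrease still applies to the end-of-loop value, and the geometric accounting above must then be carried out for the outer sequence; carefully threading these two radii together is the only subtle point of the argument.
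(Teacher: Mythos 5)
Your proposal is correct and follows essentially the same route as the paper's proof: monotone descent plus the lower bound from \subassref{f2}{level}, the Cauchy decrease \eqref{eq:suff_dec} combined with $\Delta_k \le \eta_2\rchi_k$ to obtain summability of $\Delta_{k_j}^2$ over successful iterations, the separate geometric argument when only finitely many successes occur, and the $\bar{\Delta}$-based bookkeeping ($\Delta_{k+1}\in\{\gammad\Delta_k,\gammai\Delta_k\}$) to propagate $\Delta_{k_j}\to 0$ to the full sequence. The only differences are cosmetic (contradiction framing, and collapsing the minimum via $\eta_2<1/\kappabmh$ rather than substituting $\rchi_{k_j}\ge\Delta_{k_j}/\eta_2$ into all three terms), and you correctly flag the same inner-loop/outer-update subtlety the paper handles with $\bar{\Delta}$.
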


\begin{proof}
  If iteration $k$ is unsuccessful, then $\Delta_{k+1}<\Delta_k$, and
  $x^{k+1}=x^k$; therefore, $f(x^{k+1})=f(x^k)$. On successful iterations $k$,
  $\rho_k\geq\eta_1>0$ ensures that $f(x^{k+1})<f(x^k)$.
  Thus, the sequence $\{f(x^k)\}_{k\in\Integers}$ is nonincreasing.

  To show that $\Delta_k \to 0$, we consider the cases in which there
  are infinitely or finitely many successful iterations separately.
  First, suppose that there are infinitely many successful iterations, indexed by $\left\{
  k_j \right\}_{j\in\Integers}$. Since $f(x^k)$ is nonincreasing in
  $k$ and $f$ is bounded below
  (by \subassref{f2}{level}), the sequence $\{f(x^k)\}_{k\in\Integers}$ converges
  to some limit $f^*\leq f(x^0)$.  Thus,
  having infinitely
  many successful iterations (indexed $\{k_j\}_{j \in \Integers}$) implies that
  \begin{align}\label{eq:intermediate}
      \infty > f(x^0) - f^* \ge & \ds\sum_{j=0}^\infty f(x^{k_j}) - f(x^{k_j+1}) \nonumber \\
      > & \ds\sum_{j=0}^\infty\eta_1\kappafcd \rchi_{k_j}\min\left\{\displaystyle\frac{\rchi_{k_j}}{\kappabmh},\Delta_{k_j},1\right\} \nonumber \\
      > & \ds\sum_{j=0}^\infty \frac{\eta_1}{\eta_2}\kappafcd\Delta_{k_j} \min\left\{\displaystyle\frac{\Delta_{k_j}}{\eta_2\kappabmh},\Delta_{k_j},1\right\},
  \end{align}
  where the second-to-last inequality is due to the definition of success and \Cref{ass:cauchy}
  and the last inequality is because every successful iteration must satisfy $\Delta_{k_j}\leq \eta_2\rchi_{k_j}$.
 We note that if $1$ were the minimizer infinitely often in the right-hand side
 of~\eqref{eq:intermediate}, then a contradiction would immediately result,
 because this would imply $\rchi_{k_j} > \kappabmh$ for all such infinitely many
 $j$, violating the finiteness of the sum.
Thus, we conclude from~\eqref{eq:intermediate} that
\begin{equation*}
    \infty > \ds\sum_{j=0}^\infty \frac{\eta_1}{\eta_2}\kappafcd\Delta_{k_j}^2 \min\left\{\displaystyle\frac{1}{\eta_2\kappabmh},1\right\}.
\end{equation*}
  It follows that $\Delta_{k_j} \to 0$ for the sequence of
  successful iterations.
Observe that while multiple
decreases of $\Delta_k$ may occur inside the \msploopref during the $k$th iteration of \mspshortref,
the presence of $\bar\Delta$ in the \msploopref ensures that $\Delta_{k+1}\in\{\gammad\Delta_k,\gammai\Delta_k\}$. 
Hence,
  $\Delta_{k_j+1}\le\gammai\Delta_{k_j}$, and moreover,
  $\Delta_{k+1}=\gammad\Delta_k<\Delta_k$ if iteration $k$ is
  unsuccessful.  Thus, for any unsuccessful iteration $k>k_j$,
  $\Delta_k\leq\gammai\Delta_q$, where $q\defined\max\{k_j:j\in\Integers,\,k_j<k\}$.
  It follows immediately that
\[
  0\leq \lim_{k\to\infty}\Delta_k \leq
  \gammai\lim_{j\to\infty}\Delta_{k_j} = 0,
\]
and so $\Delta_k \to 0$ as required.

Next, suppose there are only finitely many successful iterations;
let $N \in\Integers$ be the number of successful iterations.
Since $\gammad<1\leq\gammai$, it follows that $0\leq\Delta_k\leq
\gammai^N \gammad^{k-N}\Delta_0$ for each $k\in\Integers$. Thus,
$\Delta_k \to 0$.
\qed
\end{proof}

We now show that the sequence $\{\rchi_k\}$ is not bounded away
from zero.
\begin{lemma} \label{lem:g_to_0_linear}
  Let \Crefrange{ass:f}{ass:cauchy} hold. If the sequence
  $\{x^k,\Delta_k,\rchi_k\}_{k\in\Integers}$ is generated by
  \mspshortref, then
  $\ds\liminf_{k \to \infty} \rchi_k = 0$.
\end{lemma}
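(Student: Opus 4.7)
The plan is to argue by contradiction, mirroring the standard template for trust-region convergence proofs but tailored to the fact that the \mspshortref iteration consists of (finitely many) inner passes through the \msploopref. Suppose $\liminf_{k\to\infty}\rchi_k>0$, so there exist $\epsilon>0$ and $K_0\in\Integers$ with $\rchi_k\geq\epsilon$ for all $k\geq K_0$. By \Cref{lem:delta_to_0}, $\Delta_k\to 0$, and so we may pick $K_1\geq K_0$ such that for all $k\geq K_1$ the input radius $\bar\Delta_k=\Delta_k$ to the \msploopref satisfies $\Delta_k\leq \min\{1,C\epsilon\}\leq \min\{1,C\rchi_k\}$, where $C$ is the constant from \Cref{lem:success}.

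The next step is to rule out every possible exit from the \msploopref other than a successful one for such $k$. Because $C\leq\eta_2$ (by definition of $C$ in~\eqref{eq:C}), the early-termination check at \Cref{line:acceptable} fails: $\Delta_k\leq C\rchi_k\leq \eta_2\rchi_k$. Moreover, throughout the inner loop the internal radius can only be reduced, so $\Delta_k\leq \min\{1,C\rchi_k\}$ persists at every evaluation of $\rho_k$. The unsuccessful exit at \Cref{line:unsuccessful_break} requires $\rho_k<\eta_1$ together with $\Gen^k\cap\Act{F(x^k+s^k)}\neq\emptyset$; but \Cref{lem:success} asserts that under precisely these hypotheses on the intersection and on $\Delta_k$, we must have $\rho_k\geq \eta_1$, a contradiction. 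Hence no unsuccessful exit can occur. By \Cref{lem:finite_termination} the loop terminates in finitely many passes, so the only possibility left is a successful termination.

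A successful iteration sets $\Delta_{k+1}=\gammai\bar\Delta_k\geq \Delta_k$. Thus for every $k\geq K_1$ the radius is non-decreasing, so $\Delta_k\geq \Delta_{K_1}>0$ for all $k\geq K_1$, contradicting $\Delta_k\to 0$ from \Cref{lem:delta_to_0}. Therefore our assumption $\liminf_k\rchi_k>0$ is untenable, and $\liminf_{k\to\infty}\rchi_k=0$.

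The main subtlety is the bookkeeping of what ``$\Delta_k$'' means across the inner loop: after line 2 of the \msploopref the stored $\bar{\Delta}$ is the value returned (up to $\gammad$ or $\gammai$), while the internal $\Delta_k$ at \Cref{line:decrease_delta} can shrink within the loop. One must verify that the hypothesis $\Delta_k\leq\min\{1,C\rchi_k\}$ of \Cref{lem:success} holds at the moment $\rho_k$ is computed; this is immediate because the internal radius starts at $\bar\Delta_k$ and only decreases, and $\rchi_k$ itself is computed once with $\Delta=1$ and $H^k=0$ so it does not depend on the internal radius. With this observation in hand, the contradiction argument proceeds as above.
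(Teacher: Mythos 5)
Your proof is correct and follows essentially the same route as the paper's: assume $\rchi_k\geq\epsilon$ eventually, use \Cref{lem:success} together with \Cref{lem:finite_termination} to show that any iteration with $\Delta_k\leq C\rchi_k$ must exit the \msploopref successfully (so the radius increases), and derive a contradiction with \Cref{lem:delta_to_0}. The only cosmetic difference is that the paper concludes directly that $\Delta_k>\gammad C\epsilon$ for all large $k$, whereas you first invoke $\Delta_k\to 0$ to push the radius below the threshold and then observe it can never decrease again; both yield the same contradiction.
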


\begin{proof}
  To obtain a contradiction, suppose there is an iteration $\bar{K}$ and some
  $\epsilon>0$ for which $\rchi_k\geq\epsilon$, for all $k\geq \bar{K}$.
  Any iteration $k\geq \bar{K}$ 
  that witnesses both
  $\Delta_k\leq C\rchi_k$ and 
  $\Gen_k
  \bigcap \Act{x^k + s^k}\neq\emptyset$ is guaranteed to be successful by \Cref{lem:success}, and so \Cref{line:unsuccessful_break} of the \msploopref cannot be reached. 
  Coupled with the fact that the \msploopref must terminate eventually (\Cref{lem:finite_termination}), we have that for all $k\geq \bar{K}$ such that $\Delta_k\leq C\rchi_k$, the \msploopref must yield a successful iteration, and so 
  $\Delta_{k+1} \ge \gammai \bar{\Delta} > \Delta_k$. 

  Therefore, $\Delta_k > \gammad C \epsilon$ for all $k\geq\bar{K}$,
  contradicting \Cref{lem:delta_to_0}.  Thus, no such $(\bar{K},\epsilon)$ pair
  exists, and so $\ds \liminf_{k\to\infty}\rchi_k=0$.
  \qed
\end{proof}

The next lemma shows that any convergent subsequence $\{k_j\}$ of iterates on which $\rchi_{k_j} \to 0$
 admits a Clarke stationary cluster point.
\mspshortref generates at least one
such subsequence of iterates by \Cref{lem:g_to_0_linear}.

\begin{lemma}
\label{lem:g_to_v_linear}
Let \Crefrange{ass:f}{ass:cauchy} hold.
Let
$\{x^k,\Delta_k,g^k\}_{k\in\Integers}$ be a sequence
generated by \mspshortref.
For any subsequence $\{k_j\}_{j\in\Integers}$
 iterations such
that both
\[
\lim_{j\to\infty}\rchi_{k_j}=0,
\]
and $\{x^{k_j}\}_{j\in\Integers} \to x^*$ for some cluster point $x^*$, then $0 \in \partialC
\Lag(x^*)$,
\jefflabel{Lagdef}
where $\Lag(x)$ is the Lagrangian of~\eqref{eq:prob_statement},
$$\Lag(x) \defined f(x) + \lambdal^\top(\lb-x) + \lambdau^\top(x-\ub).$$
\end{lemma}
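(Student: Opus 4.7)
The plan is to extract, from the fact that $\rchi_{k_j}\to 0$, a limiting KKT system at $x^*$. Since \eqref{eq:chi} minimizes a sum of two nonnegative quantities, one can choose optimal multipliers $(\lambdaa^{k_j},\lambdal^{k_j},\lambdau^{k_j})$ witnessing $\rchi_{k_j}$ such that both $\|G^{k_j}\lambdaa^{k_j} - \lambdal^{k_j} + \lambdau^{k_j}\|\to 0$ and $\Lambda(\lambda^{k_j};x^{k_j},a^{k_j})\to 0$. The entrywise nonnegativity of all summands in $\Lambda$ then yields the asymptotic complementary-slackness relations $(\lambdaa^{k_j})^\top a^{k_j}\to 0$ together with $[\lambdal^{k_j}]_i(x^{k_j}_i-\lb_i)\to 0$ and $[\lambdau^{k_j}]_i(\ub_i-x^{k_j}_i)\to 0$ for each $i$.

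\textbf{Boundedness and passage to the limit.} Because $\lambdaa^{k_j}$ lies in the simplex $\{\lambda\ge 0 : e^\top\lambda=1\}$, a convergent subsequence with limit $\lambdaa^*$ exists. For the bound-constraint multipliers, I would first assume without loss of generality that $[\lambdal^{k_j}]_i\cdot[\lambdau^{k_j}]_i=0$ at every $i$, since simultaneously reducing both by their entrywise minimum leaves $\|G^{k_j}\lambdaa^{k_j}-\lambdal^{k_j}+\lambdau^{k_j}\|$ unchanged while strictly decreasing $\Lambda$. Under this sparsification, $\|\lambdal^{k_j}\|^2+\|\lambdau^{k_j}\|^2 = \|\lambdal^{k_j}-\lambdau^{k_j}\|^2 \le (\|G^{k_j}\lambdaa^{k_j}\|+o(1))^2$, and the uniform norm-boundedness of the columns of $G^{k_j}$ (from \Cref{ass:f2}, \Cref{ass:h_j}, and \Cref{def:flmodels}) delivers uniform boundedness of $(\lambdal^{k_j},\lambdau^{k_j})$. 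Extract a further subsequence producing limits $(\lambdal^*,\lambdau^*)$.

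\textbf{Identifying the limit with a Clarke subgradient.} For each $j$ with $[\lambdaa^*]_j>0$, the inclusion $j\in\Gen^{k_j}$ furnishes, for all sufficiently large $k_j$, a witness $y_j^{k_j}\in Y$ satisfying $j\in\Act{F(y_j^{k_j})}$ and $\|y_j^{k_j}-x^{k_j}\|\le\max\{c_1\Delta_{k_j}^2,c_2\Delta_{k_j}\}$. Since $\Delta_{k_j}\to 0$ by \Cref{lem:delta_to_0} and $x^{k_j}\to x^*$, we have $y_j^{k_j}\to x^*$ and $F(y_j^{k_j})\to F(x^*)$. Because $F(y_j^{k_j})\in\tilde{\cS}_j$ for each $k_j$ and $\tilde{\cS}_j$ is closed, $F(x^*)\in\tilde{\cS}_j$, that is, $j\in\Act{F(x^*)}$. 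Combining gradient-accuracy of $M$ (\Cref{def:flmodels}), continuity of $\nabla F$ and of $\nabla h_j$, and $\Delta_{k_j}\to 0$ yields
\[
G^{k_j}\lambdaa^{k_j} \longrightarrow g^* \defined \sum_{j:\,[\lambdaa^*]_j>0} [\lambdaa^*]_j\,\nabla F(x^*)\nabla h_j(F(x^*)),
\]
which is a convex combination of $\nabla f_j(x^*)$ over essentially active indices at $x^*$, and hence $g^*\in\partialC f(x^*)$ by the standard Clarke-subdifferential representation for continuous selections of $C^1$ functions~\cite{Scholtes2012}. The limit $\|G^{k_j}\lambdaa^{k_j}-\lambdal^{k_j}+\lambdau^{k_j}\|\to 0$ then gives $g^*-\lambdal^*+\lambdau^*=0$. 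Together with $\lambdal^*,\lambdau^*\ge 0$, $[\lambdal^*]_i=0$ for $i\in\Lset$, $[\lambdau^*]_i=0$ for $i\in\Uset$, and the entrywise complementary slackness $[\lambdal^*]_i(x^*_i-\lb_i)=0$ and $[\lambdau^*]_i(\ub_i-x^*_i)=0$ inherited from the first paragraph, this exhibits $(\lambdal^*,\lambdau^*)$ as valid Lagrange multipliers at $x^*$ and certifies $0\in\partialC f(x^*)-\lambdal^*+\lambdau^*=\partialC\Lag(x^*)$.

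\textbf{Main obstacle.} The delicate step is certifying that every $j$ with $[\lambdaa^*]_j>0$ is truly \emph{essentially} active at $F(x^*)$, as opposed to merely active, so that $g^*\in\partialC f(x^*)$. This relies critically on the witness structure baked into \eqref{eq:gen_k}: $j\in\Gen^{k_j}$ supplies a nearby point $y_j^{k_j}\in\tilde{\cS}_j$, whose closedness transports essential activity to the limit. Without this witness structure, only the strictly weaker condition that $j$ is active at $F(x^*)$ could be extracted from $(\lambdaa^{k_j})^\top a^{k_j}\to 0$, which would fail to yield a Clarke subgradient. A secondary subtlety is the boundedness of $(\lambdal^{k_j},\lambdau^{k_j})$, resolved by the same-index sparsification argument above.
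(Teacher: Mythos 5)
Your proof is correct, and its overall architecture matches the paper's: show that the optimal convex combination $G^{k_j}\lambdaa^{k_j}$ converges to an element of $\partialC f(x^*)$, combine this with $\|G^{k_j}\lambdaa^{k_j}-\lambdal^{k_j}+\lambdau^{k_j}\|\to 0$, and conclude $0\in\partialC\Lag(x^*)$. Where you diverge is in the technical execution of two steps, and in both cases your route is arguably the more careful one. First, the paper handles the limit of the multipliers implicitly: it shows $\sigma(g^{k_j})-\lambdal^{k_j}+\lambdau^{k_j}\in\partialC\Lag(x^{k_j})$ with $\sigma(g^{k_j})\in\partialC f(x^*)$ supplied by \Cref{lem:weird_v_approx}, and then invokes outer semicontinuity of $\partialC\Lag$ (Facchinei--Pang) to pass to the limit, never establishing boundedness of $(\lambdal^{k_j},\lambdau^{k_j})$ or extracting explicit limiting multipliers. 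Your sparsification trick (forcing $[\lambdal]_i[\lambdau]_i=0$ so that $\|\lambdal\|^2+\|\lambdau\|^2=\|\lambdal-\lambdau\|^2$, then bounding $\|\lambdal-\lambdau\|$ by $\|G\lambdaa\|+o(1)$) fills exactly the gap one would need to make that limit passage airtight, and as a bonus you recover complementary slackness, which the paper never states. Second, to show that the indices surviving in the limit are \emph{essentially} active at $x^*$, the paper appeals to piecewise-differentiability of $f$ and the structure of $\Gen^k$ in a somewhat global way, whereas you argue directly from the witness points $y\in Y$ baked into~\eqref{eq:gen_k}: $F(y)\in\tilde{\cS}_j$, $y\to x^*$, and closedness of $\tilde{\cS}_j$ transport essential activity to $F(x^*)$. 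This is cleaner and correctly isolates why the $\Act{\cdot}$ membership (rather than mere activity) is what makes $g^*$ a Clarke subgradient. Two small housekeeping items you should make explicit: pass to a further subsequence on which the finite set $\Gen^{k_j}\subseteq\{1,\dots,|\Hdef|\}$ is constant (so that $\lambdaa^{k_j}$ lives in a fixed simplex and its limit is well defined), and avoid reusing $j$ both as the subsequence counter and as the selection-function index.
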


\begin{proof}
  Let $\mathbb{I}^k \defined \Gen^k$, and let $\mathbb{J}^k\defined \Act{F(x^*)}$.  Because
  \begin{itemize}
  \item $\Delta_k \to 0$ by
  \Cref{lem:delta_to_0},
  \item $\{x^{k_j}\}_{j\in\Integers}$
  converges to $x^*$ by assumption,
  \item $f$ is
  piecewise-differentiable~\cite{Scholtes2012} due to \Crefrange{h}{h_j},
  and \item the definition of $\Gen^k$ in~\eqref{eq:gen_k},
  \end{itemize}
 we conclude that for $j$ sufficiently
  large, only selection functions that are essentially active at $x^*$ are represented in
  $\Gen^{k_j}$.
  Consequently, $\mathbb{I}^{k_j} \subseteq \mathbb{J}^{k_j}$ for all $j$ sufficiently large.
  Hence, in the definition of $\rchi_k$~\eqref{eq:chi}, we see that $a^{k_j} = 0$ for all $j$ sufficiently large.

  Let $(\lambdaa^{k_j},\lambdal^{k_j},\lambdau^{k_j})$ denote the minimizing
  $(\lambdaa,\lambdal,\lambdau)$ in the definition of $\rchi_k$~\eqref{eq:chi},
  and define
  $$g^{k_j}\triangleq G^{k_j}\lambdaa^{k_j}.$$
  By \Cref{lem:weird_v_approx} with $\mathbb{I} \gets \mathbb{I}^{k_j}$,
  $\mathbb{J}\gets \mathbb{J}^{k_j}$, $x \gets x^{k_j}$, $y \gets x^*$, and
  $\Delta\gets\Delta_{k_j}$,
  there exists $\sigma(g^{k_j})\in\partialC f(x^*)$ for each $g^{k_j}$ so that
  \[
  \|g^{k_j}-\sigma(g^{k_j})\|\leq B\Delta_{k_j},
  \]
  with $B$ defined by~\eqref{eq:c_2}.
Thus,
  $$\|g^{k_j}-\lambdal^{k_j} + \lambdau^{k_j} -(\sigma(g^{k_j}) - \lambdal^{k_j} + \lambdau^{k_j})\|\leq B\Delta_{k_j},$$
and so
  \[
  \|\sigma(g^{k_j}) - \lambdal^{k_j} + \lambdau^{k_j}\|\leq B\Delta_{k_j} + \|g^{k_j}-\lambdal^{k_j} + \lambdau^{k_j}\| = B\Delta_{k_j} + \rchi_{k_j},
  \]
  where the latter equality is true for all $j$ sufficiently large since we have shown that $a^{k_j}=0$ for $j$ sufficiently large.
  Since $\rchi_{k_j}\to 0$ and $\Delta_{k_j}\to 0$ by assumption, we conclude that $\left\| \sigma(g^{k_j}) -\lambdal^{k_j} + \lambdau^{k_j} \right\|
  \to 0$.
  Because $\sigma(g^{k_j}) -\lambdal^{k_j} + \lambdau^{k_j} \in \partialC \Lag(x^{k_j})$,
  Proposition~7.1.4 in~\cite{Facchinei2003}  yields the
  claimed result, by establishing that $\partialC \Lag$ is
  \emph{outer-semicontinuous} and therefore $0 \in \partialC \Lag(x^*)$.
  \qed
\end{proof}

We can now present our final result: that the limit of any subsequence of
\mspshortref iterates is a Clarke stationary point of the Lagrangian of~\eqref{eq:prob_statement}.

\begin{theorem} \label{thm:cluster_linear}
  Let \Crefrange{ass:f}{ass:cauchy} hold. If
  $x^*$ is a cluster point of a sequence $\{x^k\}$ generated by
  \mspshortref, then $0\in\partialC \Lag(x^*)$.
\end{theorem}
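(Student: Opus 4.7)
The plan is to reduce the theorem to \Cref{lem:g_to_v_linear} by constructing a (possibly new) subsequence $\{m_\ell\}$ along which $x^{m_\ell}\to x^*$ and $\rchi_{m_\ell}\to 0$ simultaneously; applying that lemma would then deliver $0\in\partialC\Lag(x^*)$. By hypothesis some $\{k_j\}$ satisfies $x^{k_j}\to x^*$, and by \Cref{lem:g_to_0_linear} we have $\liminf_k \rchi_k = 0$. The tension is that these two subsequences need not coincide: the small-$\rchi$ indices could in principle correspond to iterates clustering at other limit points, so a direct pairing is not available.

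To synthesize them, for each $n\in\Integers$ I would define
\[
A_n \defined \left\{ k : \|x^k - x^*\| < 1/n \text{ and } \rchi_k < 1/n \right\},
\]
and aim to show each $A_n$ is infinite. Any choice $m_n\in A_n$ would then produce a subsequence with $x^{m_n}\to x^*$ and $\rchi_{m_n}\to 0$ as required.

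Infiniteness of $A_n$ is established by contradiction. If $A_n$ were finite, then because $\|x^{k_j} - x^*\| < 1/n$ eventually along the cluster subsequence, it must be that $\rchi_{k_j} \geq 1/n$ for all $j$ large. Define $m_j \defined \min\{m \geq k_j : \rchi_m < 1/(3n)\}$, well-defined by \Cref{lem:g_to_0_linear}, and thus $\rchi_m \geq 1/(3n)$ throughout $[k_j, m_j)$. By \Cref{lem:delta_to_0}, $\Delta_m \to 0$, so for $j$ sufficiently large each $m\in [k_j,m_j)$ satisfies the smallness hypotheses of \Cref{lem:success}; this forces each such iteration to terminate the \msploopref successfully (unsuccessful termination at \Cref{line:acceptable} or \Cref{line:unsuccessful_break} would contradict the guarantee of \Cref{lem:success}). \Cref{ass:cauchy} then yields $f(x^m) - f(x^{m+1}) \geq \eta_1\kappafcd\Delta_m/(3n)$ on each such iteration, and telescoping across $[k_j, m_j)$ (and using that unsuccessful iterations contribute zero displacement) yields
\[
\|x^{m_j} - x^{k_j}\| \leq \sum_{m\in[k_j,m_j)}\Delta_m \leq \frac{3n}{\eta_1\kappafcd}\bigl(f(x^{k_j}) - f(x^{m_j})\bigr).
\]
Since $\{f(x^k)\}$ is nonincreasing and bounded below by \subassref{f2}{level}, the right-hand side tends to zero, so $x^{m_j}\to x^*$. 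Combined with $\rchi_{m_j} < 1/(3n) < 1/n$, this places $m_j\in A_n$ for $j$ large, contradicting finiteness of $A_n$.

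The principal technical obstacle is cleanly invoking \Cref{lem:success} during the \msploopref, since $\rchi_m$ may change between passes as $\Gen^m$ is enlarged (\Cref{line:grow_gen_set}). The key observation that resolves this is that appending a new column $g_{j'}$ to $G^m$---along with its associated nonnegative entry in $a^m$---can only decrease the minimum value in~\eqref{eq:chi}, since setting the newly introduced component $[\lambdaa]_{j'}=0$ recovers the prior feasible point. Hence the terminal bound $\rchi_m \geq 1/(3n)$ propagates backward to every intermediate pass, keeping the hypotheses of \Cref{lem:success} in force wherever they are needed in the inner loop.
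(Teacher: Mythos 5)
Your proposal is correct, and it is organized genuinely differently from the paper's proof. The paper splits into two cases---finitely many versus infinitely many successful iterations---and, in the latter, further splits on whether $\{x^k\}$ converges; the non-convergent subcase is handled by the summability bound \eqref{eq:finite_sum_linear} together with an escape-time construction $q(k')$ showing the iterates cannot repeatedly leave a small ball around $x^*$ while the stationarity measure stays bounded away from zero. Your $A_n$ construction absorbs all of these cases at once: when there are finitely many successes the tail of $\{x^k\}$ is eventually constant at $x^*$, so $A_n$ is infinite directly from \Cref{lem:g_to_0_linear} with no contradiction argument needed; in general, your first-exit times $m_j=\min\{m\ge k_j:\rchi_m<1/(3n)\}$ play the role of the paper's $q(k')$, and your displacement-versus-decrease bound $\|x^{m+1}-x^m\|\le\Delta_m\le \tfrac{3n}{\eta_1\kappafcd}\bigl(f(x^m)-f(x^{m+1})\bigr)$ is the analogue of \eqref{eq:finite_sum_linear}. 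Both routes funnel through \Cref{lem:success}, \Cref{lem:delta_to_0}, \Cref{lem:g_to_0_linear}, and \Cref{lem:g_to_v_linear}; yours buys a single unified argument at the cost of nothing, and is arguably cleaner. Two small remarks. First, the claim that every iteration in $[k_j,m_j)$ is successful is stronger than you need: unsuccessful iterations contribute zero to both sides of the telescoping inequality, so the bound holds for all $m$ in the range regardless. Second, your monotonicity observation (appending columns to $G^m$ cannot increase the optimal value of \eqref{eq:chi}) is a correct fact, but it does not by itself cover passes where $\Delta_k$ is shrunk at \Cref{line:decrease_delta} and $\Gen^k$ is subsequently recomputed smaller; the cleaner dispatch---and the one implicit in the paper's \Cref{lem:g_to_0_linear}---is that only the terminating pass of the \msploopref matters: there $\Delta\le\bar{\Delta}=\Delta_m\to 0$ while the recorded $\rchi_m\ge 1/(3n)$, so $C\le\eta_2$ rules out exit at \Cref{line:acceptable}, \Cref{lem:success} rules out exit at \Cref{line:unsuccessful_break}, and \Cref{lem:finite_termination} then forces a successful exit.
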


\begin{proof}
  First, suppose that there are only finitely many successful iterations and
  $k'$ is the last.
  Suppose for contradiction that $0\notin\partialC \Lag(x^{k'})$.
  By continuity of $F_i$ (\Cref{ass:f}), there exists
  $\bar\Delta > 0$ so that for all $\Delta\in [0,\bar\Delta]$, the manifolds
  active in $\cB(x^{k'};\bar\Delta)$ are precisely the manifolds active at
  $x^{k'}$; that is,
  \[
  \Act{F(x^{k'})} = \bigcup_{y \in \cB(x^{k'};\Delta)} \Act{F(y)} \qquad
    \mbox{ for all } \Delta \le \bar{\Delta}.
  \]

  By assumption,
  $\Delta_k$ decreases by a factor of $\gammad$ in each iteration after
  $k'$ because every iteration after $k'$ is unsuccessful. Thus there
  is a least iteration $k''\geq k'$ so that
  $\Delta_{k''} \le \bar\Delta$.  By the definition of $\Gen^k$ in~\eqref{eq:gen_k}, for each
  $k\geq k''$, $\Gen^k$ contains all
  manifolds at $x^{k'}$, and therefore
  $\nabla M(x^k) \nabla h_j(F(x^k)) \in\Gen^k$ for all $j\in\Act{F(x^k)}$.
  Since $k'$ is the last
  successful iteration, $x^{k} = x^{k'}$ for all $k \ge k'' \ge k'$.
  Consequently,
  the conditions for \Cref{lem:weird_v_approx} hold for $x\gets x^{k}$, $y \gets
x^{k'}$ (noting that $x^k = x^{k'}$) $\Delta\gets 0$, $\Gen \gets \Gen^k$, and
  $\cH \gets \partialC f(x^{k'})$; thus, for each $k\geq k''$,
  $g^k - \lambdal^k + \lambdau^k \in\partialC \Lag(x^{k'})$.

  Since $0\notin\partialC \Lag(x^{k'})$ by supposition,
  $\pi^*\defined\proj{0}{\partialC \Lag(x^{k'})}$ \jefflabel{pi_def} is nonzero, and so
  \begin{equation}\label{eq:v_approx_conclusion_linear}
    \|g^k - \lambdal^k + \lambdau^k \|\geq\|\pi^*\|>0 \qquad
      \mbox{ for all } k\geq k''.
  \end{equation}
  Since $\Delta_k \to 0$, $\Delta_k$ will satisfy the conditions of
  \Cref{lem:success} for $k$ sufficiently large: there will be a
  successful iteration contradicting $k'$ being the last.

  Next, suppose there are infinitely many successful iterations.
  We will demonstrate that there exists a subsequence of successful iterations
  $\{k_j\}$ that simultaneously satisfies both
  \begin{equation}\label{eq:two_conditions}
    x^{k_j}\to x^* \mbox{ and } \|g^{k_j} -\lambdal^{k_j} + \lambdau^{k_j}\|\to 0.
  \end{equation}
  If the sequence $\{x^k\}_{k\in\Integers}$ converges, then the subsequence
$\left\{ x^{k_j} \right\}_{j\in\Integers}$ from \Cref{lem:g_to_0_linear}
  satisfies~\eqref{eq:two_conditions}.
  Otherwise, if the sequence $\{x^k\}_k$ is not convergent, we will show that
\linebreak[4] $\liminf_{k\to\infty}(
  \max\{\|x^k-x^*\|,\|g^k-\lambdal^{k} + \lambdau^{k}\|\})=0$ for each cluster point $x^*$. Suppose for
  contradiction that there exist $\bar\theta>0$, an iteration $\bar{k}$,
  and a cluster point $x^*$ of the sequence $\{x^k\}$ with the
  following property:
given the infinite set \jefflabel{specialK}
  \[
  \mathbb{K} \defined \{k: k\geq\bar{k}, \|x^k-x^*\|\leq \bar\theta\}, 
  \]
  the subsequence $\left\{ x^k \right\}_{k \in \mathbb{K}}$ converges
  to $x^*$ and $\|g^k-\lambdal^{k} + \lambdau^{k}\| > \bar\theta$
  for all $k\in \mathbb{K}$. Thus,
  \begin{equation}
    \label{eq:finite_sum_linear}
    \eta_1\displaystyle\sum_{k\in \mathbb{K}} \|g^k-\lambdal^{k} + \lambdau^{k}\|\|x^{k+1}-x^k\| \leq
    \eta_1\displaystyle\sum_{k=0}^\infty \|g^k-\lambdal^{k} + \lambdau^{k}\|\|x^{k+1}-x^k\|
    < \infty,
  \end{equation}
  since on successful iterations, $\|x^{k+1}-x^k\|\leq \Delta_k$, while on
  unsuccessful iterations, $\|x^{k+1}-x^k\|=0$.  Since $\|g^k-\lambdal^{k} + \lambdau^{k}\|>\bar\theta$ for
  all $k\in \mathbb{K}$, we conclude from~\eqref{eq:finite_sum_linear} that
  \begin{equation}
    \label{eq:finite_sum2_linear}
    \displaystyle\sum_{k\in \mathbb{K}}\|x^{k+1}-x^k\| < \infty.
  \end{equation}

  Since $x^k\not\to x^*$, there exists some $\hat\theta\in(0,\bar\theta)$
  for which, for each $k' \in
  \mathbb{K}$, there exists
  \[
    q(k')\defined \min\{\kappa\in\Integers:\kappa>k',\quad \|x^{\kappa}-x^{k'}\| > \hat\theta\}.
  \]
  From this construction, since $\hat\theta<\bar\theta$, then $\{ k',
  k' + 1, \ldots, q(k')-1\} \subset \mathbb{K}$.

  By~\eqref{eq:finite_sum2_linear}, for $\hat\theta$ there exists $N \in \Integers$ such that
  \[
  \sum_{\substack{k \in \mathbb{K} \\ k \ge N}} \left\| x^{k+1} - x^k \right\| \le
  \hat\theta.
  \]

  Taking $k' \ge N$, by the triangle inequality, we have
  \begin{equation*}
    \label{eq:triangle_ineq_linear}
    \hat\theta < \|x^{q(k')}-x^{k'}\| \leq
    \displaystyle\sum_{i\in\{k',k'+1,\dots, q(k')-1\}} \|x^{i+1}-x^i\| \le
  \sum_{\substack{k \in \mathbb{K} \\ k \ge N}} \left\| x^{k+1} - x^k \right\| \le
  \hat\theta.
  \end{equation*}
  Therefore, $\hat\theta < \hat\theta$, a contradiction. Therefore $\liminf_{k\to\infty}
  (\max\{\|x^k-x^*\|,\|g^k-\lambdal^{k} + \lambdau^{k}\|\})=0$ for all cluster points $x^*$, and there is a
  subsequence satisfying~\eqref{eq:two_conditions}. By \Cref{lem:g_to_v_linear},
  $0\in\partialC \Lag(x^*)$ for all such subsequences.
  \qed
\end{proof}

\subsection{Convergence of \texttt{GOOMBAH}}\label{sec:goombah_analysis}
Because \goombahref essentially reverts to a manifold sampling step
whenever the (approximate) solution to~\eqref{eq:GOOMBAH_subproblem} does not provide sufficient decrease according to~\eqref{eq:rhoGOOMBAH}, \goombahref retains all the same convergence properties
as guaranteed by \Cref{thm:cluster_linear} for \mspshortref. To see why~\eqref{eq:rhoGOOMBAH} works as a sufficient decrease
measure, look at~\eqref{eq:intermediate} in the proof of
\Cref{lem:delta_to_0}. Partition the infinite set of successful iterations
$\{k_j\}$ into the set of successful iterations that are solutions of~\eqref{eq:GOOMBAH_subproblem}, $\succh$, and the set of successful iterations
from the manifold sampling loop, $\succm$. That is, $\{k_j\} = \succh \cup
\succm$. With this partition,
  \begin{align*}
      \infty &
      > f(x^0) - f^*
      \ge \ds\sum_{j=0}^\infty f(x^{k_j}) - f(x^{k_j+1}) \\
     & > \ds\sum_{k \in\succh}\tilde\eta_1\Delta_k^{1+\omega} +  \ds\sum_{k\in\succm}\eta_1\kappafcd \rchi_{k}\min\left\{\displaystyle\frac{\rchi_{k}}{\kappabmh},\Delta_{k},1\right\}\\
      & \geq \ds\sum_{k\in\succh}\tilde\eta_1\Delta_k^{1+\omega} + \ds\sum_{k\in\succm} \eta_1\kappafcd \min\left\{\frac{\Delta_{k}^2}{C^2\kappabmh},\frac{\Delta_{k}^2}{C} \right\}.
  \end{align*}
Regardless of the cardinalities of $\succh$ and $\succm$ (both infinite, or
exactly one infinite), we still conclude that $\Delta_{k_j}\to 0$, and so the
proof of \Cref{lem:delta_to_0} still follows.
The remaining proofs are unaffected.

\section{Testing}
We now discuss the performance of implementations of the numerical optimization
methods presented in this manuscript.
We compare \mspshortref, \goombahref, and the previous manifold
sampling code, which we denote \msdshortref because it employs the dual model~\eqref{eq:dual_model}.
In inspecting some \goombahref runs, we observed that the recourse to using the
\msploopref occurred on relatively few iterations. This motivates the inclusion
in our set of benchmarked implementations of a modified \goombahref that does not
resort to any manifold sampling logic but instead shrinks $\Delta_k$ on
iterations in which
\begin{equation*}
\frac{h(F(x^k))-h(F(x^k+s^k))}{h(M(x^k)) - h(M(x^k+s^k))} \le \eta_1.
\end{equation*}

The theory and implementation of \msdshortref have
been developed only for unconstrained problems. And, because of the
relatively poor performance of \msdshortref shown below, we did not seek to
extend \msdshortref to address bound-constrained problems.
We also tested but do not present the (relatively poor)
performance of other general-purpose nonsmooth optimization methods for both
bound-constrained and unconstrained problems. Because
such methods do not exploit the composite problem structure, their performance
was understandably poor, and we find such a comparison to be unfair.

\subsection{Test problems}
We test the four manifold sampling implementations on problems of the form
\[
\minimize_{x \in \Omega} h(F(x))
\]
with variously defined $\Omega$, $h$,  $F$, and starting points $x^0$. The
specific values of $F$ and $x^0$ are defined by the 53 vector-mapping problems in the
Mor\'{e}--Wild~\cite{JJMSMW09} benchmark set; the dimension of the domain
$F$ is between 2 and 12 and its output is between 2 and 65 dimensions. The
mappings of $F$ are smooth with known gradients (which are used only for
benchmarking purposes).

For $h$ we consider the following four nonsmooth\jefflabel{Qdef} mappings:
\begin{equation*}
\begin{aligned}[c]
h_1 & \defined  \min_i z_i^2\\
h_2 & \defined  \max_i z_i^2
\end{aligned}
\qquad\qquad
\begin{aligned}[c]
h_3 & \defined \sum_{i=1}^p \left|d_i - \max\left\{z_i,c_i\right\} \right| \\
h_4 & \defined \max_{i \in \{1,\ldots,l\}} \left\{ \left\| z - z_i \right\|_{Q_i}^2 + b_i \right\}.
\end{aligned}
\end{equation*}

The $h_3$ mapping is the piecewise-linear, censored-L1 loss
function~\cite{Womersley1986a} that measures how far $z_i$ is from target data
$d_i$, but only if $z_i$ is more than the censor value $c_i$. The values for
$c_i$ and $d_i$ are randomly generated for each for a given $(F, x^0)$ pair
following the approach outlined in~\cite[Section~5.1]{KLW18}. Similarly, the $l$ quadratics
defining the nonconvex mapping $h_4$ are randomly generated for each $(F,x^0)$
pair following the approach in~\cite[Section~6.2]{Larson2020}. While 10
instances of $h_3$ and 20 instances of $h_4$ were originally generated for each
$(F,x^0)$ pair, we used only the first instance for the current benchmark
studies.  These four $h$ mappings have known subdifferentials.

We consider an unconstrained setting ($\Omega \defined
\Reals^n$) and a bound-constrained setting $\Omega \defined \left\{ x: \lb \le x \le \ub \right\}$, where
$\lb$ and $\ub$ are defined for each $(F,x^0)$ pair via the following procedure.
We first run all the unconstrained experiments and record for each $(F,x^0)$ pair the approximate minimizer
$\tilde{x}$ from among the three solvers $\{\mspshortref, \goombahref, \goombahref \text{  w/o } \mspshortref\}$
that minimizes $h(F(x))$.
We then identify the midpoint $x^{\mathrm{mid}}$ on the line segment $[x^0,\tilde{x}]$ and define, coordinate-wise for $i=1,\dots,n$,
\begin{equation}\label{eq:testbounds}
\lb_i = x^0_i - \max\{x^0_i - x^{\mathrm{mid}}_i, x^{\mathrm{mid}}_i - x^0_i \}
\quad
\ub_i = x^0_i + \max\{x^0_i - x^{\mathrm{mid}}_i, x^{\mathrm{mid}}_i - x^0_i \}.
\end{equation}
Choosing the bounds as in~\eqref{eq:testbounds} guarantees that on at least one solver per $(F,x^0)$ pair, at least one bound must become active at some point in a run.

Thus, in total, we have 424 benchmark problem instances from $53$ choices of $(F,x^0)$, $4$ choices of $h$, and bound-constrained and unconstrained settings.

\subsection{Implementation details}

Implementations of the four tested methods were developed in
\texttt{Matlab}\footnote{All software is available at
\url{https://github.com/POptUS/IBCDFO/}}.
All methods used the same parameters (e.g., $\Delta_0$, $\eta_1$) where possible.
The \msdshortref implementation was run (only for the unconstrained problems)
using the default settings and subproblem solvers outlined in~\cite{Larson2020}.
Each method was given a budget of $100(n+1)$ evaluations of $F$ with tolerances
set to be as small as possible. For \goombahref, $\omega =1$ was used.
Our implementations of both \goombahref versions and \mspshortref do not explicitly
verify whether \Cref{ass:cauchy} is satisfied since we have not found it necessary
in practice.
Finally, we set $c_1 = c_2 = 1.0 + 10^{-8}$ in the definition of $\Gen_k$, see  \eqref{eq:gen_k}.

The \goombahref solutions to~\eqref{eq:GOOMBAH_subproblem} were produced by
calling various \texttt{GAMS} 38.3 solvers via the \texttt{Matlab} \texttt{GAMS} \texttt{GDXMRW}
interface~\cite{GAMS}. 
The particular \texttt{GAMS} models that encode our four nonsmooth
mappings can be viewed in our linked repository.
Unfortunately, we were unable to find one solver that
worked universally for all the subproblems and all the functions $h$.
Some of this is due to limitations on the forms of nonsmoothness supported by
the various optimization solvers. We also found that numerical issues occasionally arose
when $\Delta$ was tiny or when models $m^{F_i}$ were poorly scaled.
As a remedy, we attempted to solve each instance of~\eqref{eq:GOOMBAH_subproblem}  by multiple \texttt{GAMS}
solvers, including \texttt{CONOPT}~\cite{conopt}, \texttt{MINOS}~\cite{minos},
\texttt{KNITRO}~\cite{knitro}, %
and \texttt{BARON}~\cite{baron}. All solvers were limited to 30 seconds for each
subproblem solve, but rarely did any solve take more than a few
seconds in its entirety. These solvers were also used to compute $\rchi$ in
\mspshortref. All model Hessians $H_k$ were set to zero; therefore $\eta_2$ in the \msploopref
was effectively chosen to be $\infty$.
We remark that because of this choice, the if conditional beginning in
\Cref{line:acceptable} of the \msploopref is never entered, and hence there
is no need to compute $\rchi_k$. Nevertheless, we compute $\rchi_k$ in our
implementation anyway, for the sake of being able to monitor a stationarity
measure.

\subsection{Comparing implementations with approximate stationarity}

We observe that many of the compositions $h \circ F$ have numerous
stationary points. Since the methods being compared are all local optimization
methods, therefore, we find that comparing performance in terms of objective
value to be possibly misleading. That is, we believe a local optimization method should get credit
for solving a problem when it has identified an (approximate) stationary point,
even if that point has a worse function value than some other stationary point.

To measure a method's progress, we find it necessary to have an approximate
stationary value for each point $x^t$ evaluated by a given method. Obtaining
this quantity is somewhat difficult, with various concerns that must be
addressed. To begin, we randomly generate $50$ points uniformly within
$\cB(x^t;10^{-5})\subset \Reals^n$ and denote them $S^t$.\jefflabel{Sdef} The same initial
random seed is used when comparing all methods on a given problem so the
starting pattern of points is equal for all methods. We add to $S^t$ all points
evaluated by the method within $\cB(x^t;10^{-5})$. These points include $x^t$
and possibly any points that were used by the method to determine approximate
stationarity.

With $S^t$ in hand, we can then compute
\begin{equation}
\label{eq:gradient_bundle}
D(x^t)\defined \{\nabla F(s) \nabla h_j(F(s))\colon j\in\Act{F(s)}, \; s\in S^t\}
\end{equation}
and the corresponding
\begin{equation}
\label{eq:function_bundle}
a(x^t)\defined \{h_j(F(s))\colon j\in\Act{F(s)} \; s\in S^t\}.
\end{equation}
The gradient values in~\eqref{eq:gradient_bundle}
can be computed (in
postprocessing) because $\nabla F$ is computable in closed
form for the problems considered and each $h_j$ has a known gradient.
We consider a problem to be solved to an absolute level $\tau$ when the
optimal value of~\eqref{eq:chi} with problem data $D(x^t)$ and $a(x^t)$ in place of $G^k$ and $a^k$, respectively, denoted $\rchi_t$, satisfies
\begin{equation}
\label{eq:converged}
\rchi_t \leq \tau.
\end{equation}

We used the same routines used to compute $\rchi_k$ in \mspshortref to compute $\rchi_t$ for benchmarking.
Data profiles~\cite{JJMSMW09} are used to compare the performance of the tested
methods. Data profiles display how many evaluations of $F$ are required by each
method to solve a certain fraction of the benchmark set of problems to a level
$\tau$ for criterion~\eqref{eq:converged}. If a method satisfies~\eqref{eq:converged} on any problem for the first time after $t$ evaluations
of $F$, the data profile is incremented by $\frac{1}{424}$ at the point
$\frac{t}{(n_p+1)}$ (where $n_p$ is the dimension of the problem) on the
horizontal axis. In other words, a method's data profile displays the
cumulative fraction of problems solved by that method as a function of the
number of evaluations of $F$ (scaled by $n_p + 1$).

\begin{remark}
The condition in~\eqref{eq:function_bundle} was chosen for testing stationarity
because it was the stationary measure used in the analysis of \texttt{MS-P}. One also can consider a projection onto zero of the convex hull of
$D(x^t)$ and active constraint normals as an analogous stationary condition.
For unconstrained problems, this latter stationary measure would be the same as tested in, for instance,~\cite{Larson2020}.
We found no meaningful difference between the data profiles when using either
metric.
\end{remark}

\begin{figure}[t]
   \begin{center}
     \subfigure[][$\tau = 10^{-1}$, unconstrained]{\includegraphics[width=0.48\textwidth]{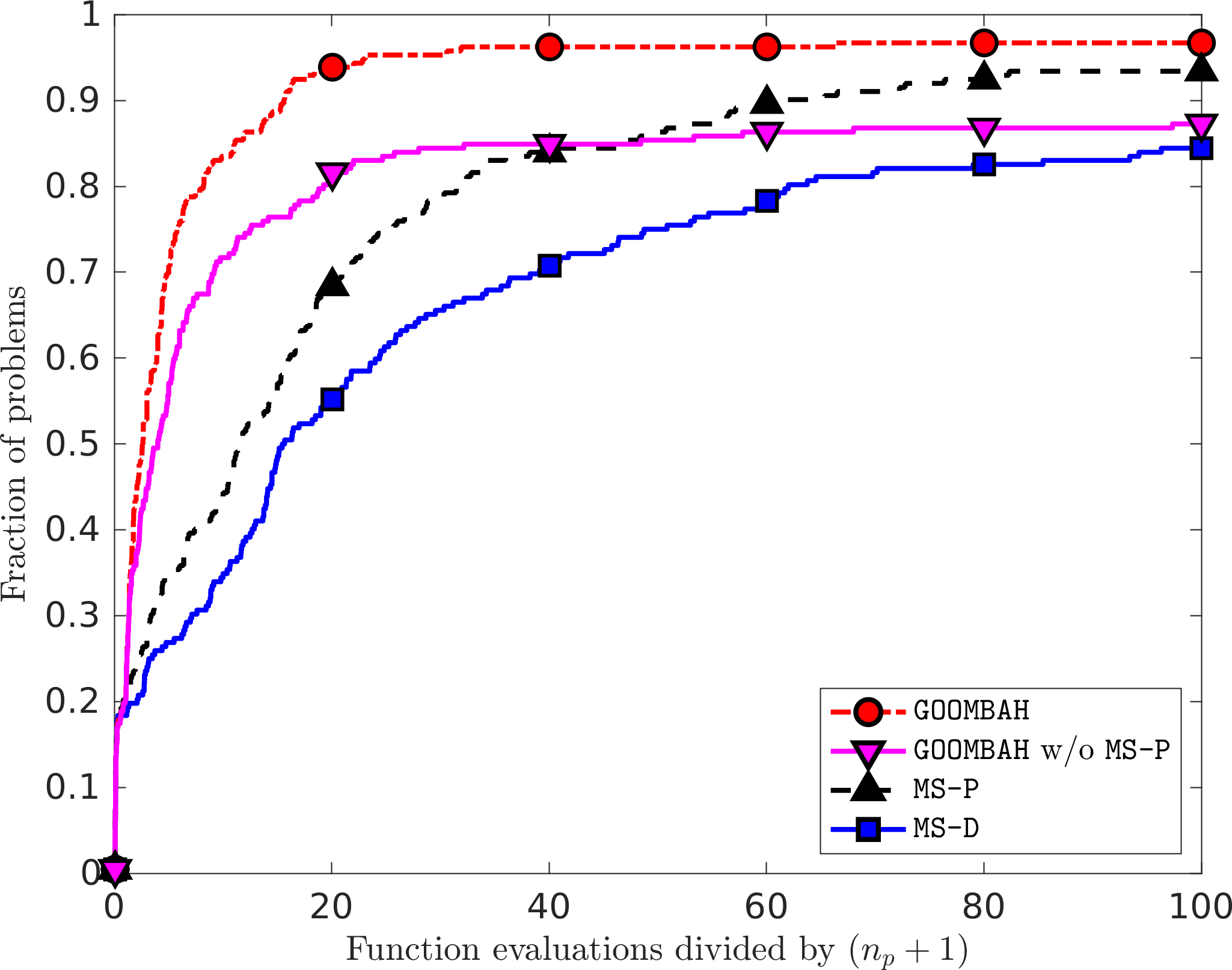}}
     \hfil
     \subfigure[][$\tau = 10^{-5}$, unconstrained]{\includegraphics[width=0.48\textwidth]{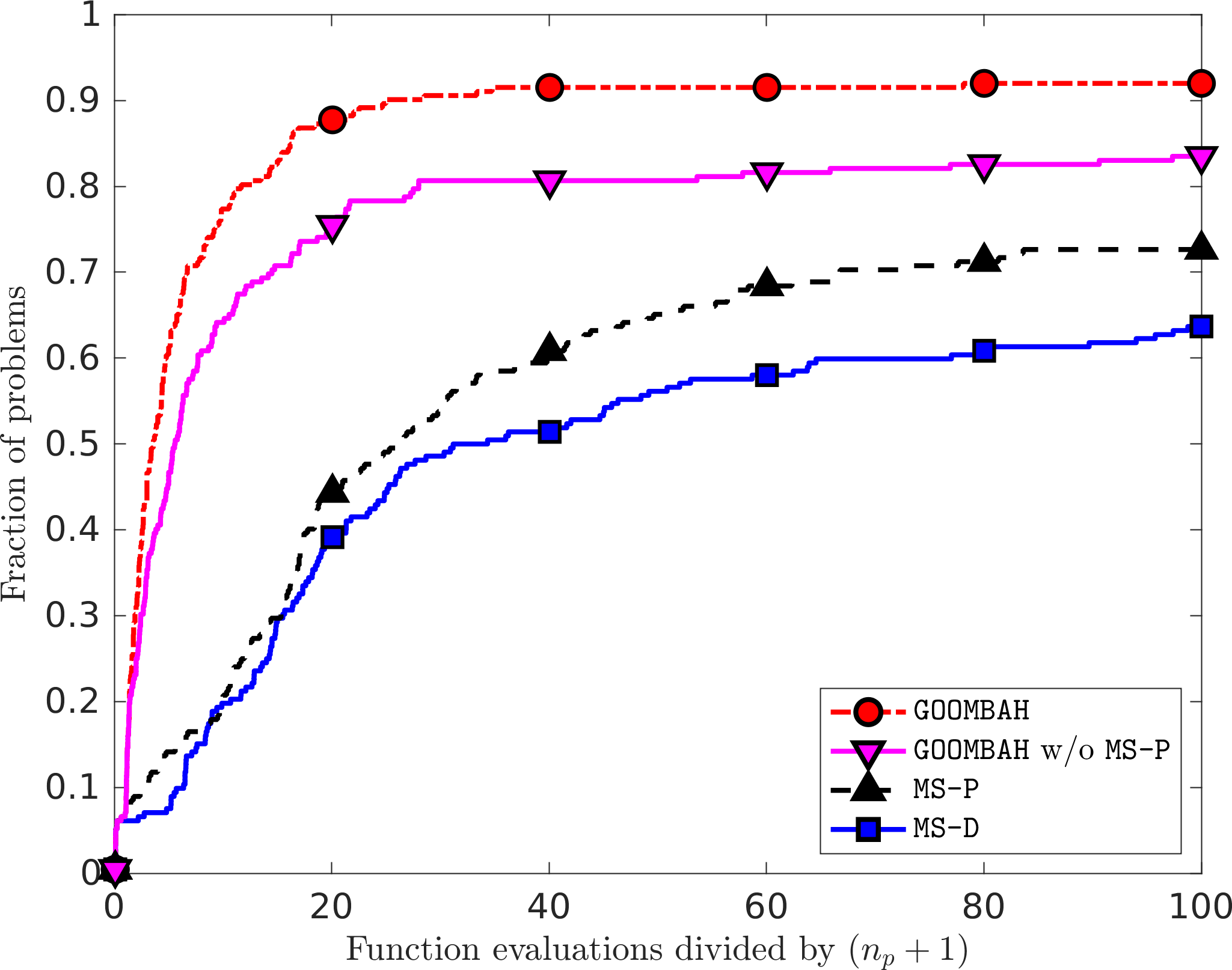}}\\
     \subfigure[][$\tau = 10^{-1}$, bound-constrained]{\includegraphics[width=0.48\textwidth]{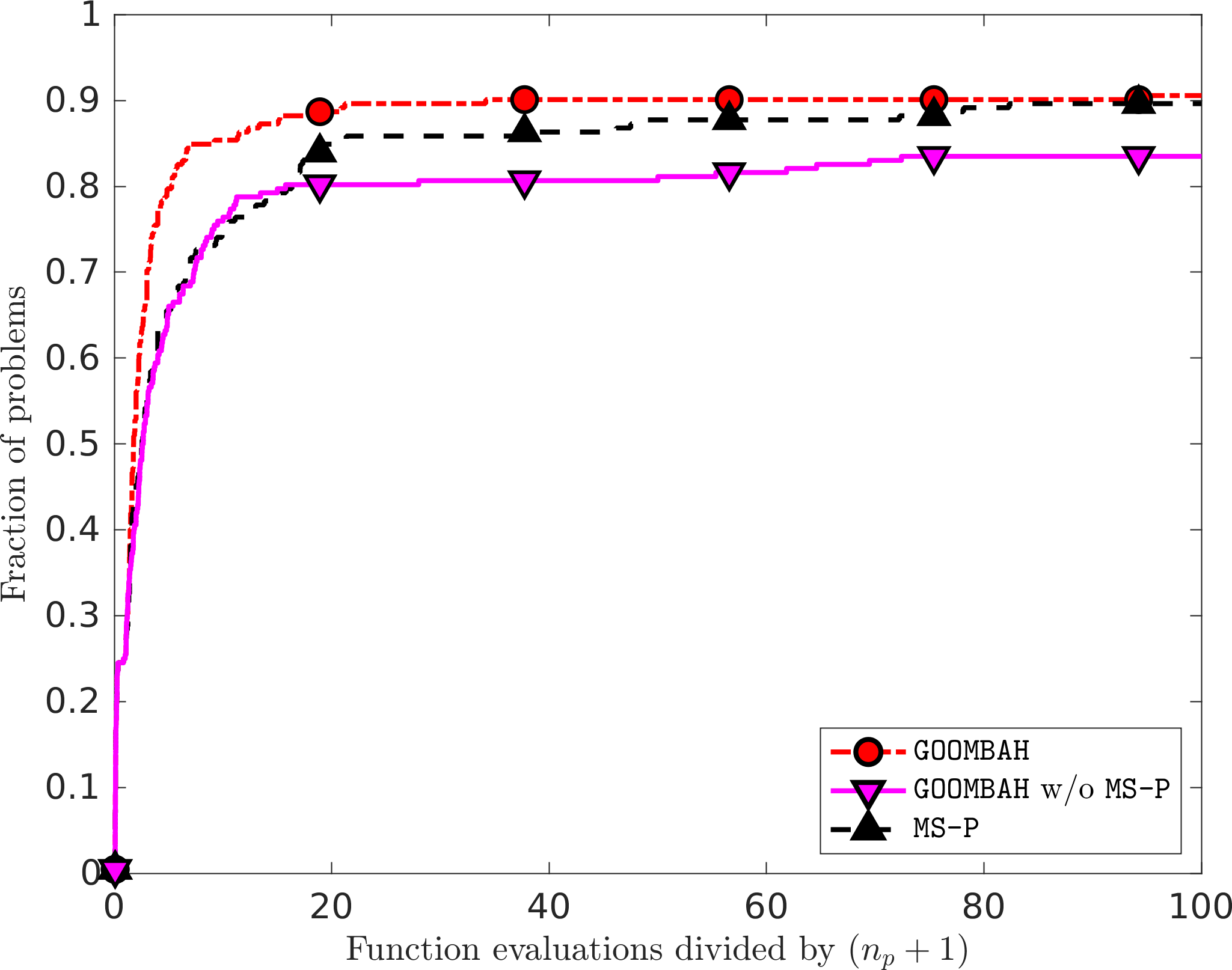}}
     \hfil
     \subfigure[][$\tau = 10^{-5}$, bound-constrained]{\includegraphics[width=0.48\textwidth]{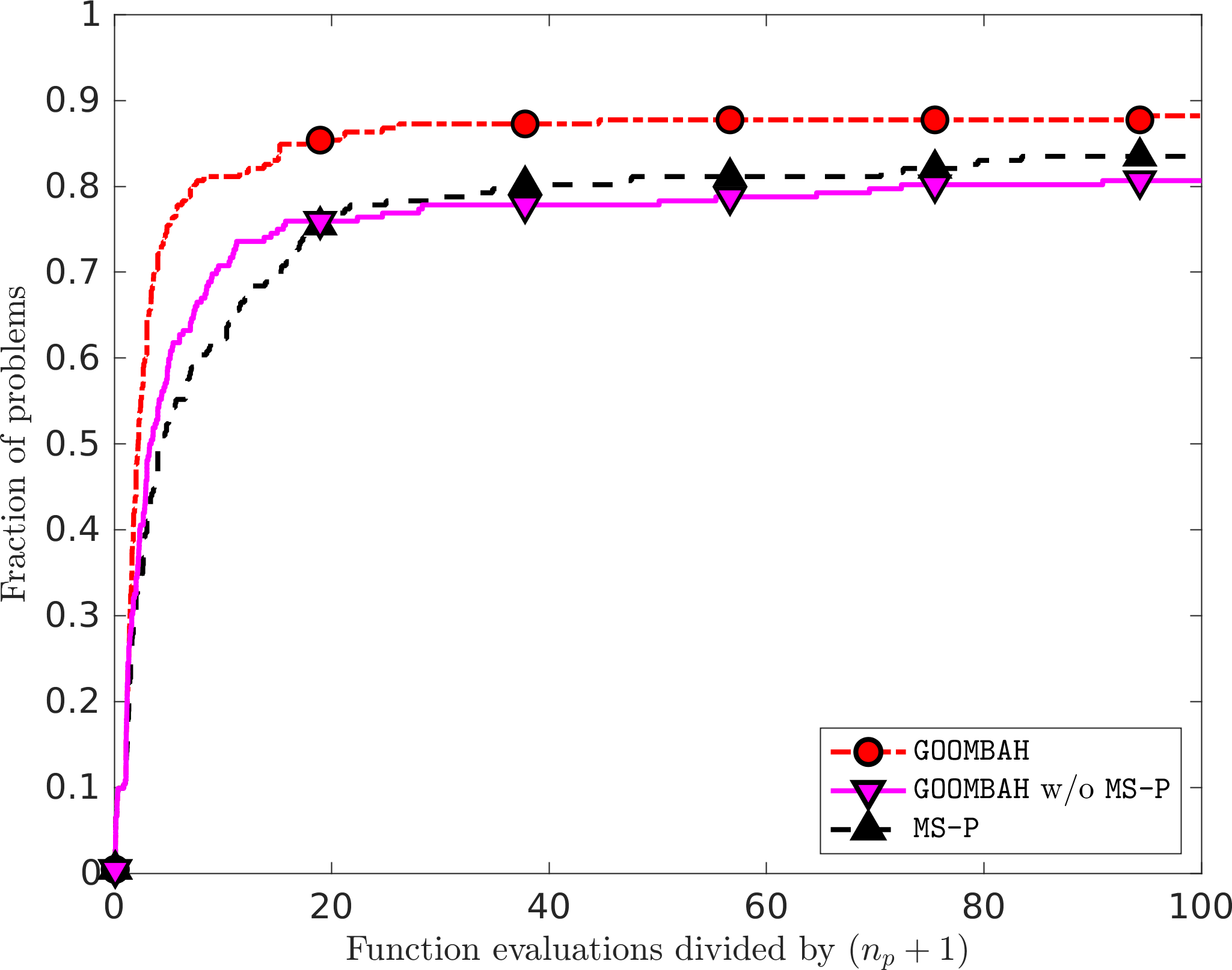}}
  \end{center}
   \caption{Data profiles using the stationary metric~\eqref{eq:converged}  with $\tau = 10^{-1}$ (left) and $\tau = 10^{-5}$ (right) for the $53 \times 4$ bound-constrained and unconstrained problems.\label{fig:all_four}}
   \end{figure}

The performance of the four benchmarked implementations is presented in
\Cref{fig:all_four}. For the most part, we
see that the \goombahref implementations considerably outperform the manifold
sampling implementations on both bound-constrained and unconstrained problems. We
observe that the (infrequent) recourse to manifold sampling logic helps
\goombahref solve approximately 5\% of the benchmark problems.

As mentioned, \msdshortref is seen to be outperformed by its primal counterpart
\mspshortref for unconstrained problems for all levels of $\tau$ that we
considered. The supplemental material in \Cref{sec:more_figs}
provides a greater dissection of these
results for each of the $h$ functions considered. We do note that \msdshortref
and \mspshortref are seen to have nearly identical performance for the
$h_1$-problems. 
This may be explained by the fact that the pointwise minimum structure of $h_1$ can still be addressed
by the \mspshortref analysis, but
it is clear that~\eqref{eq:primal_model}, which involves a pointwise maximum, is a poor model of such an $h_1$. 

\section{Discussion and Future Directions}
Many open research questions are related to the methods presented in this
manuscript.
One of the most obvious is the need to quantify and empirically study
the trade-off between the cost of producing the next iterate $x^k + s^k$ and the cost of performing
objective evaluations $F(x^k + s^k)$.
While many derivative-free optimization papers may make
a blanket statement that the cost of the objective is assumed to outweigh any
cost of the operations of the optimization method itself, such an assumption
seems less reasonable for the \goombahref method, especially if $h$ is complicated,
$m^{F_i}$ are nonlinear models, and a global optimization method is being
called to produce iterates.
The development of a ``convention'' or ``rule'' for limiting such algorithmic
effort does not appear obvious.
It seems natural to limit the wall-clock time used to solve~\eqref{eq:GOOMBAH_subproblem}---which may need to invoke global optimization solvers---to less than some fraction (say, one-tenth) of the expected time required
to evaluate $F$ on a given computational resource.
From another vantage, such a limitation may not be proper,
especially if the simulation uses massive amounts of parallel compute resources and
the trust-region subproblem solver cannot utilize such resources as efficiently.
In fact, one could argue that \emph{more} time should be spent on the solution of~\eqref{eq:GOOMBAH_subproblem}
when $F$ requires considerable
computational resources;
the time spent improving the next iterate $x^k+s^k$ could greatly reduce
the required number of calls to $F$.
Such a trade-off calculation is even further confused by the fact that, in
practice, global optimization methods often find high-quality solutions quickly;
additional effort does not improve the solution quality but instead only reduces
the gap between a lower and upper bound on the best possible objective value.

Another feature we seek to add to the implementation of \mspshortref or
\goombahref is the ability to use (approximate) derivatives of $F$ when they are available.
Because \mspshortref requires only gradient-accurate models via
\Cref{def:flmodels}, we remark that given access to computable gradients
$\nabla F_i(x)$, we have only to define $m^{F_i}(x)$ as the first-order Taylor
model centered at the current iterate $x^k$, namely,
$$m^{F_i}(x^k+s) = F_i(x^k) + \nabla F_i(x^k)^\top s.$$
Such a choice of $m^{F_i}$ is automatically gradient-accurate with constant
$\kappaieg = \LnFi$. Although we have not yet implemented or tested a gradient-based
method, this is a trivial extension, and we intend to release it eventually.

In future work, we can also investigate nontrivial model Hessians $H_k$ in~\eqref{eq:subproblem}.
In our
experiments, we employed only $H_k = 0$, but our analysis permits the use of
any model Hessian satisfying $\|H_k\|\leq\kappabmh$. Practical experience (and
limited theoretical results; see, e.g.,~\cite{lewis2013nonsmooth}) leads one to
believe that BFGS matrices may be appropriate choices for $H_k$. Alternatively,
in a (primal) gradient sampling context, Curtis and Que~\cite{curtis2013adaptive} employed an
overestimating Hessian strategy that may prove useful in the context of the
subproblem~\eqref{eq:subproblem}. Additionally, at the expense of even more
difficult subproblems (QCQPs), Schichl and Fendl~\cite{schichl2020second} proposed---in a bundle
method context---incorporating Hessians that approximate $\nabla^2 F_i(x)$
into the constraints of~\eqref{eq:subproblem}. Although this would require a
more computationally difficult subproblem, this is in line with our discussion
acknowledging tradeoffs between subproblem difficulty and
the expense of evaluating $F$. All these Hessian-building methods are of
interest to us in future testing and releases of software.

In closing, gentle reader, we'd like to thank you. What's that, you say? Us
thanking you? No, it's not a misprint. For you see, we enjoyed writing this
manuscript as much as you enjoyed reading it. The end.

\section*{Acknowledgments}
We thank Geovani Nunes Grapiglia for initial discussions of convergence analysis
results.
This work was supported in part by the U.S.~Department of Energy, Office of
Science, Office of Advanced Scientific Computing Research and Office of
High-Energy Physics, Scientific Discovery through Advanced Computing (SciDAC)
Program through the FASTMath Institute and the CAMPA Project under Contract
No.~DE-AC02-06CH11357.

\printbibliography

\appendix
\section{Table of notation}\label{sec:table}
\begin{description}[before={\renewcommand\makelabel[1]{##1:}}]
\item[$\Act{z}$] Set of indices of essentially active functions at a point $z$ \dotfill \Cref{def:pc1manifold}
\item[$\cB$] Euclidean ball \dotfill \newref{cBdef}
\item[$D$] Sampled gradients of $f$ used in numerical tests \dotfill \Cref{eq:gradient_bundle}
\item[$F$] Expensive inner function, with components $F_i$ \dotfill \Cref{ass:f}
\item[$G$] Matrix with columns of vectors from $g_j^k$ \dotfill \Cref{eq:dual}
\item[$\Gen$] Set of indices used to generate $G$ \dotfill \Cref{eq:gen_k}
\item[$H$] Model Hessian $H^k$ \dotfill \Cref{eq:primal_model}
\item[$\Hdef$] Set of selection functions \dotfill \Cref{def:continuous_selection}
\item[$I_n$] The identity matrix for $\Reals^n$  \dotfill \Cref{eq:dual}
\item[$K$] Used for Lipschitz constants, (e.g., $\Lh, \LnF, \LnFi$ ) \dotfill e.g., \Cref{def:constants}
\item[$\mathbb{K}$] Special sets of iterates \dotfill \newref{specialK}
\item[$\Level$] Level set \dotfill \newref{Leveldef}
\item[$\Levelmax$] Level set plus $\Deltamax$ padding \dotfill \newref{eq:Lmaxdef}
\item[$\Lag$] The Lagrangian function \dotfill \newref{Lagdef}
\item[$\Lset$] $\{i\in 1,\dots,n: \ell_i = -\infty\}$ \dotfill\newref{LandUdef}
\item[$M$] Vector mapping of the $n$ models $m^{F_i}$ \dotfill\newref{Mdef}
\item[$\Integers$] The set of Integers
\item[$Q$] $Q_i$ is used to define quadratics in test functions
  \dotfill\newref{Qdef}
\item[$\Reals$] The set of real numbers
\item[$S$] Set of points $S^t$ sampled around each $x$ evaluated by a method \dotfill\newref{Sdef}
\item[$\Uset$] $\triangleq \{i\in 1,\dots,n: u_i = \infty\}$ \dotfill\newref{LandUdef}
\item[$Y$] A collection of points $y$ from the domain of $F$ that have been evaluated \dotfill\newref{Ydef}

~\\

\item[$a$] The value $ [a^k]_j \defined f(x^k) - f_j(x^k) + \beta_{j,k}$ \dotfill\newref{a_def}
\item[$b$] $b_i$ is used to define quadratics in test functions \dotfill\newref{Qdef}
\item[$c$] $c_i$ are the censors for the censored-L1 loss function, Also $c_1,
  c_2$ are algorithmic constants \dotfill\newref{Qdef} and \Cref{eq:gen_k}
\item[$d$] $d_i$ are the data for the censored-L1 loss function \dotfill\newref{Qdef}
\item[$e$] Vector of all ones
\item[$f$] Composite objective function $f \defined h \circ F$. Also, sometimes $f_j \defined h_j \circ F$ \dotfill\Cref{ass:f}
\item[$g$] The generators, $g_j^k = \left[\nabla m^F_k\right]\nabla h_j(F(x^k))$ \dotfill\newref{gen_def}
\item[$h$] Nonsmooth, outer piecewise-selection function \dotfill\Cref{ass:f}
\item[$h_j$] Smooth selection functions defining $h$ \dotfill\Cref{def:continuous_selection}
\item[$i$] General index
\item[$j$] General index
\item[$k$] Iteration of the algorithm
\item[$\lb$] Lower bounds  \dotfill\newref{Omega_def}
\item[$m^{F_i}$] A model of $F_i$ \dotfill\Cref{def:flmodels}
\item[$n$] Dimension of domain of $F$ (and $f$) \dotfill\Cref{ass:f}
\item[$p$] Dimension of domain of $h$ \dotfill\Cref{ass:f}
\item[$s$] The trust-region subproblem step, sometimes $s^*$ or $s^k$ or $\sGOOMBAH^k$ 
  \dotfill\Cref{eq:equiv_subproblem} or \Cref{eq:GOOMBAH_subproblem}
\item[$t$] Index for points evaluated by methods (not necessarily the iterate $k$) \dotfill\newref{Sdef}
\item[$\ub$] Upper bound on domain \dotfill\newref{Omega_def}
\item[$v$] Primal variables for the problem \dotfill\Cref{eq:equiv_subproblem}
\item[$x$] Points in the domain of $F$
\item[$y$] Points in the domain of $F$
\item[$z$] Points in the domain of $h$

~\\

\item[$\beta$] A nonnegative offset added to affine functions in primal model \dotfill\Cref{eq:primal_model}
\item[$\gammad$] Trust-region decrease factor \dotfill\newref{gammad_def}
\item[$\gammai$] Trust-region increase factor \dotfill\newref{gammai_def}
\item[$\Delta$] Trust region radius \dotfill\newref{Delta_def}
\item[$\Deltamax$] Upper bound on trust region radius \dotfill\newref{Deltamax_def}
\item[$\eta$] Algorithmic acceptability tolerances \dotfill\newref{eta1_def}, \newref{eta2_def}
\item[$\kappa$] Bounds on errors (between models/functions, fraction of Cauchy decrease) $\kappaieg, \kappag, \kappabmh, \kappafcd$
  \dotfill \Cref{def:flmodels}, \Cref{def:constants}, \newref{kappabmh}, \Cref{ass:cauchy}
\item[$\lambda$] Dual variables ($\lambdaa, \lambdal, \lambdau$) \dotfill \Cref{eq:dual}
\item[$\pi$] Used to denote projection problem solution \dotfill\newref{pi_def}
\item[$\rho$] Ratio of actual-versus-predicted decrease \dotfill\Cref{eq:rho}, \Cref{eq:rhoGOOMBAH}
\item[$\sigma$] A mapping between two convex sets \dotfill\Cref{lem:weird_v_approx}
\item[$\tau$] Tolerance used for data profiles \dotfill\Cref{eq:converged}
\item[$\rchi$] Stationary measure\dotfill\Cref{eq:chi}
\item[$\Omega$] Domain of test problems, either $\Reals^n$ or $[\ell,u]$\dotfill\newref{Omega_def}
\item[$\partialC$] Clarke subdifferential\dotfill\newref{C_def}

\item[Operations]

\item[$\clp{\cS}$] Closure of a set $\cS$
\item[$\intp{\cS}$] Interior of a set $\cS$
\item[$\cop{\cS}$] Convex hull of a set $\cS$
\item[$\proj{0}{\cS}$] Projection of zero onto a set $\cS$
\item[$\image{\cS}$] Image of set $\cS$ under $F$

\end{description}

\section{Additional data profiles}\label{sec:more_figs}
\begin{figure}[h]
  \begin{center}
    \subfigure[][$\tau = 10^{-1}$, unconstrained]{\includegraphics[width=0.45\linewidth]{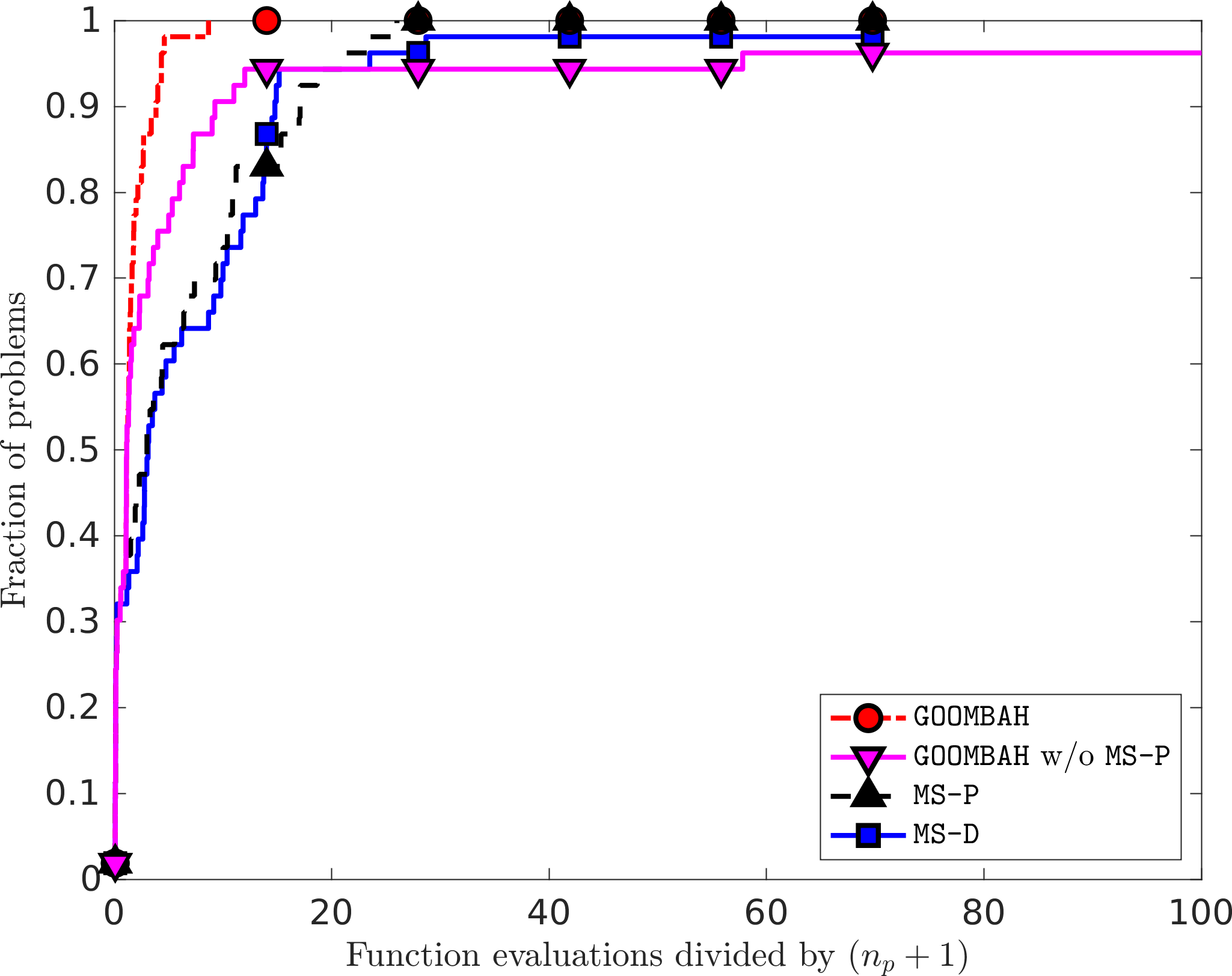}}
    \hfil
    \subfigure[][$\tau = 10^{-1}$, constrained]{\includegraphics[width=0.45\linewidth]{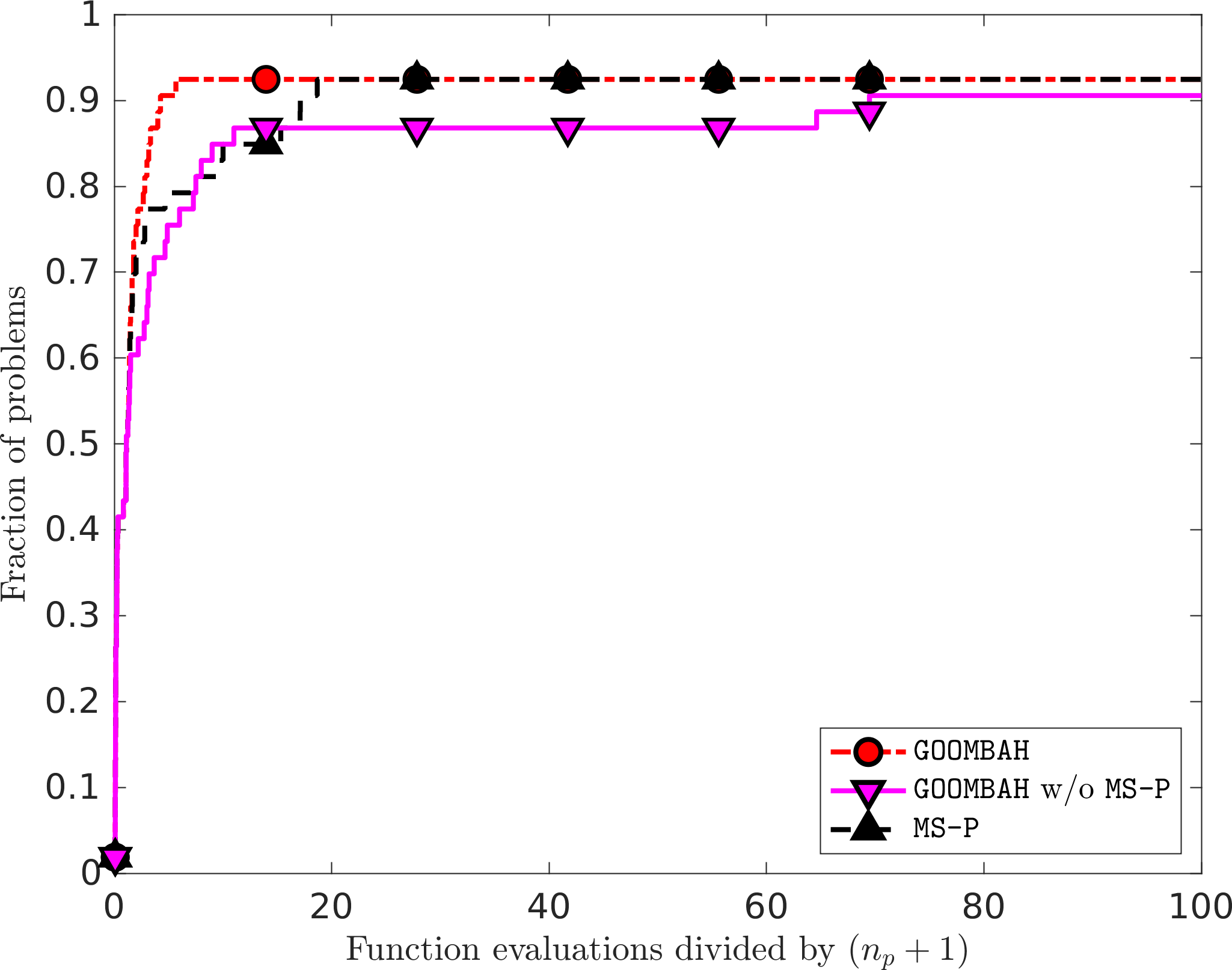}}\\[10pt]
    \subfigure[][$\tau = 10^{-3}$, unconstrained]{\includegraphics[width=0.45\linewidth]{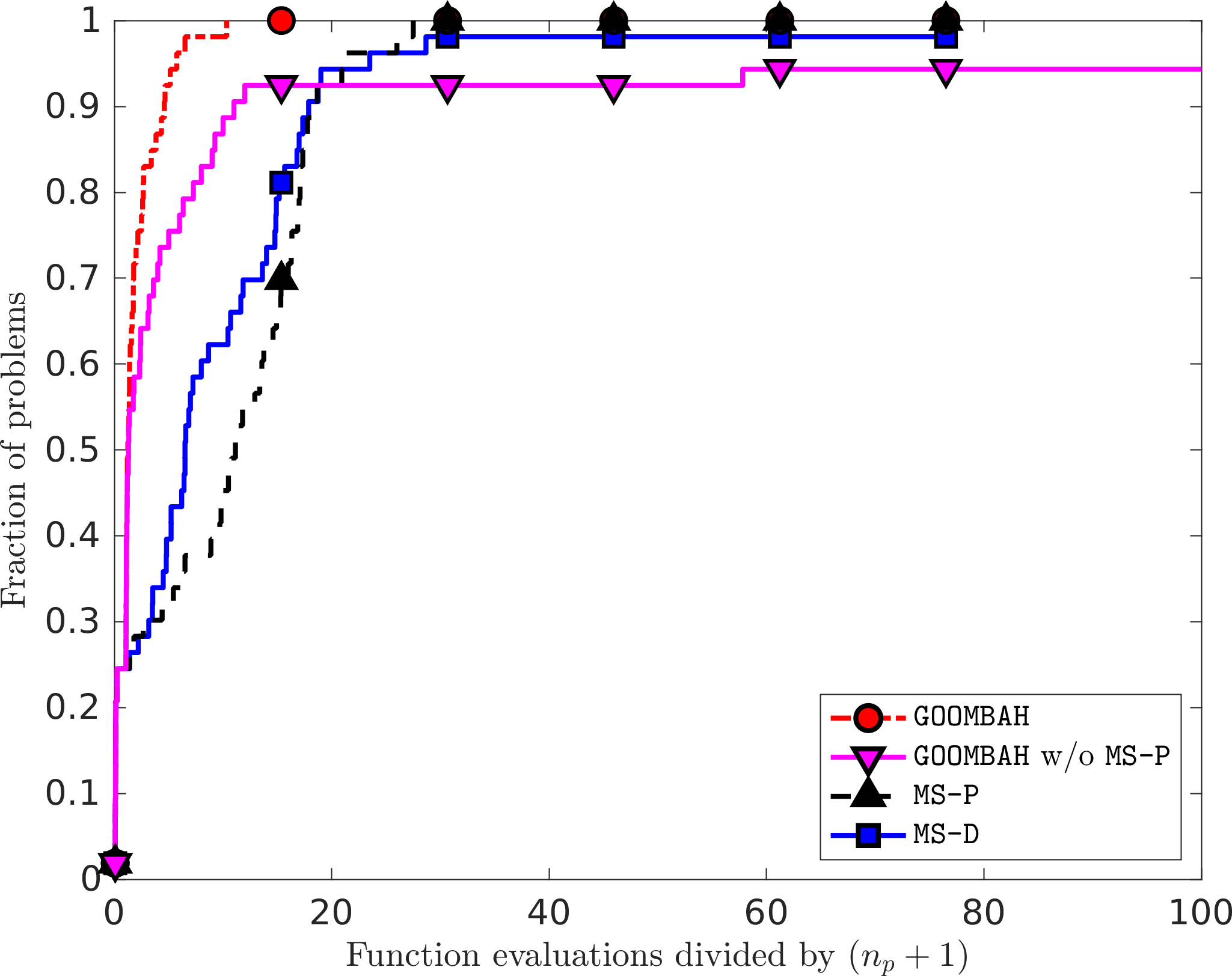}}
    \hfil
    \subfigure[][$\tau = 10^{-3}$, constrained]{\includegraphics[width=0.45\linewidth]{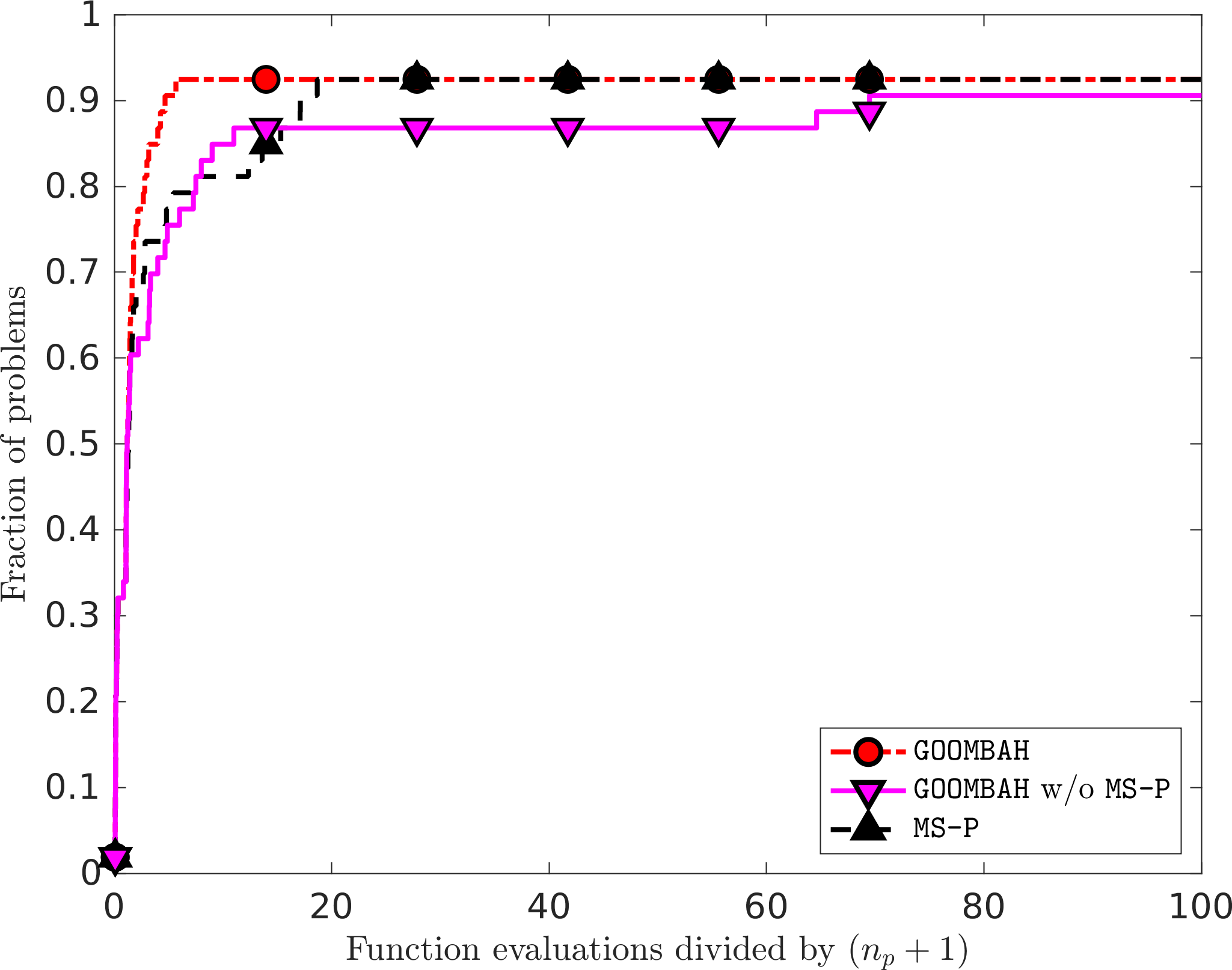}}\\[10pt]
    \subfigure[][$\tau = 10^{-5}$, unconstrained]{\includegraphics[width=0.45\linewidth]{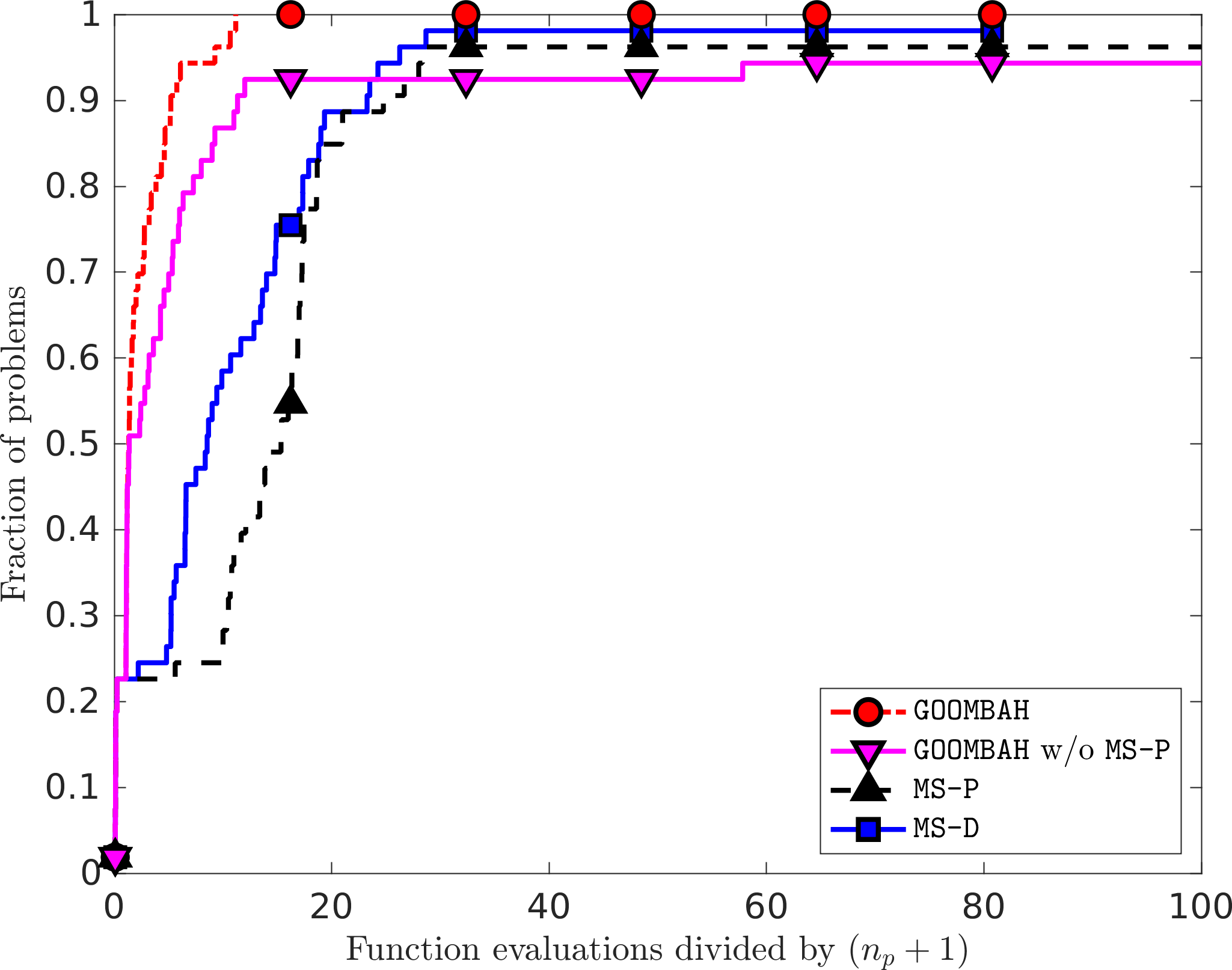}}
    \hfil
    \subfigure[][$\tau = 10^{-5}$, constrained]{\includegraphics[width=0.45\linewidth]{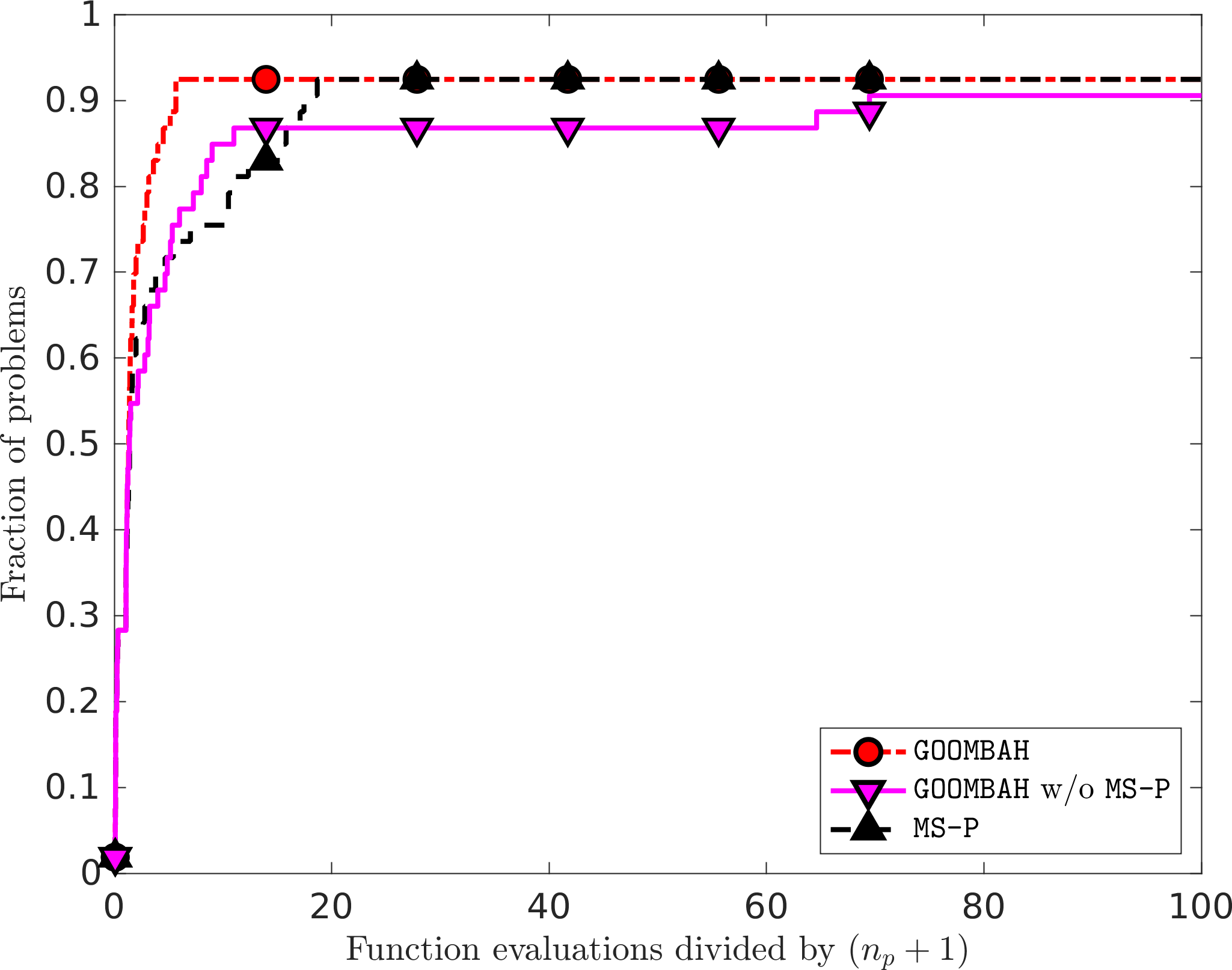}}
  \end{center}
  \caption{Data profiles on the pointwise-minimum-squared function $h_1$, constrained and unconstrained, for three values of $\tau$.}
\end{figure}

\begin{figure}[h]
  \begin{center}
    \subfigure[][$\tau = 10^{-1}$, unconstrained]{\includegraphics[width=0.45\linewidth]{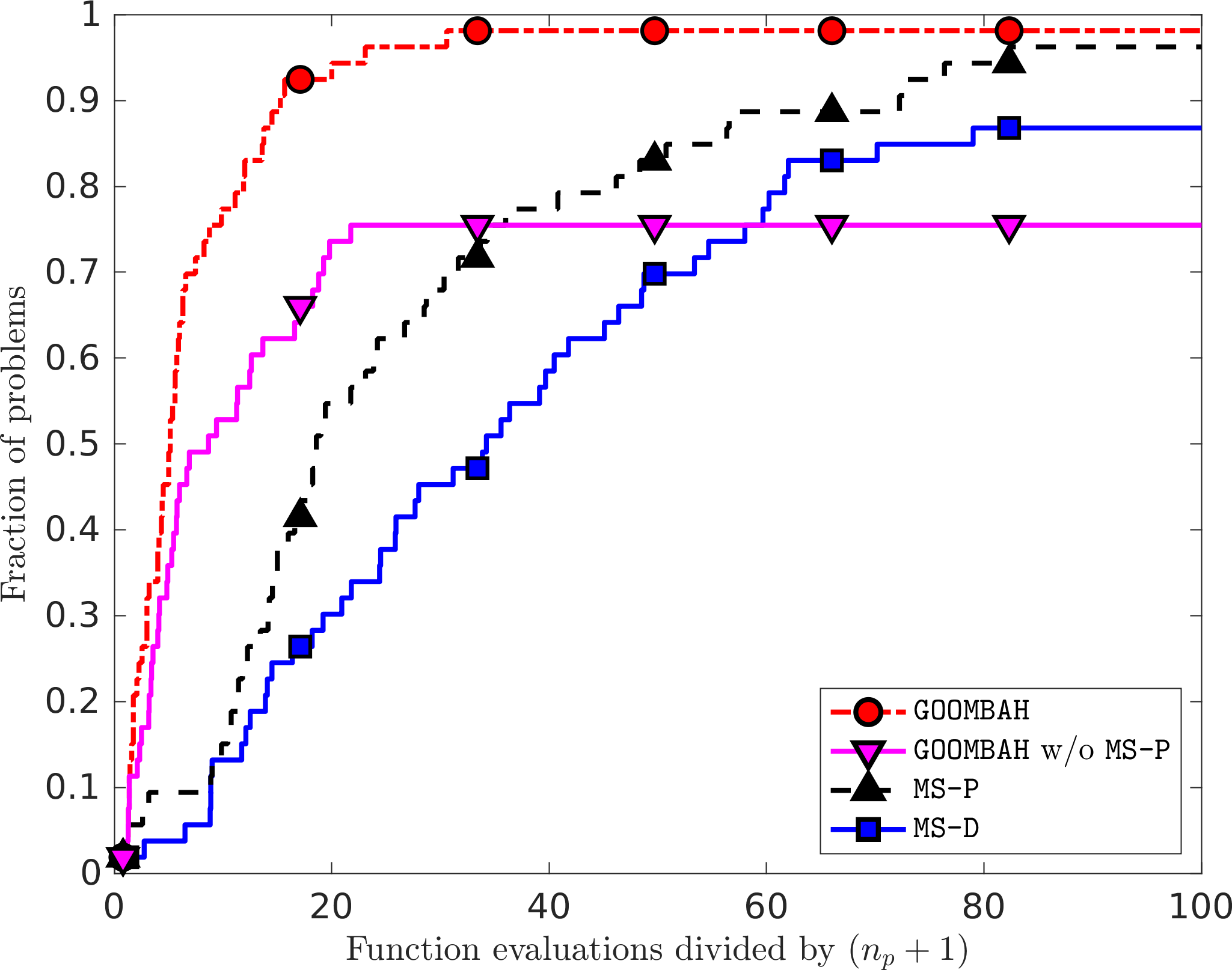}}
    \hfil
    \subfigure[][$\tau = 10^{-1}$, constrained]{\includegraphics[width=0.45\linewidth]{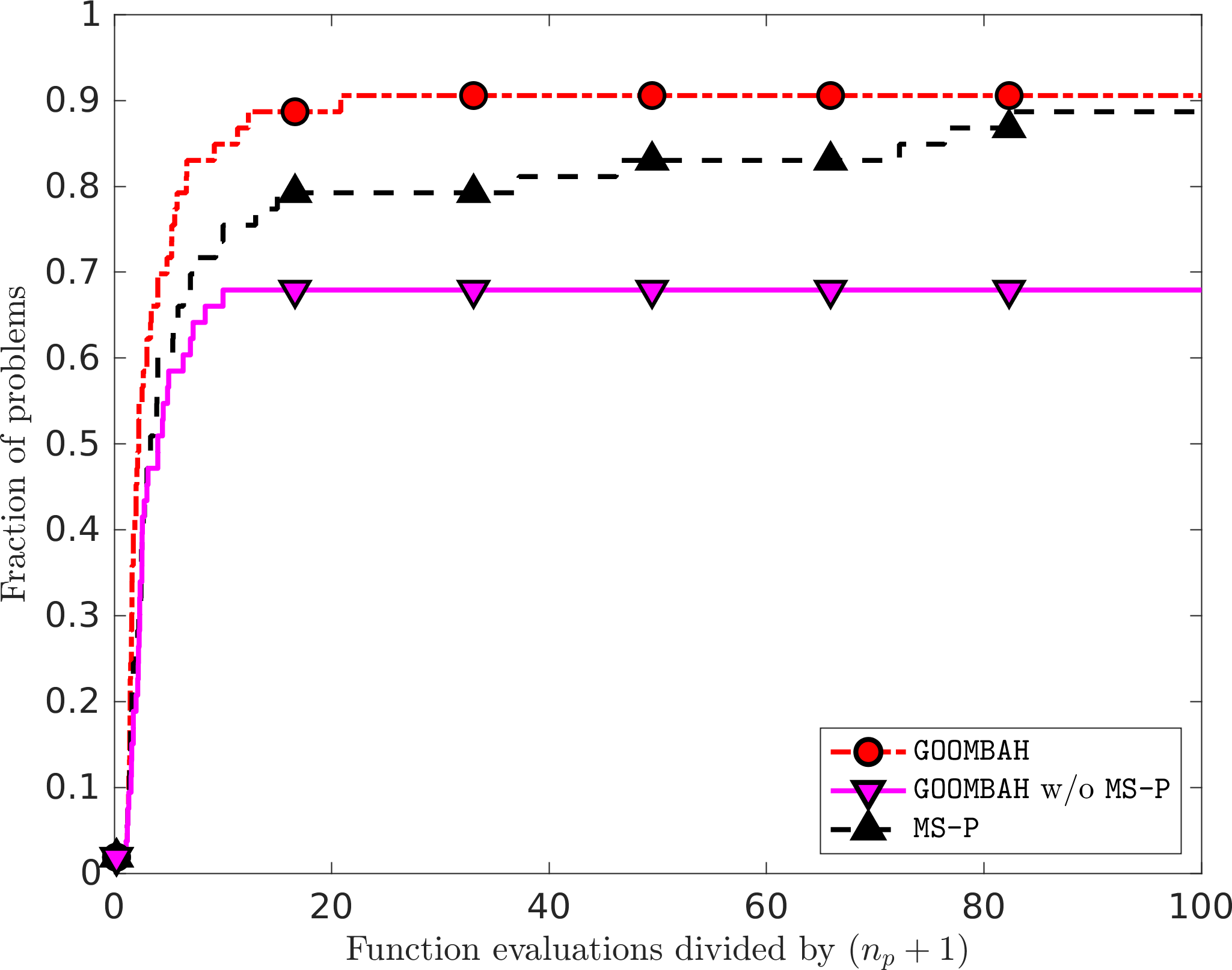}}\\[10pt]
    \subfigure[][$\tau = 10^{-3}$, unconstrained]{\includegraphics[width=0.45\linewidth]{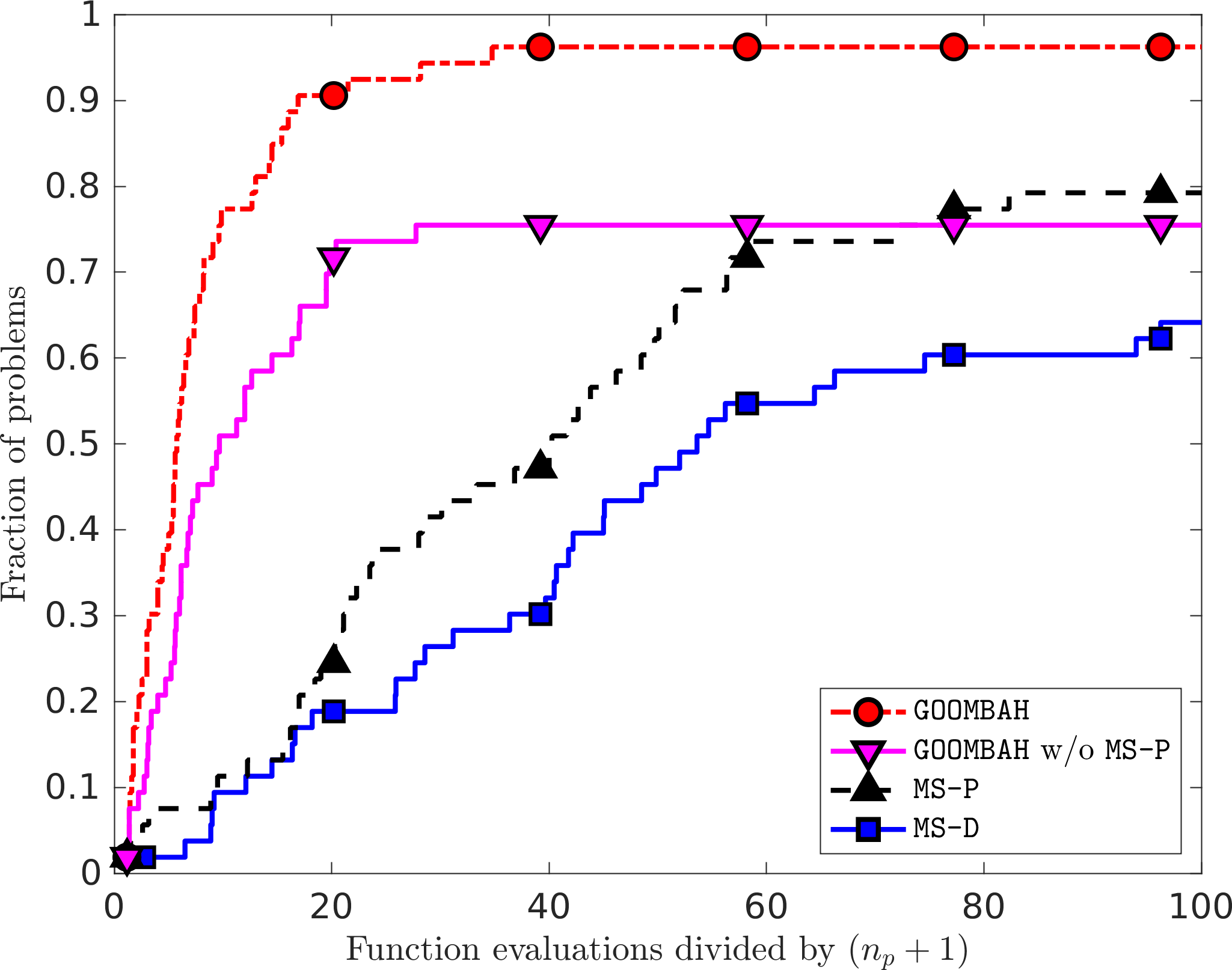}}
    \hfil
    \subfigure[][$\tau = 10^{-3}$, constrained]{\includegraphics[width=0.45\linewidth]{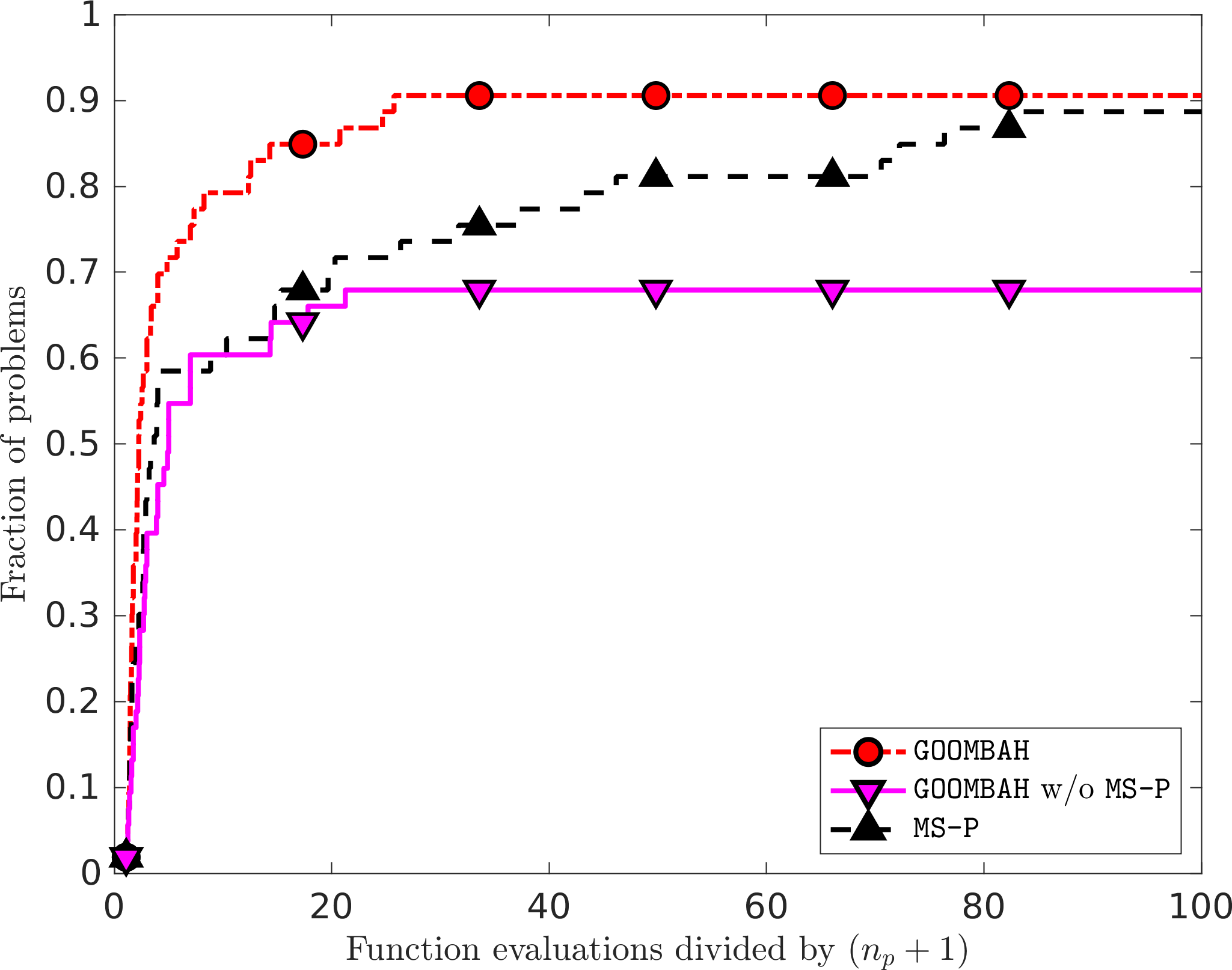}}\\[10pt]
    \subfigure[][$\tau = 10^{-5}$, unconstrained]{\includegraphics[width=0.45\linewidth]{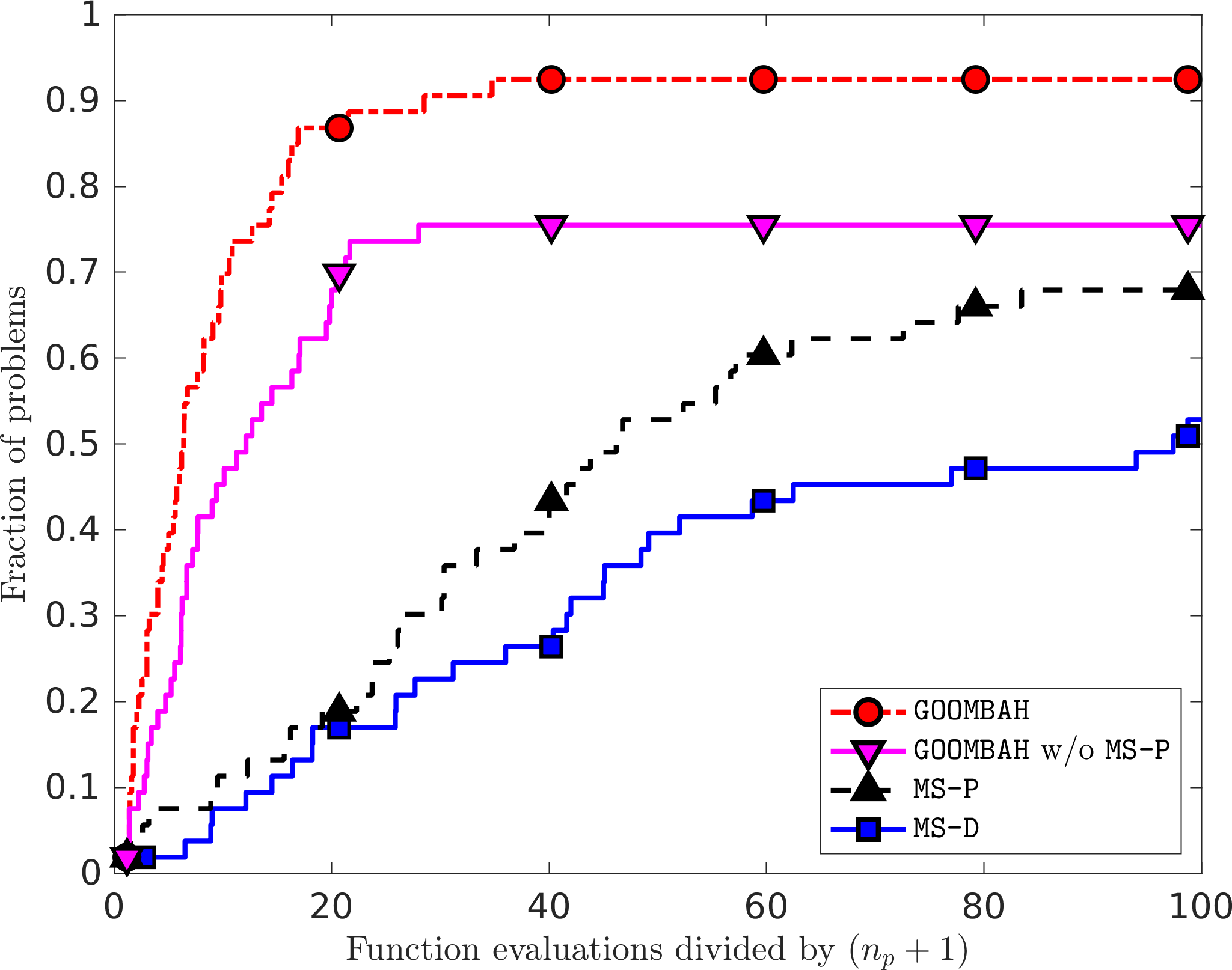}}
    \hfil
    \subfigure[][$\tau = 10^{-5}$, constrained]{\includegraphics[width=0.45\linewidth]{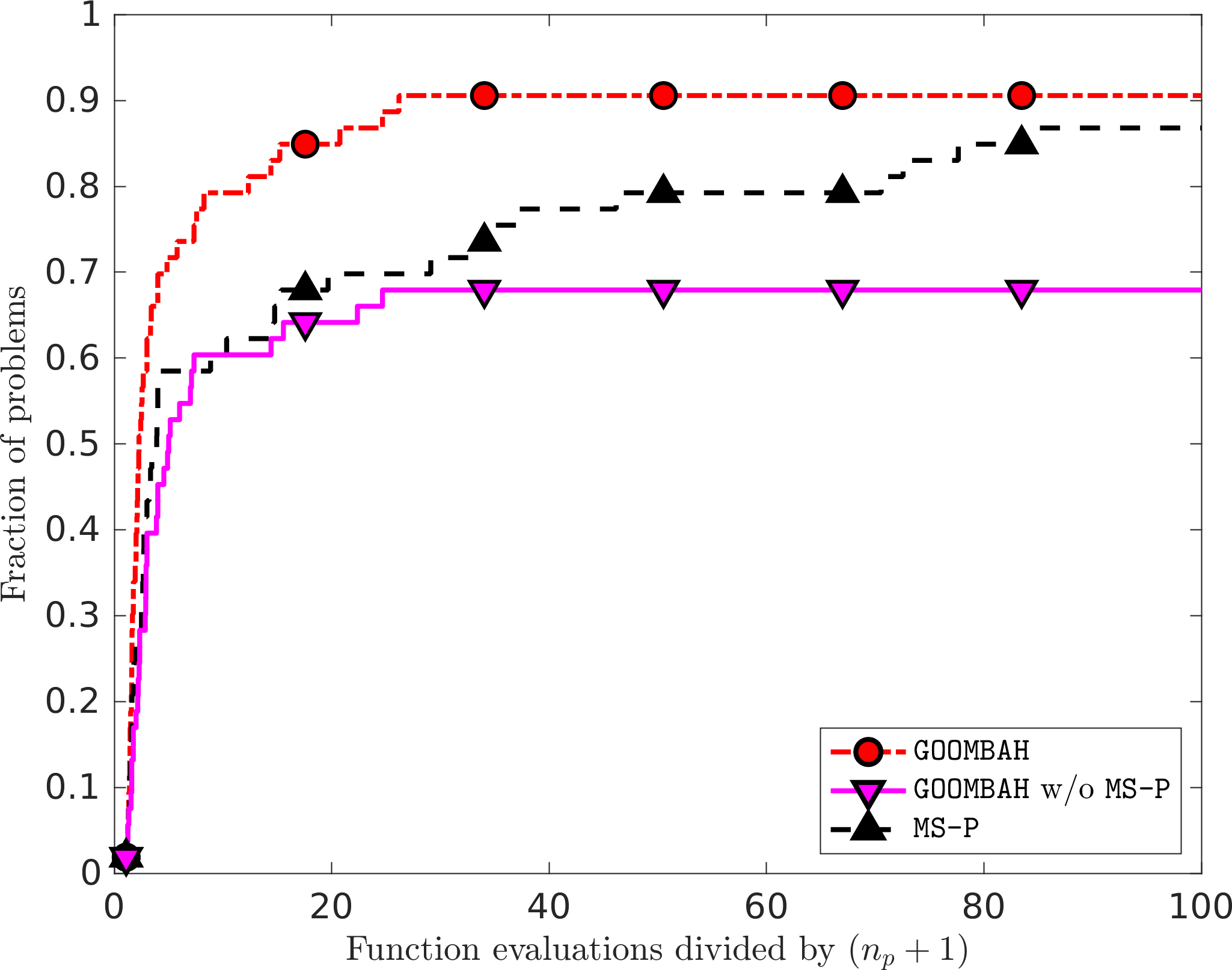}}
  \end{center}
  \caption{Data profiles on the pointwise-maximum-squared function $h_2$, constrained and unconstrained, for three values of $\tau$.}
\end{figure}

\begin{figure}[h]
  \begin{center}
    \subfigure[][$\tau = 10^{-1}$, unconstrained]{\includegraphics[width=0.45\linewidth]{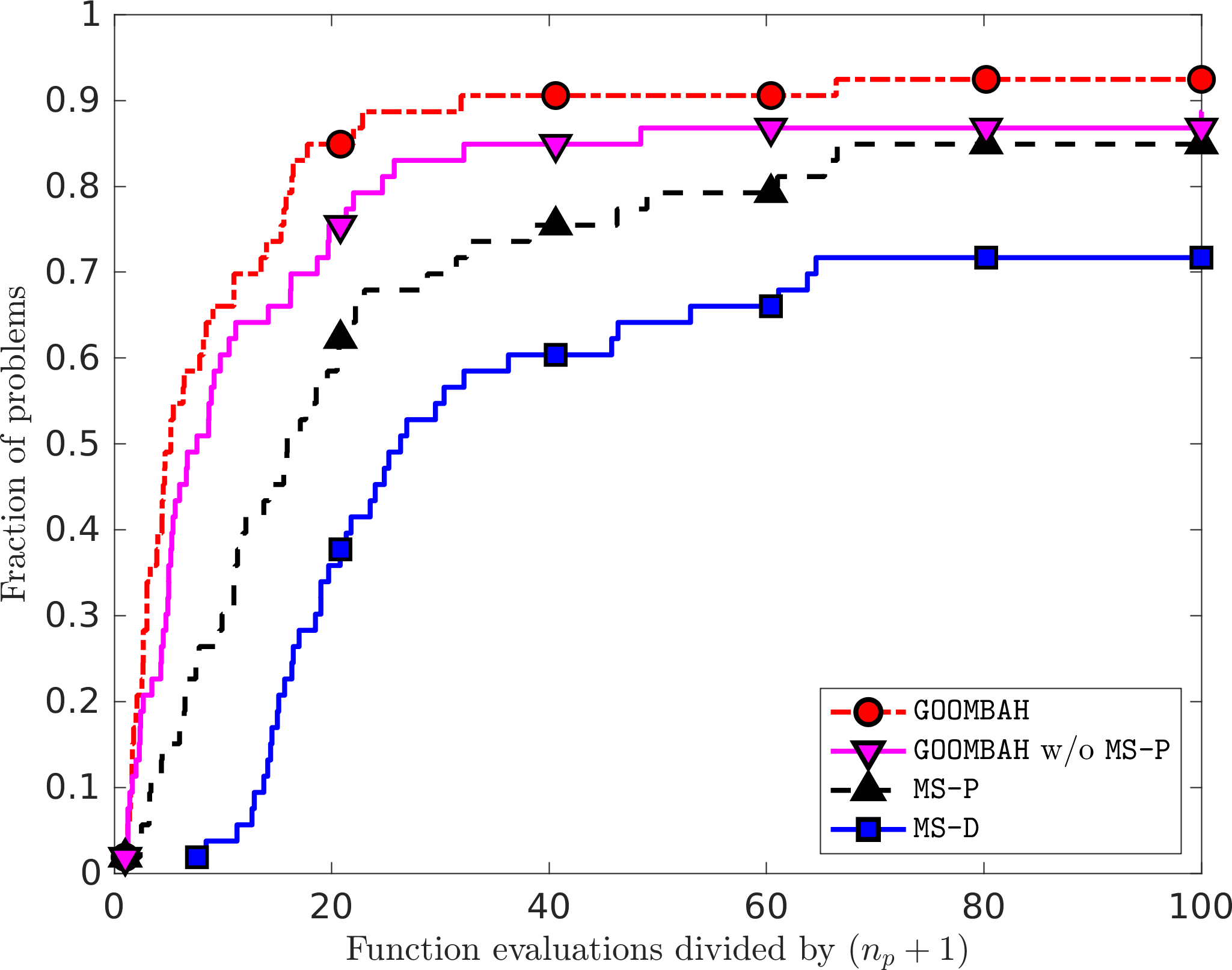}}
    \hfil
    \subfigure[][$\tau = 10^{-1}$, constrained]{\includegraphics[width=0.45\linewidth]{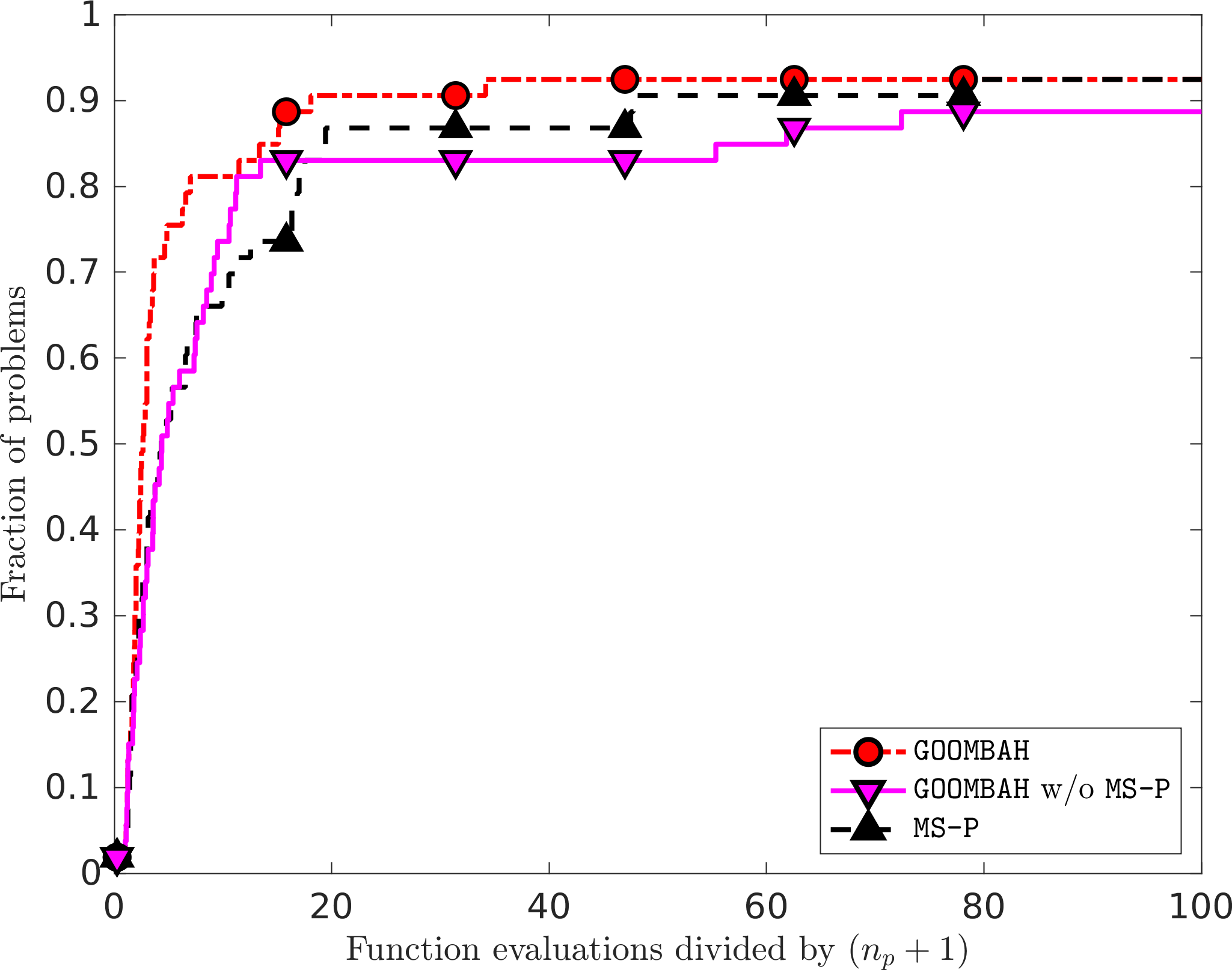}}\\[10pt]
    \subfigure[][$\tau = 10^{-3}$, unconstrained]{\includegraphics[width=0.45\linewidth]{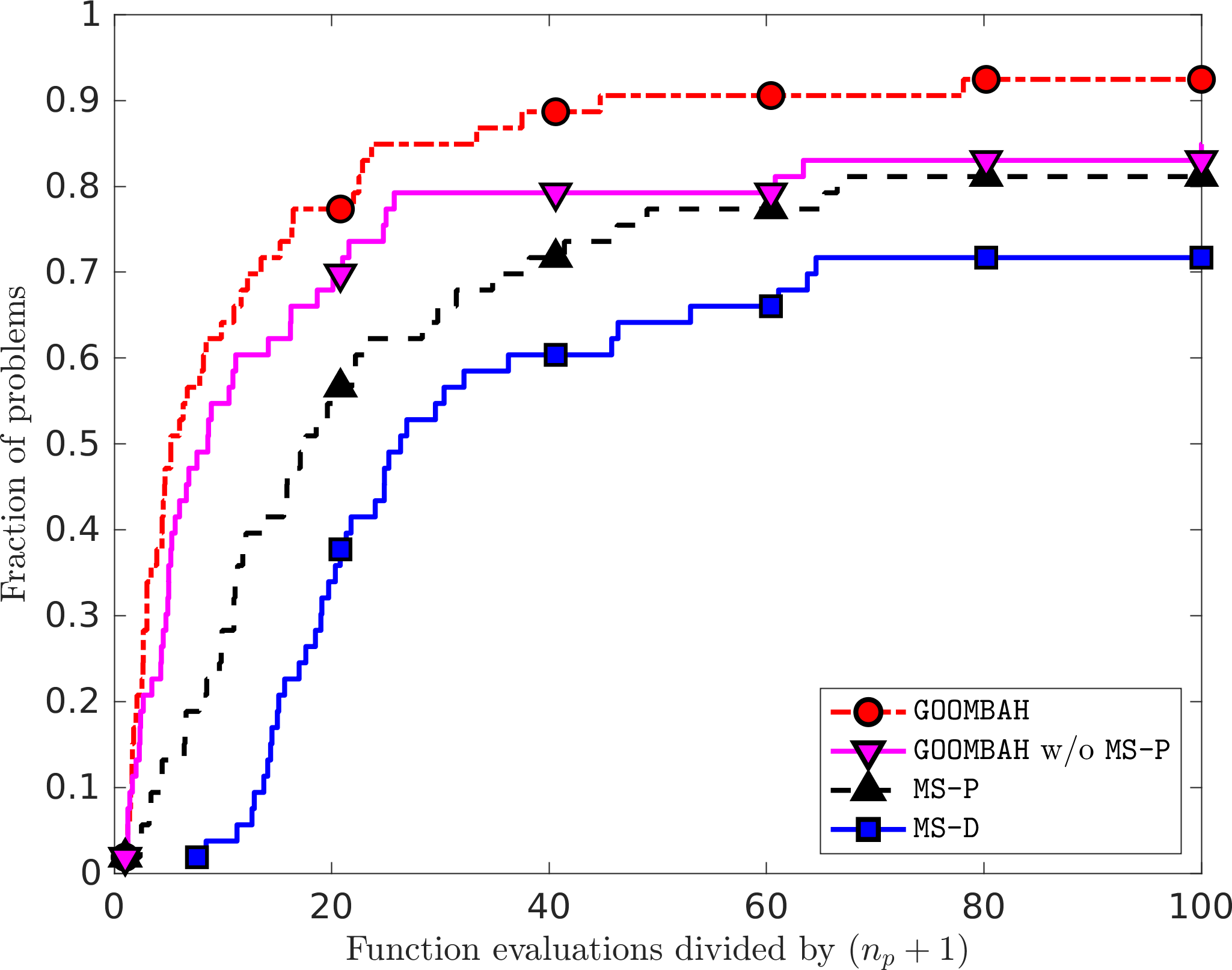}}
    \hfil
    \subfigure[][$\tau = 10^{-3}$, constrained]{\includegraphics[width=0.45\linewidth]{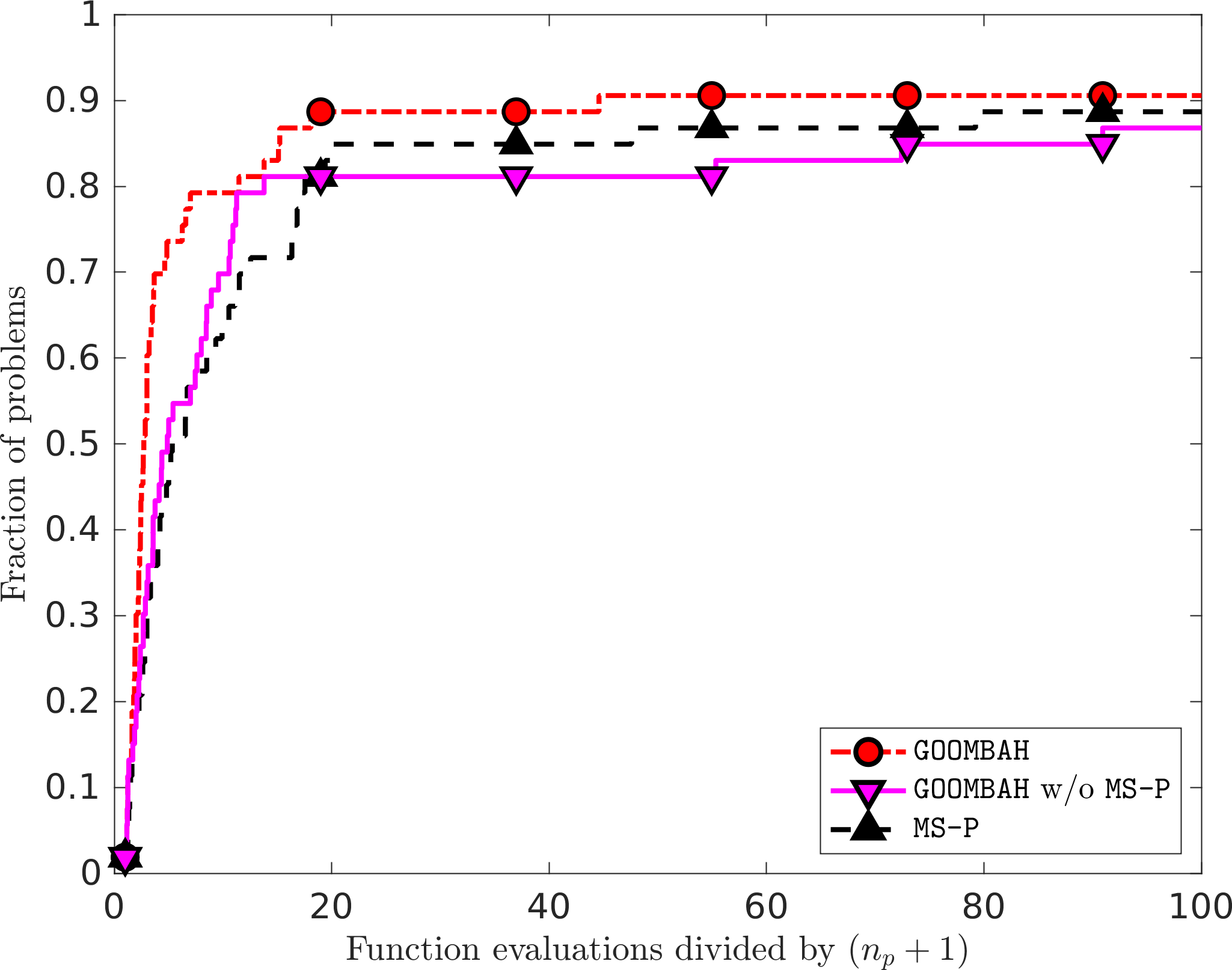}}\\[10pt]
    \subfigure[][$\tau = 10^{-5}$, unconstrained]{\includegraphics[width=0.45\linewidth]{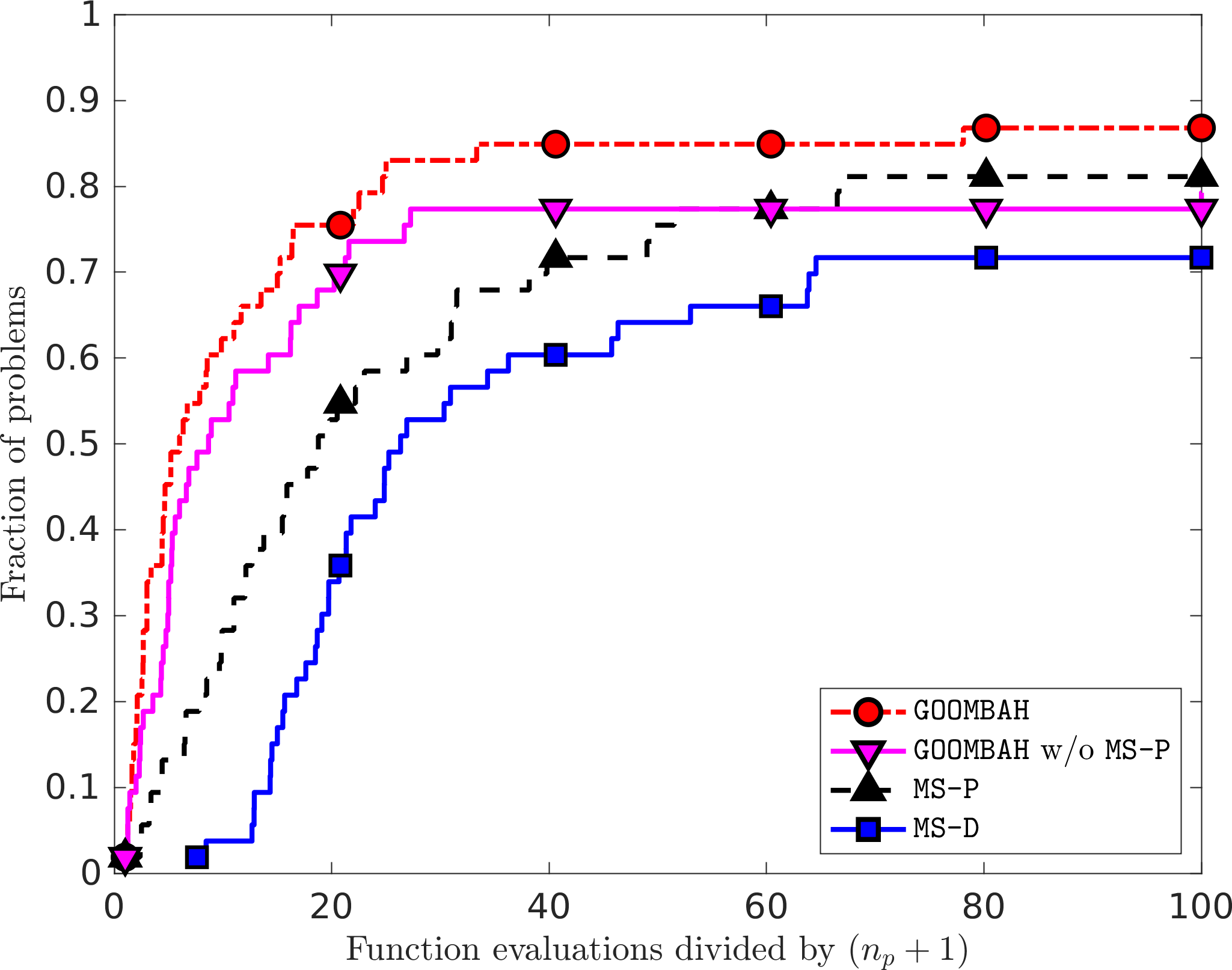}}
    \hfil
    \subfigure[][$\tau = 10^{-5}$, constrained]{\includegraphics[width=0.45\linewidth]{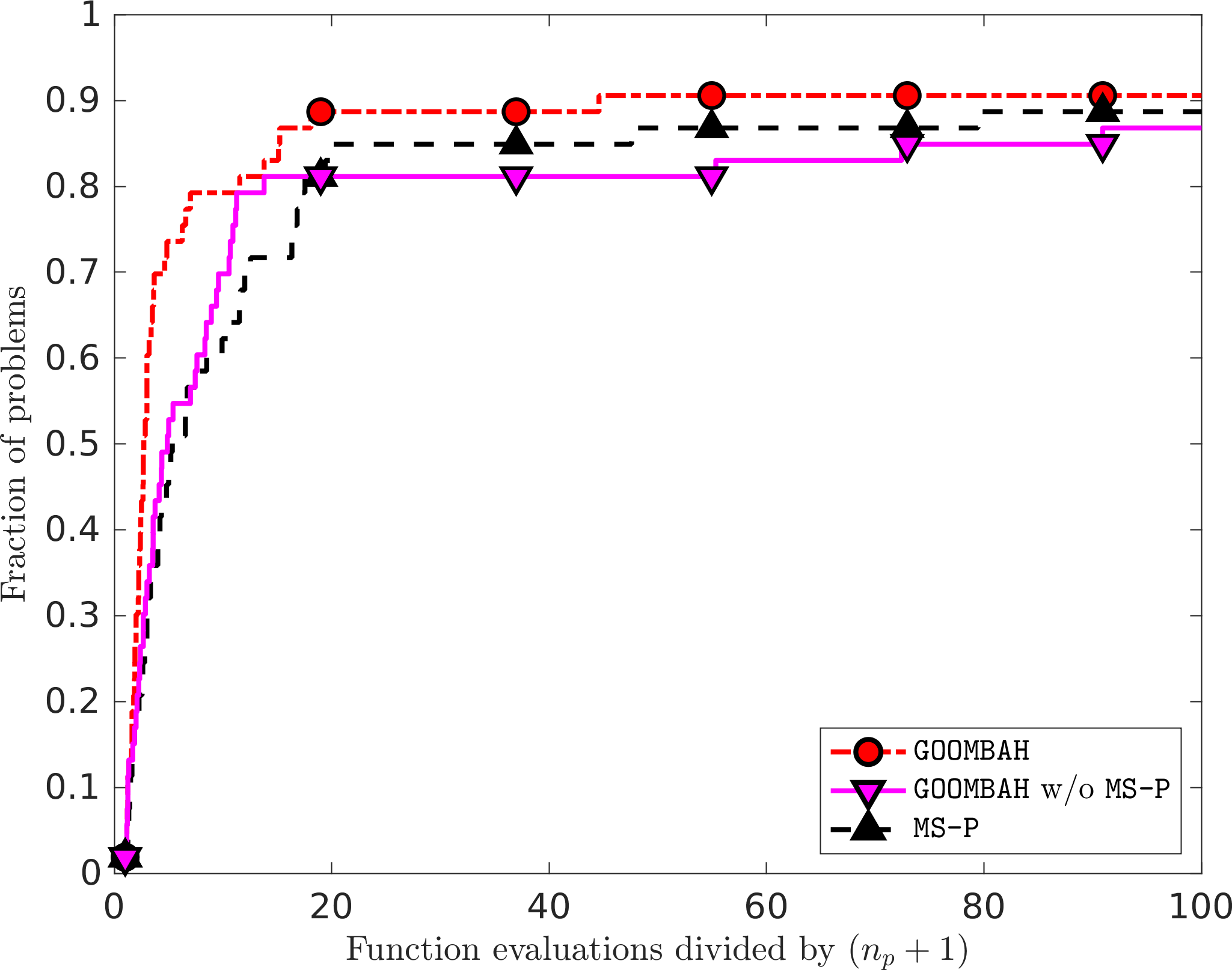}}
  \end{center}
  \caption{Data profiles on the censored-L1-loss function $h_3$, constrained and unconstrained, for three values of $\tau$.}
\end{figure}

\begin{figure}[h]
  \begin{center}
    \subfigure[][$\tau = 10^{-1}$, unconstrained]{\includegraphics[width=0.45\linewidth]{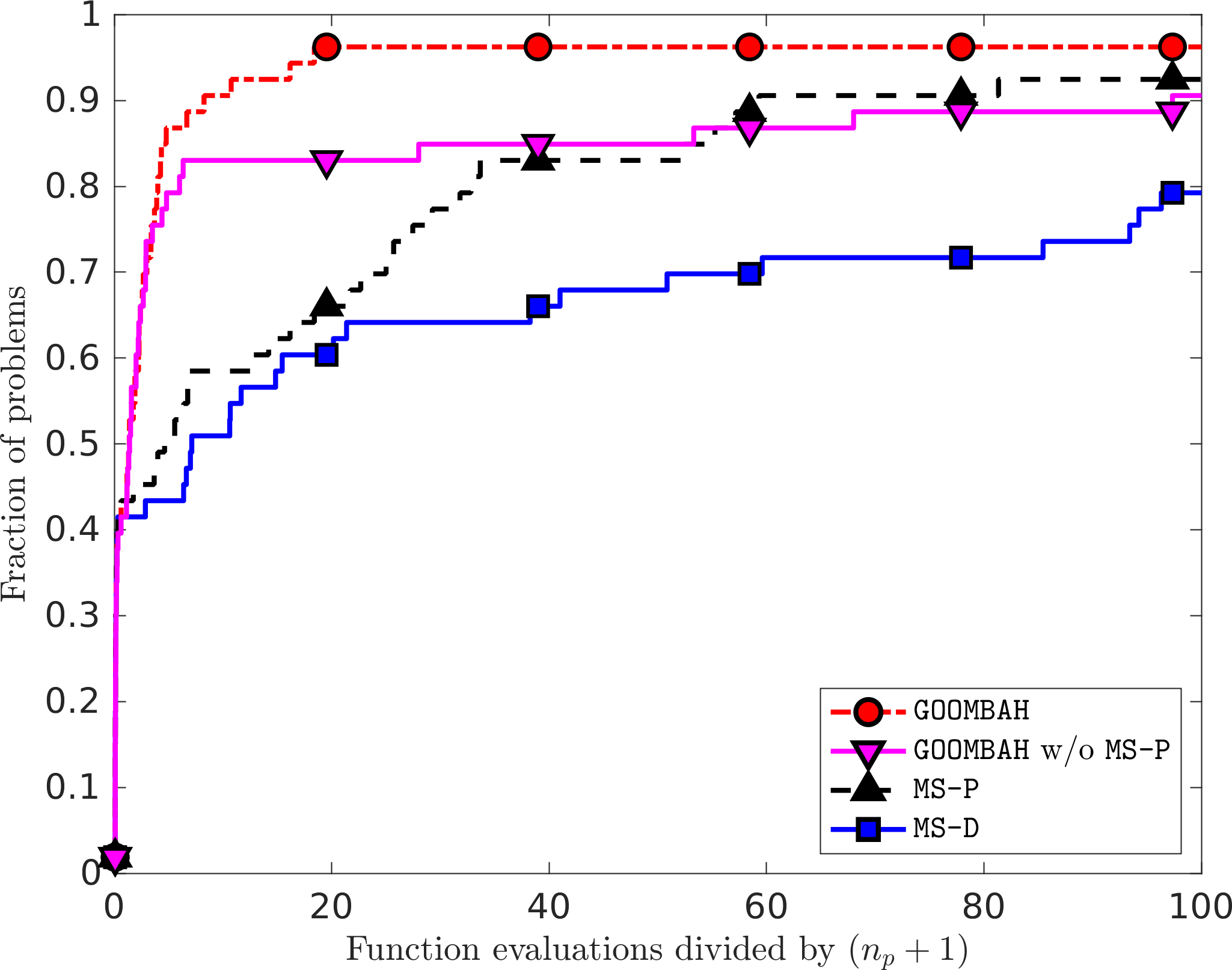}}
    \hfil
    \subfigure[][$\tau = 10^{-1}$, constrained]{\includegraphics[width=0.45\linewidth]{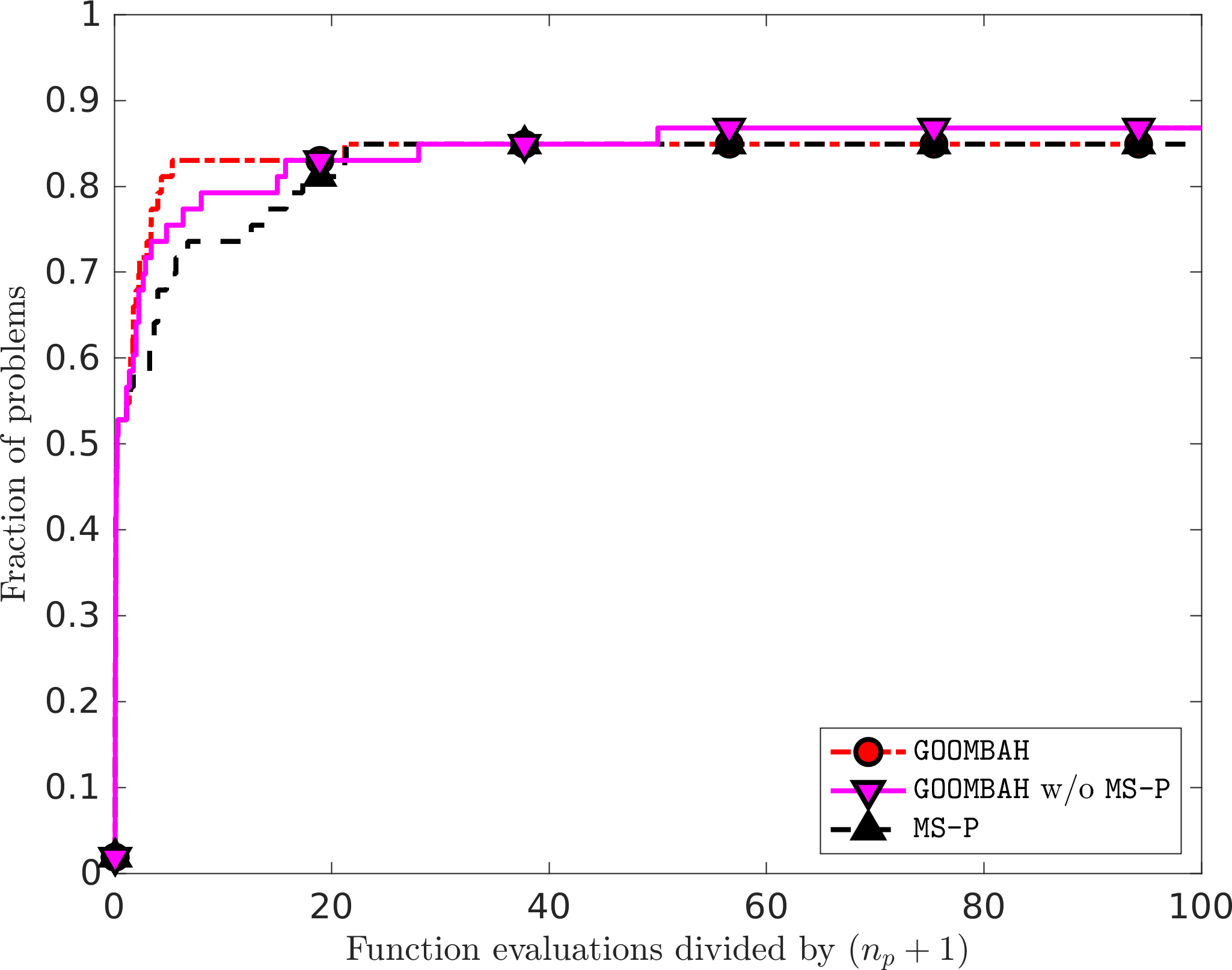}}\\[10pt]
    \subfigure[][$\tau = 10^{-3}$, unconstrained]{\includegraphics[width=0.45\linewidth]{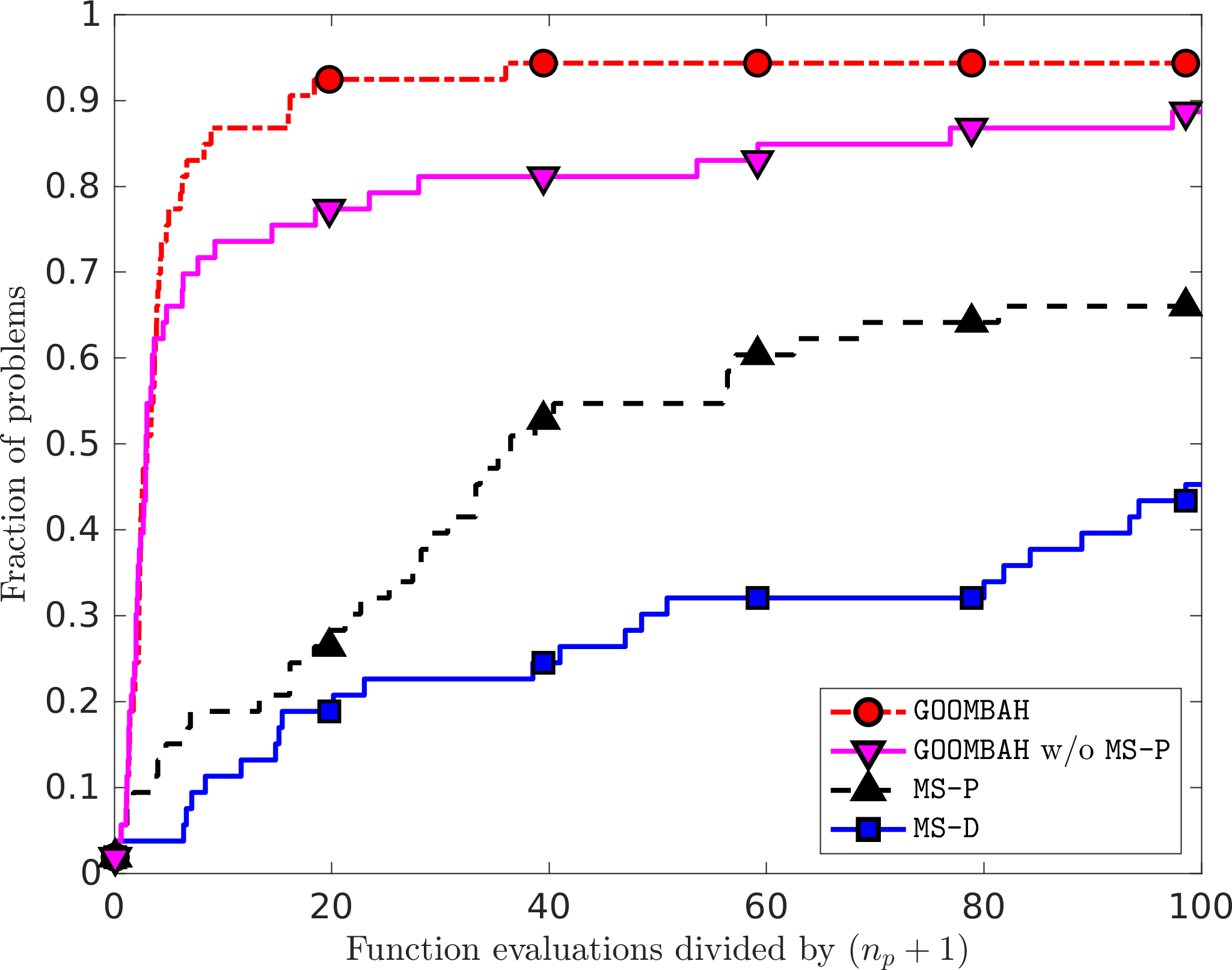}}
    \hfil
    \subfigure[][$\tau = 10^{-3}$, constrained]{\includegraphics[width=0.45\linewidth]{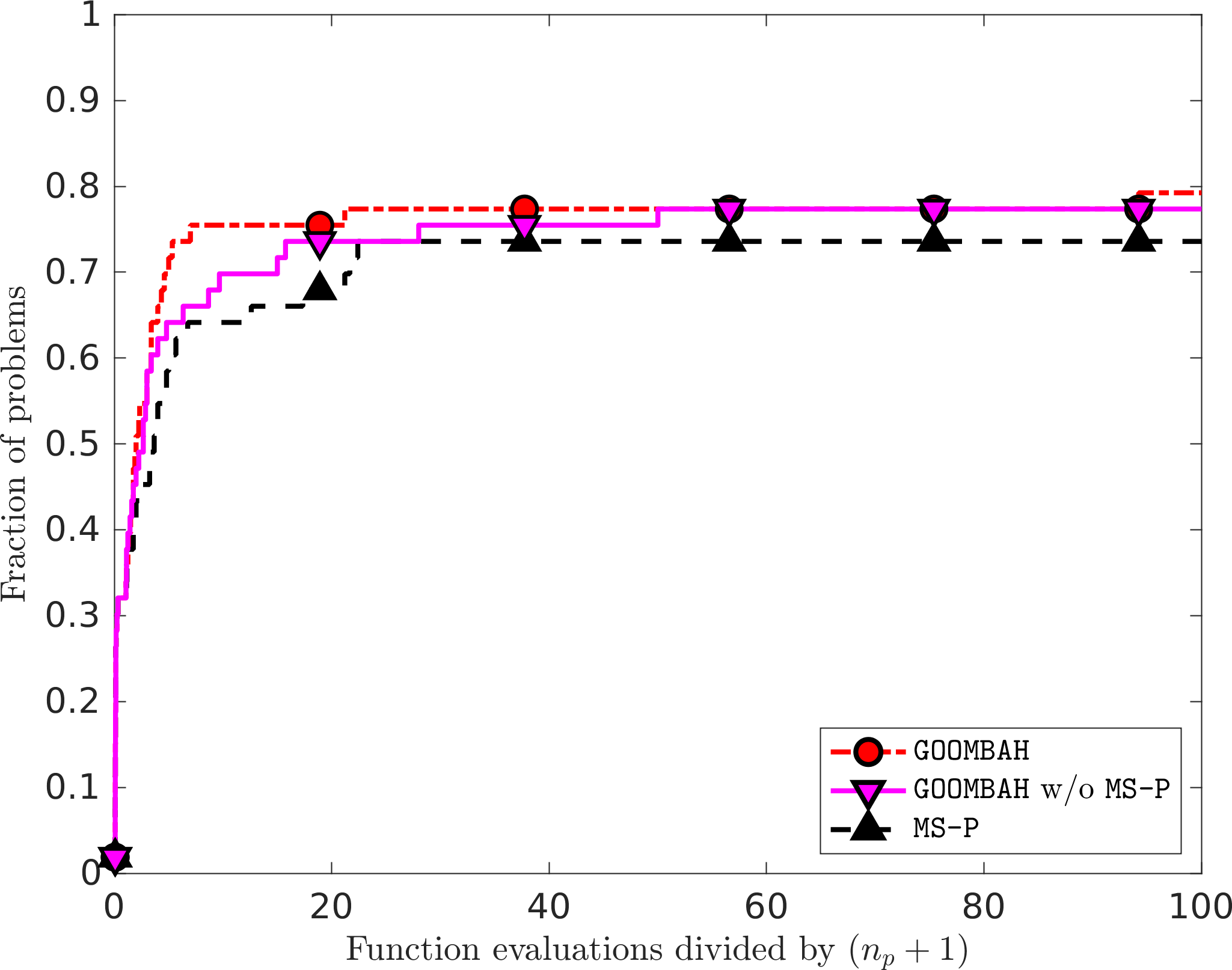}}\\[10pt]
    \subfigure[][$\tau = 10^{-5}$, unconstrained]{\includegraphics[width=0.45\linewidth]{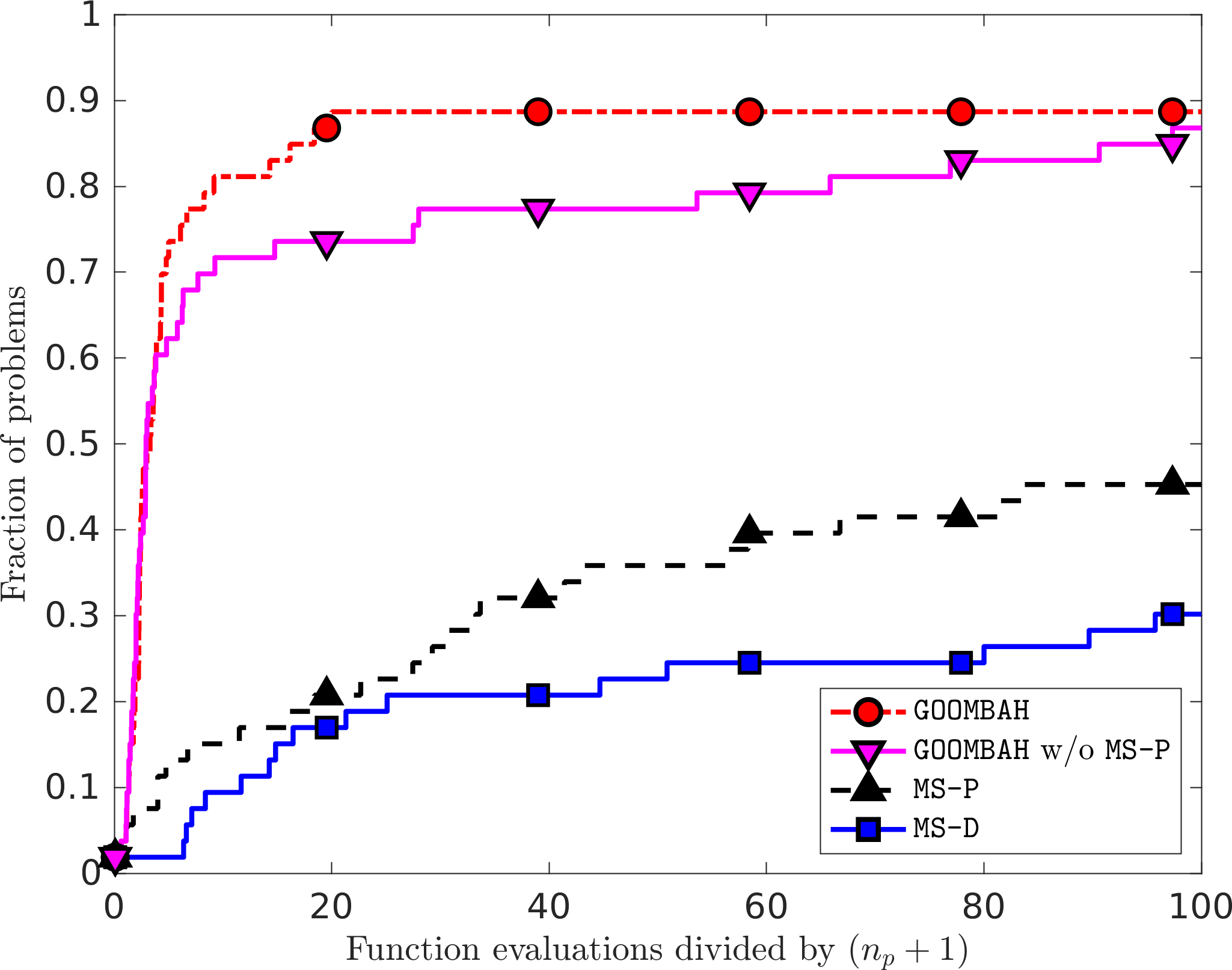}}
    \hfil
    \subfigure[][$\tau = 10^{-5}$, constrained]{\includegraphics[width=0.45\linewidth]{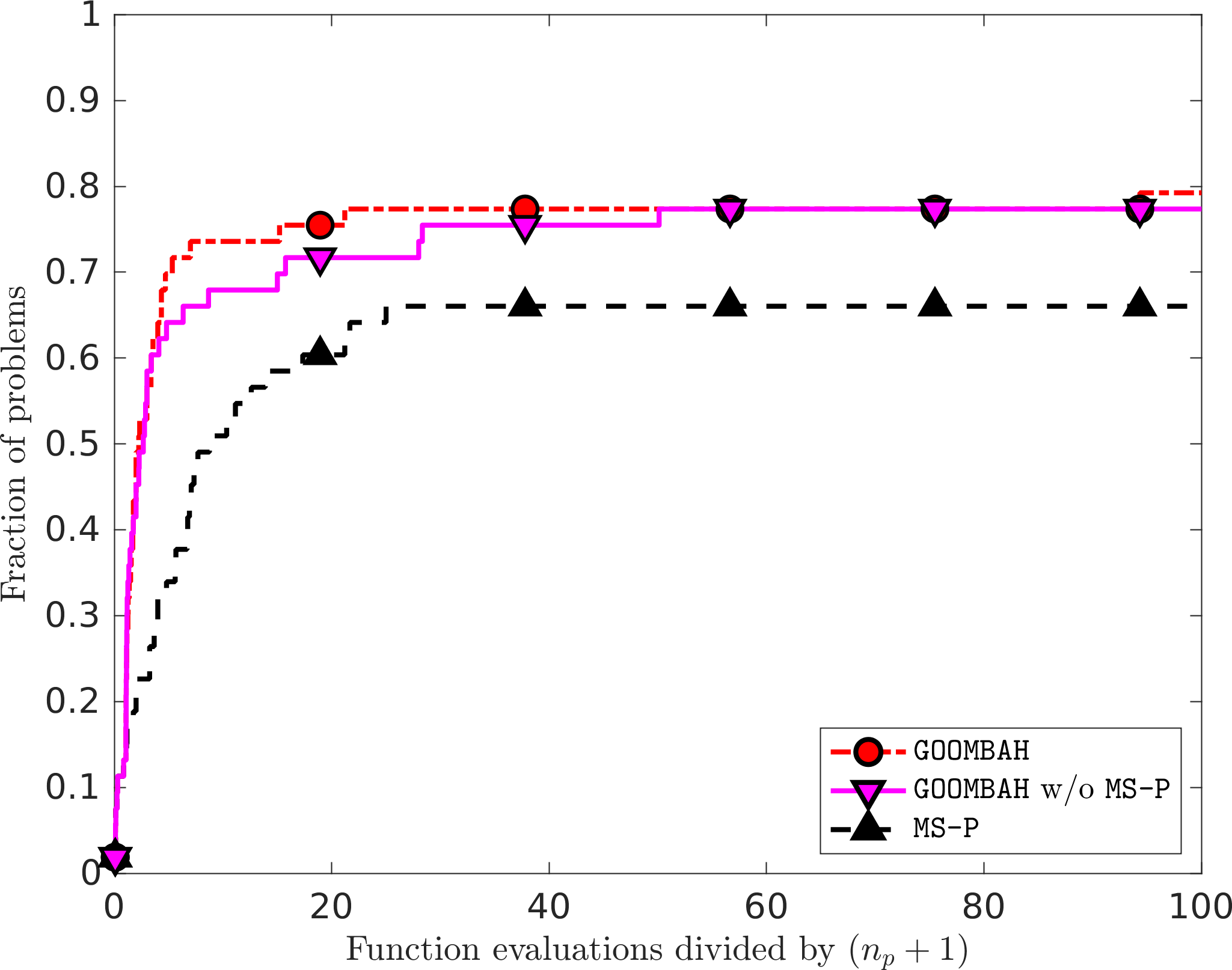}}
  \end{center}
  \caption{Data profiles on the piecewise-quadratic function $h_4$, constrained and unconstrained, for three values of $\tau$.}
\end{figure}

\vfill
\framebox{\parbox{.90\linewidth}{\scriptsize The submitted manuscript has been created by
        UChicago Argonne, LLC, Operator of Argonne National Laboratory (``Argonne'').
        Argonne, a U.S.\ Department of Energy Office of Science laboratory, is operated
        under Contract No.\ DE-AC02-06CH11357.  The U.S.\ Government retains for itself,
        and others acting on its behalf, a paid-up nonexclusive, irrevocable worldwide
        license in said article to reproduce, prepare derivative works, distribute
        copies to the public, and perform publicly and display publicly, by or on
        behalf of the Government.  The Department of Energy will provide public access
        to these results of federally sponsored research in accordance with the DOE
        Public Access Plan \url{http://energy.gov/downloads/doe-public-access-plan}.}}
\end{document}